\newcommand{\R}{\mathbb{R}}
\newcommand{\Z}{\mathbb{Z}}
\newcommand{\bigO}{\mathcal{O}}
\newcommand{\diff}{\mathrm{d}}
\newtheorem{theorem}{Theorem}[section]
\newtheorem{lemma}[theorem]{Lemma}
\newtheorem{proposition}[theorem]{Proposition}
\newtheorem*{main-theorem}{Main Theorem}
\newtheorem*{remark*}{Remark}
\newtheorem*{lemma*}{Lemma A.1}
\numberwithin{equation}{section}
\begin{document}

\title[Long time behavior of  the fractional Korteweg-de Vries]{ Long time behavior of  the fractional Korteweg-de Vries equation with cubic nonlinearity}

\author{Jean-Claude Saut}

\author{Yuexun Wang}

\address{ Universit\' e Paris-Saclay, CNRS, Laboratoire de Math\'  ematiques d'Orsay, 91405 Orsay, France.}
\email{jean-claude.saut@universite-paris-saclay.fr}

\address{	
	School of Mathematics and Statistics, 
	Lanzhou University, 370000 Lanzhou, China.}
\address{Universit\' e Paris-Saclay, CNRS, Laboratoire de Math\'  ematiques d'Orsay, 91405 Orsay, France.}
\email{yuexun.wang@universite-paris-saclay.fr}

\thanks{}

\subjclass[2010]{76B15, 76B03, 	35S30, 35A20}
\keywords{global existence, modified scattering, small data}

\begin{abstract} We prove global existence and modified scattering for the solutions of the Cauchy problem to the fractional Korteweg-de Vries equation with cubic nonlinearity for small, smooth and localized initial data. 
\end{abstract}
\maketitle

\section{Introduction}
We consider the fractional Korteweg-de Vries equation with cubic nonlinearity:
\begin{align}\label{eq:main}
	\partial_t u-|D|^{\alpha} \partial_x u=-u^2\partial_x u, \quad -1<\alpha<0,
\end{align}
where \(u\) maps \(\R_t\times\R_x\) to \(\R\) and \(|D|^{\alpha}\) is the usual Fourier multiplier operator with the symbol \(|\xi|^\alpha\).  The initial data is given by
\begin{align}\label{eq:initial}
	u(0,x)=u_0(x).
\end{align}

Although it does not seem to appear in a physical context, this equation, which will be from now on referred to as the modified fractional KdV equation (modified fKdV),  is a good toy model to understand the influence of a weak dispersion on the dynamics of a scalar conservation law such as the modified Burgers equation. When $\alpha=-\frac{1}{2},$ it is reminiscent for large frequencies, of a modified Whitham equation 

\begin{equation}\label {modWhit}
	\partial_t u+\mathcal L \partial_xu=-u^2\partial_xu,
\end{equation}
where the Fourier multiplier operator $\mathcal L$ with symbol $\left(\frac{\tanh \xi}{\xi}\right)^{1/2}.$  

We refer for instance to \cite{KLPS} for various issues and results on the usual (quadratic) Whitham equation, in particular on its KdV, long wave limit. It is straightforward to check that Theorem 1 in \cite{KLPS} extends to the modified Whitham equation, proving its long wave limit to the modified KdV equation.
More precisely we consider the rescaled modified Whitham equation
\begin{equation}\label{mWhit}
	u_t+\mathcal L_\epsilon u_x+\epsilon u^2u_x=0,
\end{equation}
where the non local operator $\mathcal L_\epsilon$ is related to the dispersion relation of the (linearized) water waves system and is defined by
$$\mathcal L_\epsilon =l(\sqrt{\epsilon}D):=\left(\frac{\tanh \sqrt \epsilon |D|}{\sqrt \epsilon |D|}\right)^{1/2} \quad \text{and} \quad D=-i\nabla=-i\frac{\partial}{\partial x}, $$
that we want to compare to the modified KdV equation

\begin{equation}\label{mKdV}
	v_t+v_x+\epsilon v^2v_x+\epsilon v_{xxx}=0.
\end{equation}
It is obvious that for any initial data $\phi\in H^s(\R), s>\frac{3}{2}$ \eqref {mWhit} admits a unique solution u in 
$C([0,T_\epsilon);H^s(\R))$ where $T_\epsilon = O(1/\epsilon).$
Denoting $v$ the solution of\eqref{mKdV} with the same initial data $\phi$, one obtains, proceeding as in the proof of Theorem 1 in \cite{KLPS} which considered the Whitham equation \eqref{Whit}:

\begin{theorem}
	Let $\phi \in H^{\infty}(\mathbb R)$. Then, for all $j \in \mathbb N$, $j \ge 0$, there exists
	$M_j=M_j(\|\phi\|_{H^{j+8}})>0$  such that
	\begin{equation} \label{maintheo.1}
		\|(u-v)(t)\|_{H^j_x} \le M_j \epsilon^2t,
	\end{equation}
	for all $0 \le t \lesssim \epsilon^{-1}$.
\end{theorem}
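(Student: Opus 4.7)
The plan is to follow the consistency/energy estimate scheme of Theorem~1 in \cite{KLPS}, adapting it to the cubic nonlinearity. The four ingredients are (i) uniform-in-$\epsilon$ Sobolev bounds for $u$ and $v$ on the time scale $1/\epsilon$, (ii) a symbol expansion of $\mathcal L_\epsilon$ that identifies the KdV operator as its second-order Taylor polynomial, (iii) an energy estimate for the difference, and (iv) a Gronwall argument.

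First I would note that, because $l(\sqrt\epsilon\xi)$ is real, even, and bounded by $1$, the linear part $\mathcal L_\epsilon\partial_x$ is skew-adjoint and commutes with $\partial_x^j$; the same is true for $\partial_x+\epsilon\partial_x^3$. Thus, standard $H^s$ energy estimates applied to \eqref{mWhit} and \eqref{mKdV} together with $\phi\in H^\infty(\R)$ give, by a Gronwall argument on the cubic term, bounds of the form
\begin{equation*}
  \sup_{0\le t\lesssim\epsilon^{-1}} \bigl(\|u(t)\|_{H^s}+\|v(t)\|_{H^s}\bigr) \le C_s(\|\phi\|_{H^s})
\end{equation*}
for every $s>3/2$, uniformly in $\epsilon\in(0,1]$. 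This is the analogue of the uniform existence interval in \cite{KLPS}.

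Next I would expand
\begin{equation*}
  l(\sqrt\epsilon\xi)=1-\tfrac{\epsilon\xi^2}{6}+r(\epsilon,\xi),
\end{equation*}
and verify, by splitting into $|\sqrt\epsilon\xi|\le 1$ (Taylor remainder) and $|\sqrt\epsilon\xi|\ge 1$ (crude bound $|l|\le 1$ plus $(\epsilon\xi^2)\le(\epsilon\xi^2)^2$), the pointwise estimate $|r(\epsilon,\xi)|\lesssim \epsilon^2\xi^4$ valid for all $\xi\in\R$. Consequently the remainder operator $R_\epsilon$ with symbol $r$ satisfies $\|R_\epsilon\partial_x f\|_{H^j}\lesssim \epsilon^2\|f\|_{H^{j+5}}$. (Modulo a harmless rescaling of $v$ that absorbs the $1/6$, one may assume the KdV dispersive coefficient matches this expansion; otherwise the error inherits the same $\epsilon^2$ bound.)

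Setting $w=u-v$, subtract \eqref{mKdV} from \eqref{mWhit} and factorise the cubic difference as $u^2u_x-v^2v_x=\frac{1}{3}\partial_x\bigl((u^2+uv+v^2)w\bigr)$. This yields
\begin{equation*}
  w_t+\mathcal L_\epsilon w_x+\tfrac{\epsilon}{3}\partial_x\bigl((u^2+uv+v^2)w\bigr)= R_\epsilon\partial_x v.
\end{equation*}
I would then apply $\partial_x^j$, pair with $\partial_x^j w$ in $L^2$, and use skew-adjointness of $\mathcal L_\epsilon\partial_x$ to kill the linear term. The nonlinear contribution is controlled by commutator (Kato--Ponce) estimates and the uniform bounds above, giving
\begin{equation*}
  \tfrac{d}{dt}\|w\|_{H^j}^2 \lesssim \epsilon\, C_j(\|\phi\|_{H^{j+2}})\,\|w\|_{H^j}^2 + \epsilon^2\,C_j(\|\phi\|_{H^{j+8}})\,\|w\|_{H^j}.
\end{equation*}
Starting from $w(0)=0$, Gronwall's inequality on $[0,T\epsilon^{-1}]$ produces the desired bound $\|w(t)\|_{H^j}\le M_j\epsilon^2 t$.

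The main obstacle is really only the uniform control of high Sobolev norms of $u$ and $v$ on the long time interval $[0,\epsilon^{-1}]$; once this is in place, the consistency error $R_\epsilon\partial_x v$ is of size $\epsilon^2$ by the symbol analysis, and the rest is a standard energy estimate. The extra derivatives needed (up to $H^{j+8}$) come from combining the five derivatives lost in estimating $R_\epsilon\partial_x v$ with those consumed by the Kato--Ponce commutator estimates, exactly as in the quadratic Whitham analysis of \cite{KLPS}.
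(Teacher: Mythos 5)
Your proposal is correct and takes exactly the route the paper intends: the paper gives no proof beyond invoking the argument of Theorem 1 in \cite{KLPS}, which is precisely this consistency-plus-energy-plus-Gronwall scheme (uniform $H^s$ bounds on $[0,\epsilon^{-1}]$, the symbol expansion $l(\sqrt\epsilon\xi)=1-\epsilon\xi^2/6+O(\epsilon^2\xi^4)$, the factorisation $u^2u_x-v^2v_x=\tfrac13\partial_x((u^2+uv+v^2)w)$, and Gronwall). Your parenthetical about the dispersive coefficient is well taken: with the coefficient $1$ written in \eqref{mKdV} the consistency error would only be $O(\epsilon)$, so the intended normalisation is indeed $\tfrac{\epsilon}{6}v_{xxx}$ (or a rescaled $v$), as in \cite{KLPS}.
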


It is well known that the modified Burgers equation undergoes shock formation, even for arbitrary small  smooth  initial data, provided  the square of the initial has a negative slope at some point. In particular no global solutions exist for arbitrary small smooth initial data in Sobolev spaces, such as a small gaussian.

The question we address here is whether this property is still true when adding a weakly dispersive term as in \eqref{eq:main}.  In fact we prove that adding this dispersive term allows the existence and (modified) scattering of small solutions.

Throughout the paper, we will always use the notation \(f(t)=e^{-t|D|^{\alpha} \partial_x}u(t)\) to denote the profile of \(u\). By time reversibility we  need only to consider the existence for positive time.

\vspace{0.3cm}
Our main result can be stated precisely as follows:
\begin{theorem}\label{th:main} Let \(\alpha\in(-1,0)\) and define the \(Z\)-norm 
\[\|g\|_Z=\|(1+|\xi|)^{10}\widehat{g}(\xi)\|_{L^\infty_\xi}.\] 
 Assume that \(N_0=100,\ p_0\in (0,1/1000]\cap (0,-\frac{\alpha}{100}] \) are fixed, and \(u_0\in H^{N_0}(\R)\) satisfies
\begin{align}\label{1}
\|u_0\|_{H^{N_0}}+\|u_0\|_{H^{1,1}}+\|u_0\|_Z=\varepsilon_0\leq \bar{\varepsilon},
\end{align}
for some constant \(\bar{\varepsilon}\) sufficiently small (depending only on \(\alpha\) and \(p_0\)). Then the Cauchy problem \eqref{eq:main}-\eqref{eq:initial} admits a unique
 global solution \(u\in C((\R):H^{N_0}(\R))\) satisfying the following uniform bounds for \(t\geq 1\)
\begin{align}\label{2}
t^{-p_0}\|u\|_{H^{N_0}}+t^{-p_0}\|f\|_{H^{1,1}}+\|f\|_Z\lesssim \varepsilon_0.
\end{align}
Moreover,  there exists \(w_\infty\in L^\infty(\R)\) such that  for \(t\geq 1\)
\begin{align}\label{3}
t^{p_0}\left\|\exp\left(\frac{3\mathrm{i}\xi|\xi|^{1-\alpha}}{\alpha(\alpha+1)}\int_1^t|\widehat{f}(s,\xi)|^2\frac{\diff s}{s}\right)(1+|\xi|)^{10}\widehat{f}(\xi)-w_\infty(\xi)\right\|_{L^\infty_\xi}\lesssim \varepsilon_0.
\end{align}
\end{theorem}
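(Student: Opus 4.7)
The plan is a standard bootstrap argument at the level of the profile $f(t) = e^{-t|D|^\alpha \partial_x} u(t)$, treating the three controlling quantities $\|u\|_{H^{N_0}}$, $\|f\|_{H^{1,1}}$, $\|f\|_Z$ simultaneously. Local well-posedness in $H^{N_0}$ is classical, so the task is to show that on any interval $[1,T]$ where the bootstrap hypothesis (the left side of \eqref{2} bounded by $C\varepsilon_0$, say $C = 2$) holds, one recovers the same bound with an improved constant. The key auxiliary step is to derive from $\|f\|_Z \lesssim \varepsilon_0$ and $\|f\|_{H^{1,1}}\lesssim \varepsilon_0 t^{p_0}$ a pointwise decay $\|u(t)\|_{L^\infty}\lesssim \varepsilon_0 t^{-1/2}$ (modulo $t^{p_0/2}$ losses), via a stationary phase argument on the oscillatory integral representing $u(t,x) = (2\pi)^{-1}\int e^{\I(x\xi + t\xi|\xi|^\alpha)}\widehat f(t,\xi)\,\diff\xi$, whose phase has nondegenerate critical points for $\alpha\in(-1,0)$.

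\textbf{Heart of the argument: the $Z$-norm.} Writing the Duhamel formulation for the profile on the Fourier side,
\begin{equation*}
\partial_t \widehat f(t,\xi) = -\tfrac{\I\xi}{3}\iint e^{\I t \Phi(\xi,\eta,\sigma)} \widehat f(t,\eta)\widehat f(t,\sigma)\widehat f(t,\xi-\eta-\sigma)\,\diff\eta\,\diff\sigma,
\end{equation*}
where $\Phi(\xi,\eta,\sigma) = -\xi|\xi|^\alpha + \eta|\eta|^\alpha + \sigma|\sigma|^\alpha + (\xi-\eta-\sigma)|\xi-\eta-\sigma|^\alpha$, one must identify the space-time resonant set $\{\nabla_{\eta,\sigma}\Phi = 0\} \cap \{\Phi = 0\}$. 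Direct computation shows this consists of the two points $(\eta,\sigma) = (\xi,-\xi)$ and $(\eta,\sigma) = (-\xi,\xi)$, where the cubic pairs as $|\widehat f(\xi)|^2 \widehat f(\xi)$ because $u$ is real. The Hessian of $\Phi$ at these points has determinant $-\alpha^2(\alpha+1)^2|\xi|^{2(\alpha-1)}$, which is nondegenerate for $\xi\neq 0$; stationary phase in two variables (with the $Z$-norm and $H^{1,1}$ bounds to control Schwartz tails and endpoint contributions) yields
\begin{equation*}
\partial_t \widehat f(t,\xi) = -\tfrac{3\I \xi|\xi|^{\alpha-1}}{\alpha(\alpha+1)\,t}|\widehat f(t,\xi)|^2 \widehat f(t,\xi) + R(t,\xi), \qquad \bigl\|(1+|\xi|)^{10}R(t,\xi)\bigr\|_{L^\infty_\xi}\lesssim \varepsilon_0^3\, t^{-1-\delta},
\end{equation*}
for some $\delta>0$ depending on $p_0$. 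The remainder $R$ comes from (i) nonstationary pieces, handled by $(\eta,\sigma)$-integration by parts, (ii) non-resonant stationary pieces (with $\Phi\neq 0$), handled by a time-normal-form/integration-by-parts-in-$t$, and (iii) low-frequency truncation, where the symbol $|\xi|^\alpha$ is singular and one pays with the $H^{1,1}$ weight. Multiplying the equation by the integrating factor $\exp\!\left(\tfrac{3\I\xi|\xi|^{1-\alpha}}{\alpha(\alpha+1)}\int_1^t|\widehat f(s,\xi)|^2 \tfrac{\diff s}{s}\right)$ kills the oscillatory main term and reduces the $Z$-norm and modified-scattering claims \eqref{3} to a Cauchy-in-time estimate on $R$.

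\textbf{Energy and weighted bounds.} Standard $L^2$-energy estimates on $\partial_x^k u$ for $k\le N_0$ use the $L^\infty$ decay above to obtain $\tfrac{d}{dt}\|u\|_{H^{N_0}}^2 \lesssim \|u\|_{L^\infty}^2 \|u\|_{H^{N_0}}^2 \lesssim \varepsilon_0^2 t^{-1}\|u\|_{H^{N_0}}^2$, which by Grönwall gives a $t^{p_0}$ bound once $\bar\varepsilon$ is small. For $\|f\|_{H^{1,1}}$, one introduces the vector field $L = x + (\alpha+1)t|D|^\alpha$ (which commutes with the linear part since $\partial_\xi$ commutes with multiplication by the symbol), differentiates the Duhamel formula in $\xi$, and controls the resulting cubic with a derivative by the same $L^\infty$ decay plus a Hölder/Bernstein splitting to absorb the low-frequency singularity of $|\xi|^\alpha$.

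\textbf{Main obstacle.} The heaviest technical work is in the $Z$-norm step, specifically handling (a) the low-frequency regime $|\xi|\ll 1$, where the symbol $\xi|\xi|^\alpha$ is only $C^{1+\alpha}$ and the Hessian of $\Phi$ is degenerate, and (b) the non-resonant stationary contributions, which require an $L^\infty$-level normal form transform that is compatible with the slow $t^{p_0}$ growth of $\|f\|_{H^{1,1}}$. Extracting the precise constant $3/[\alpha(\alpha+1)]$ in the modified phase (so that \eqref{3} holds with a sharp exponent) amounts to a careful bookkeeping of the two stationary-phase contributions and the symmetry factor $1/3$ coming from the cubic structure.
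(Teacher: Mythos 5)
Your proposal follows essentially the same route as the paper: a bootstrap on the triple norm, an $L^{\infty}$ decay estimate of order $t^{-1/2}$ deduced from the dispersive linear estimate together with the $Z$- and $H^{1,1}$-bounds, energy/vector-field estimates for $\|u\|_{H^{N_0}}$ and $\|f\|_{H^{1,1}}$, and a space-time resonance analysis of the cubic Duhamel term with the integrating factor $\exp\bigl(\tfrac{3\I\xi|\xi|^{1-\alpha}}{\alpha(\alpha+1)}\int_1^t|\widehat f(s,\xi)|^2\tfrac{\diff s}{s}\bigr)$ giving the $Z$-bound and modified scattering. The outline is correct in its architecture.

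Three points in your sketch need tightening to actually close. First, the $L^{\infty}$ decay must be proved \emph{without} the ``$t^{p_0/2}$ losses'' you allow yourself: if $\|u\|_{L^\infty}\|\partial_x u\|_{L^\infty}$ only decayed like $t^{-1+p_0}$, Gr\"onwall would give $\exp(C\varepsilon^2 t^{p_0}/p_0)$ growth of the energy and the bootstrap would not close; the paper's Lemma 2.2 obtains a clean $t^{-1/2}$ by playing the weighted $L^\infty_\xi$ bound (no time growth) against the $H^{1,1}$ bound (which carries the $t^{p_0}$ but comes with the better $t^{-3/4}$ linear decay). Second, the stationary set consists of \emph{three} points, $(\eta,\sigma)\in\{(\xi,-\xi),(-\xi,\xi),(\xi,\xi)\}$ (the last one putting $\xi-\eta-\sigma=-\xi$), not two; it is exactly these three contributions that produce the factor $3$ in the modified phase, so your bookkeeping of the constant via ``two stationary points and a symmetry factor $1/3$'' does not come out right. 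Third, the remainder $R$ is not bounded pointwise in $t$ by $\varepsilon_0^3 t^{-1-\delta}$: the non-space-resonant, time-non-resonant pieces are only controlled after integration by parts in $s$ over dyadic time blocks (they involve $\partial_s\widehat f$ and boundary terms), which is why the paper formulates the key estimate as a Cauchy-in-time bound \eqref{19}--\eqref{30} rather than a pointwise decay of $\partial_t g$. A genuinely different (and in the paper, simpler) choice you could adopt for the weighted norm is to work on the physical side with $\Lambda=\partial_x^{-1}((\alpha+1)t\partial_t+x\partial_x+1)$ and the identity $\mathcal{J}u=\Lambda u+(\alpha+1)tu^3/3$, which avoids differentiating the Duhamel formula in $\xi$ near the singular frequency $\xi=0$.
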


The local well-posedness on the time interval \([0,1]\) for \eqref{eq:main}-\eqref{eq:initial} is standard provided $||u_0||_{H^2}$ is small enough, in particular  under the smallness assumption \eqref{1}. Then the existence and uniqueness of global solutions may be constructed by a bootstrap argument which allows  to extend the local solutions. More precisely,
assume that the following \(X\)-norm is a priori small:
\begin{equation}\label{4}
	\begin{aligned}
		\|u\|_{X}&=\sup_{t\geq 1}\bigg(t^{-p_0}\|u\|_{H^{N_0}}+t^{-p_0}\|f\|_{H^{1,1}}+\|f\|_Z\bigg)\leq \varepsilon_1
	\end{aligned}
\end{equation}
with \(\varepsilon_1=\varepsilon_0^{1/3}\), 
we then aim to show that the above a priori assumption may be improved to
\begin{align}\label{5}
	\|u\|_{X}\leq C(\varepsilon_0+\varepsilon_1^3),
\end{align}
for some absolute constant \(C>1\). 

The litterature dealing with the problem of global existence and scattering of small solutions to nonlinear dispersive equations is vast and we refer to the Introduction of \cite{MR3121725} for a useful survey.

The present work is close to \cite{MR3961297,MR3552008,MR3121725} in methodology.
In comparison with \cite{MR3121725}, the presence of a derivative
on the nonlinearity in the equation \eqref{eq:main} plays a crucial role in our proof in the sense that on the one hand it eliminates part of resonances in low frequencies and on the other hand it avoids using \(\partial_s\widehat{f}_l\) in \(L^\infty\)-norm in some frequency regimes, which allows us to extend the estimates to all \(\alpha\in(-1,0)\).

To end this section, we list the notations frequently used throughout the paper.

We denote by \(\mathcal{F}(g)\) or \(\widehat{g}\) the Fourier transform of a Schwartz function \(g\) whose formula is given by 
\begin{equation*}
	\begin{aligned}
		\mathcal{F}(g)(\xi)=\widehat{g}(\xi):=\frac{1}{\sqrt{2\pi}}\int_{\R}g(x)e^{-\mathrm{i}x\xi}\,\diff x
	\end{aligned}
\end{equation*}
with inverse
\begin{equation*}
	\begin{aligned}
		\mathcal{F}^{-1}(g)(x)=\frac{1}{\sqrt{2\pi}}\int_{\R}g(\xi)e^{\mathrm{i}x\xi}\,\diff \xi,
	\end{aligned}
\end{equation*}
and by \(m(\partial_x)\) the Fourier multiplier with symbol \(m\) via the relation 
\begin{equation*}
	\begin{aligned}
		\mathcal{F}\big(m(\partial_x)g\big)(\xi)=m(\mathrm{i}\xi)\widehat{g}(\xi).
	\end{aligned}
\end{equation*}

Let \(\varphi\) be a smooth function satisfying
\begin{equation*}
	\varphi(\xi)=
	\left\{ 
	\begin{aligned}
		1,\quad &|\xi|\leq 1\\
		0,\quad &|\xi|>2.
	\end{aligned}
	\right.
\end{equation*}
Set 
\begin{equation*}
	\begin{aligned}
		\psi(\xi)=\varphi(\xi)-\varphi(2\xi),\quad \psi_j(\xi)=\psi(2^{-j}\xi),\quad \varphi_j(\xi)=\varphi(2^{-j}\xi),
	\end{aligned}
\end{equation*}
we then may define the  
Littlewood-Paley projections \(P_j,P_{\leq j},P_{> j}\)  via 
\begin{equation*}
	\begin{aligned}
		\widehat{P_jg}(\xi)=\psi_j(\xi)\widehat{g}(\xi),\quad \widehat{P_{\leq j}g}(\xi)=\varphi_j(\xi)\widehat{g}(\xi),\quad P_{> j}=1-P_{\leq j},
	\end{aligned}
\end{equation*}
and also \(P_{\sim},P_{\lesssim j},P_{\ll j}\) by 
\begin{equation*}
	\begin{aligned}
		P_{\sim j}=\sum_{2^k\sim 2^j}P_k, \quad P_{\lesssim j}=\sum_{2^k\leq 2^{j+C}}P_k,\quad P_{\ll j}=\sum_{2^k\ll 2^j}P_k,
	\end{aligned}
\end{equation*}
and the obvious notation for \(P_{[a,b]}\).
We will also denote \(g_j=P_jg, g_{\lesssim j}=P_{\lesssim j}g\), and so on, for convenience.

The notation \(C\)  always denotes a nonnegative universal constant which may be different from line to line but is
independent of the parameters involved. Otherwise, we will specify it by  the notation \(C(a,b,\dots)\).
We write \(g\lesssim h\) (\(g\gtrsim h\)) when \(g\leq  Ch\) (\(g\geq  Ch\)), and \(g \sim h\) when \(g \lesssim h \lesssim g\).
We also write \(\sqrt{1+x^2}=\langle x\rangle\) for \(x\in\R\) and \(\|g\|_{H^{1,1}}=\|\langle x\rangle g\|_{H^1}\), and \(P_{[k-2,k+2]}:=P_k^\prime\) and for simplicity.

\section{Decay Estimates}
This section is devoted to presenting some decay estimates of the solution of the equation \eqref{eq:main}. We first recall a dispersive linear estimate from \cite[Lemma 2.3]{MR3961297}:
\begin{lemma} For any \(t\geq 1\),  the following linear dispersive estimates hold:
	\begin{align}\label{6}
	\left\|e^{t|D|^{\alpha}\partial_x}P_kg\right\|_{L^\infty}
\lesssim t^{-\frac{1}{2}}2^{\frac{1-\alpha}{2}k}\|\widehat{g}\|_{L^\infty}
+t^{-\frac{3}{4}}2^{-\frac{1+3\alpha}{4}k}(\|\widehat{g}\|_{L^2}+2^k\|\partial\widehat{g}\|_{L^2}),
	\end{align}
and
	\begin{align}\label{6.5}
	\left\|e^{t|D|^{\alpha}\partial_x}P_kg\right\|_{L^\infty}
	\lesssim t^{-\frac{1}{2}}2^{\frac{1-\alpha}{2}k}\|g\|_{L^1},
	\end{align}
for \(\alpha\in (-1,1)\setminus\{0\}\).
\end{lemma}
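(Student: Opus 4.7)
The plan is to reduce both estimates to oscillatory integral bounds for the phase $\phi(\xi)=x\xi+t\xi|\xi|^\alpha$, with $\phi'(\xi)=x+(1+\alpha)t|\xi|^\alpha\,\mathrm{sgn}(\xi)$ and $\phi''(\xi)=\alpha(1+\alpha)t|\xi|^{\alpha-1}\,\mathrm{sgn}(\xi)$. Since $\alpha(1+\alpha)\neq 0$ for $\alpha\in(-1,1)\setminus\{0\}$, on $\mathrm{supp}\,\psi_k$ one has $|\phi''(\xi)|\sim t\cdot 2^{(\alpha-1)k}$ uniformly. This non-degeneracy is what drives both bounds. For \eqref{6.5}, the operator is convolution with the kernel $K_{t,k}(x)=\frac{1}{2\pi}\int e^{\mathrm{i}\phi(\xi)}\psi_k(\xi)\,\diff\xi$; van der Corput's lemma (second-derivative form) gives $\|K_{t,k}\|_{L^\infty}\lesssim|\phi''|^{-1/2}\lesssim t^{-1/2}\,2^{(1-\alpha)k/2}$ uniformly in $x$, and Young's inequality then yields \eqref{6.5} immediately.

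For \eqref{6}, I would write $\bigl(e^{t|D|^{\alpha}\partial_x}P_kg\bigr)(x)=\frac{1}{\sqrt{2\pi}}\int e^{\mathrm{i}\phi(\xi)}\psi_k(\xi)\widehat g(\xi)\,\diff\xi$ and distinguish according to whether $\phi'$ vanishes on $\mathrm{supp}\,\psi_k$. The critical-point equation forces $|x|\sim t\cdot 2^{\alpha k}$, with the critical point $\xi_0$ satisfying $|\xi_0|\sim 2^k$. Outside this $x$-range, $|\phi'(\xi)|\gtrsim |x|+t\cdot 2^{\alpha k}$ throughout the support, and repeated integration by parts produces decay that easily dominates the stated bound. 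In the stationary range, decompose $\widehat g(\xi)=\widehat g(\xi_0)+[\widehat g(\xi)-\widehat g(\xi_0)]$; the constant piece contributes $\widehat g(\xi_0)\,K_{t,k}(x)$, which is bounded by $t^{-1/2}\,2^{(1-\alpha)k/2}\|\widehat g\|_{L^\infty}$ via the first paragraph and gives exactly the first term of \eqref{6}.

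The hard part is the remainder $R(x)=\int e^{\mathrm{i}\phi(\xi)}\psi_k(\xi)[\widehat g(\xi)-\widehat g(\xi_0)]\,\diff\xi$, which must be controlled using only the $H^1$-type information on $\widehat g$. I would split at a scale $\delta$ to be optimized. On $|\xi-\xi_0|<\delta$ the Hölder-type bound $|\widehat g(\xi)-\widehat g(\xi_0)|\leq|\xi-\xi_0|^{1/2}\|\partial\widehat g\|_{L^2}$ produces a contribution of size $\delta^{3/2}\|\partial\widehat g\|_{L^2}$. On the complement one has $|\phi'(\xi)|\gtrsim t\cdot 2^{(\alpha-1)k}|\xi-\xi_0|$, and a single integration by parts generates boundary terms together with bulk terms involving $\partial\widehat g$, $\psi_k'$, and $\phi''/(\phi')^2$; each of these is controlled by $\|\widehat g\|_{L^2}+2^k\|\partial\widehat g\|_{L^2}$. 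Balancing by choosing $\delta\sim t^{-1/2}\,2^{(1-\alpha)k/2}$ gives $\delta^{3/2}=t^{-3/4}\,2^{3(1-\alpha)k/4}$, and the identity $3(1-\alpha)/4=1-(1+3\alpha)/4$ rewrites this as $2^k\cdot t^{-3/4}\,2^{-(1+3\alpha)k/4}$, matching the stated coefficient on $\|\partial\widehat g\|_{L^2}$; the $\|\widehat g\|_{L^2}$-contribution arises analogously from the $\phi''/(\phi')^2$ factor in the outer integration by parts. The delicate point is to confirm that all outer-region terms close against $\|\widehat g\|_{L^2}+2^k\|\partial\widehat g\|_{L^2}$ without logarithmic loss, uniformly over $x$ in the stationary regime, and to handle the transition zone between the stationary and non-stationary $x$-ranges cleanly.
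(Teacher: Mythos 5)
The paper does not actually prove this lemma; it is quoted verbatim from \cite[Lemma 2.3]{MR3961297}, so there is no internal proof to compare against. Your stationary-phase strategy is the standard one for such estimates, and most of it is sound: the van der Corput bound \(\|K_{t,k}\|_{L^\infty}\lesssim (t2^{(\alpha-1)k})^{-1/2}\) does give \eqref{6.5}, the splitting at \(\delta\sim t^{-1/2}2^{(1-\alpha)k/2}\) is the right one, and the near-region and \(\partial\widehat{g}\)-terms come out exactly as you say.

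There is, however, one concrete step that fails as written. You attribute the \(\|\widehat{g}\|_{L^2}\)-term of \eqref{6} to the \(\phi''/(\phi')^2\) contribution of the outer integration by parts. If you estimate that contribution by Cauchy--Schwarz, using \(|\phi''|\sim t2^{(\alpha-1)k}\) and \(|\phi'|\gtrsim t2^{(\alpha-1)k}|\xi-\xi_0|\), you get \(\|\widehat{g}\|_{L^2}\,(t2^{(\alpha-1)k})^{-1}\delta^{-3/2}= t^{-1/4}2^{(1-\alpha)k/4}\|\widehat{g}\|_{L^2}\), which exceeds the claimed coefficient \(t^{-3/4}2^{-(1+3\alpha)k/4}\) by the factor \((t2^{(1+\alpha)k})^{1/2}\) and therefore does not close in the relevant regime \(t2^{(1+\alpha)k}\gg 1\). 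The fix is to estimate the \(\phi''/(\phi')^2\) term against \(\|\widehat{g}\|_{L^\infty}\) instead (it then integrates to \(t^{-1/2}2^{(1-\alpha)k/2}\|\widehat{g}\|_{L^\infty}\), i.e.\ it is absorbed into the first term of \eqref{6}), and to recognize that the \(\|\widehat{g}\|_{L^2}\)-term actually arises from the piece where the \(\eta\)-derivative falls on the cutoff \(\psi_k\): that piece is bounded by \(2^{-k}(t2^{(\alpha-1)k})^{-1}\delta^{-1/2}\|\widehat{g}\|_{L^2}=t^{-3/4}2^{-(1+3\alpha)k/4}\|\widehat{g}\|_{L^2}\), which is exactly the stated coefficient (using \(L^\infty\) there instead would cost a logarithm). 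You should also dispose separately of the degenerate regime \(t2^{(1+\alpha)k}\lesssim 1\), where \(\delta\gtrsim 2^k\) and the splitting is vacuous: there the trivial bound \(|\int\psi_k\widehat{g}|\lesssim 2^k\|\widehat{g}\|_{L^\infty}\le t^{-1/2}2^{(1-\alpha)k/2}\|\widehat{g}\|_{L^\infty}\) already gives \eqref{6}, and this also covers the transition zone in \(x\) that you flag at the end.
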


We next show the following decay estimates for the solution: 
\begin{lemma} 
Let \(\alpha\in (-1,0)\) and \(u\) be the solution to \eqref{eq:main}. Assume that
\begin{align*}
	t^{-p_0}\|u\|_{H^{N_0}}+(1+t)^{-p_0}\|f\|_{H^{1,1}}+\|f\|_Z\leq 1,
	\end{align*}
for any \(t\geq 1\), then we have
	\begin{align}\label{7}
	\|u\|_{L^\infty}+\|\partial_xu\|_{L^\infty}\lesssim t^{-1/2},
	\end{align}
for any \(t\geq 1\).
\end{lemma}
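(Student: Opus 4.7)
The plan is to combine a Littlewood--Paley decomposition with the linear dispersive estimate \eqref{6} and with Sobolev embedding for very high frequencies. Writing $u(t)=\sum_{k\in\Z}P_k u(t)$ and $P_k u(t)=e^{t|D|^\alpha\partial_x}P_k f(t)$, I will apply \eqref{6} to each piece with $g=P_k f$, feeding it the three quantitative ingredients supplied by the hypothesis: (i) $\|\widehat{P_k f}\|_{L^\infty}\lesssim (1+2^k)^{-10}$ from $\|f\|_Z\le 1$; (ii) $\|P_k f\|_{L^2}\lesssim\min\bigl(2^{k/2}(1+2^k)^{-10},\,2^{-N_0 k}t^{p_0}\bigr)$, from the $Z$-norm combined with the support size of $\psi_k$ and from $\|f\|_{H^{N_0}}\lesssim t^{p_0}$; and (iii) $\|\partial_\xi\widehat{P_k f}\|_{L^2}=\|xP_k f\|_{L^2}$, estimated by splitting $xP_k f=P_k(xf)+[x,P_k]f$ and using $\|xf\|_{H^1}\lesssim\|f\|_{H^{1,1}}\lesssim t^{p_0}$ together with the fact that $[x,P_k]$ is a Fourier multiplier of symbol $\partial\psi_k$, of size $\lesssim 2^{-k}$.

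For low frequencies $k\le 0$ the factor $2^{(1-\alpha)k/2}$ in the first term of \eqref{6} is geometrically summable since $\alpha<1$, and a short check shows that every exponent in $k$ arising in the second term of \eqref{6}, after substituting the bounds above, is strictly positive because $\alpha>-1$; the total contribution is $\lesssim t^{-1/2}+t^{-3/4+p_0}\lesssim t^{-1/2}$ using $p_0<1/4$. For intermediate frequencies $0\le k\le k_\ast$ the $Z$-gain $(1+2^k)^{-10}$ absorbs both the $2^{(1-\alpha)k/2}$ loss in the first term and the $2^{-19k/2}$ piece of the second term. The only potentially non-summable contribution is $t^{-3/4+p_0}\,2^{-(1+3\alpha)k/4}$, coming from $2^k\|xP_k f\|_{L^2}\lesssim t^{p_0}$; when $\alpha<-1/3$ this grows in $k$, but summed up to $k_\ast$ it is at most $t^{-3/4+p_0}\,2^{|1+3\alpha|k_\ast/4}$. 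For very high frequencies $k\ge k_\ast$ I switch to Sobolev, $\|P_k u\|_{L^\infty}\lesssim 2^{k/2}\|P_k f\|_{L^2}\lesssim 2^{(1/2-N_0)k}t^{p_0}$.

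Choosing the threshold $k_\ast=c\log_2 t$ with a suitable constant $c$ (any value larger than $(1/2+p_0)/(N_0-1/2)$ for the Sobolev tail and smaller than $(1-4p_0)/|1+3\alpha|$ for the intermediate sum, a non-empty range thanks to $N_0=100$ and $p_0\le 1/1000$) makes every contribution $\lesssim t^{-1/2}$ when summed, hence $\|u\|_{L^\infty}\lesssim t^{-1/2}$. The estimate for $\partial_x u$ follows the same template; the only difference is an extra Bernstein factor $2^k$ per dyadic block, which is absorbed at high frequencies by the $2^{-10k}$ gain from the $Z$-norm and at low frequencies by the remaining positive exponent in $k$.

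The main obstacle is the intermediate--high frequency regime when $\alpha$ is close to $-1$: there the factor $2^{-(1+3\alpha)k/4}$ in \eqref{6} is almost as bad as $2^{k/2}$, while $\|xP_k f\|_{L^2}$ only decays like $2^{-k}t^{p_0}$, so that $2^k\|\partial_\xi\widehat{P_k f}\|_{L^2}$ does not decay in $k$ at all. The smallness of $p_0$ together with the logarithmic-in-$t$ frequency cutoff is exactly what is needed to keep this borderline geometric sum well below $t^{-1/2}$; verifying it is a routine but careful computation using the explicit hypotheses on $N_0$ and $p_0$.
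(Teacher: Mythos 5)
Your overall architecture (Littlewood--Paley, the dispersive estimate \eqref{6} fed with the $Z$-, $H^{N_0}$- and $H^{1,1}$-bounds, a logarithmic-in-$t$ high-frequency threshold with Sobolev/Bernstein above it) matches the paper's, and your treatment of the intermediate and very high frequencies is sound. But there is a genuine gap in the low-frequency regime. You claim that for $k\le 0$ ``every exponent in $k$ arising in the second term of \eqref{6} \dots is strictly positive because $\alpha>-1$.'' That is false for $\alpha\in(-1/3,0)$. With your own bound (iii), the piece $2^k\|\partial_\xi\widehat{P_kf}\|_{L^2}$ contains $2^k\cdot 2^{-k}\|f\|_{L^2}\lesssim t^{p_0}$ from the commutator, which has no decay in $k$; the corresponding contribution to $\|P_ku\|_{L^\infty}$ is $t^{-3/4+p_0}2^{-(1+3\alpha)k/4}$, whose exponent $-(1+3\alpha)/4$ is \emph{negative} precisely when $\alpha>-1/3$ (and zero at $\alpha=-1/3$). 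The sum over $k\le 0$ then diverges, so your ``total contribution $\lesssim t^{-1/2}+t^{-3/4+p_0}$'' does not follow, and the argument fails to close exactly in the range $\alpha\in(-1/3,0)$ that the lemma is supposed to cover. It is telling that you never use the hypothesis $p_0\le-\alpha/100$, which is what the paper needs at this very point.

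The repair is the one the paper implements: introduce a \emph{low}-frequency cutoff at $2^k\sim t^{-1}$ in addition to your high one. For $2^k\le t^{-1}$ use plain Bernstein, $\|P_ku\|_{L^\infty}\lesssim 2^{k/2}\|u\|_{L^2}$, and sum. For $t^{-1}\le 2^k\le 1$ the offending sum is bounded by its largest term, $\sum_{t^{-1}\le 2^k\le 1}2^{-(1+3\alpha)k/4}\lesssim t^{(1+3\alpha)/4}$ when $\alpha>-1/3$ (and by $\log t$ at $\alpha=-1/3$), giving a contribution $t^{-3/4+p_0}\,t^{(1+3\alpha)/4}=t^{-1/2+3\alpha/4+p_0}$, which is $\lesssim t^{-1/2}$ only because $p_0\le-\alpha/100<-3\alpha/4$. (Alternatively, one can salvage a geometric sum by estimating the commutator piece with the $Z$-norm, $\|[x,P_k]f\|_{L^2}\lesssim 2^{-k}\cdot 2^{k/2}\|\widehat{f}\|_{L^\infty}$, instead of with $\|f\|_{L^2}\lesssim t^{p_0}$; but as written your estimate does not do this.) A secondary, more minor point: for the $\partial_xu$ bound the extra $2^k$ acting on the $t^{-3/4}2^{-(1+3\alpha)k/4}t^{p_0}$ term is not absorbed by the $Z$-norm gain (which only helps the first term of \eqref{6}); it forces the additional constraint $c\lesssim(1-4p_0)/(3-3\alpha)$ on your threshold, which is still compatible with the Sobolev tail but should be stated.
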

\begin{proof}  {\bf{Case  \(\alpha\in (-1,-\frac{1}{3}]\).}}
Observe  \(\frac{1-\alpha}{2}\in[\frac{2}{3},1)\) and \(-\frac{1+3\alpha}{4}\in[0,\frac{1}{2})\).  To prove  that \(\|u\|_{L^\infty}\) satisfy \eqref{7}, we first show 
\begin{equation}\label{8}
	\begin{aligned}
	\quad \sup_{k\in\Z}(2^{2 k}\left\|P_ku\right\|_{L^\infty})
	\lesssim t^{-\frac{1}{2}}.
	\end{aligned}
	\end{equation}
 In the frequency regime \(2^k\geq t^{(1-4p_0)/10}\),  we use \eqref{6.5} and
 \eqref{75} to deduce that 
	\begin{equation}\label{8.5}
	\begin{aligned}
	&\quad 2^{2 k}\left\|P_ku\right\|_{L^\infty}\lesssim t^{-\frac{1}{2}}2^{2 k}2^{\frac{1-\alpha}{2}k}\|P_k^\prime f\|_{L^1}\\
	&\lesssim t^{-\frac{1}{2}}2^{2 k}2^{\frac{1-\alpha}{2}k}2^{-\frac{k}{2}}\|\widehat{P_k^\prime f}\|_{L^2}^{1/2}(\|\widehat{P_k^\prime f}\|_{L^2}+2^k\|\partial\widehat{P_k^\prime f}\|_{L^2})^{1/2}\\
	&\lesssim t^{-\frac{1}{2}}2^{2 k}2^{\frac{1-\alpha}{2}k}2^{-\frac{k}{2}}2^{-\frac{N_0k}{2}}\|P_k^\prime f\|_{H^{N_0}}^{1/2}(\|\widehat{P_k^\prime f}\|_{L^2}+2^k\|\partial\widehat{P_k^\prime f}\|_{L^2})^{1/2}\\
	&\lesssim t^{-\frac{1}{2}}t^{\frac{1-4p_0}{10}(2+\frac{1-\alpha}{2}-\frac{1}{2}-\frac{N_0}{2})} t^{p_0},
	\end{aligned}
	\end{equation}
which gives a stronger bound than what we need. 
We then consider the frequency regime \(2^k\leq t^{(1-4p_0)/10}\).   
It follows from \eqref{6} that
 	\begin{equation}\label{9}
	\begin{aligned}
	\left\|P_ku\right\|_{L^\infty}
\lesssim t^{-\frac{1}{2}}2^{\frac{1-\alpha}{2}k}\big\|\widehat{P_k^\prime f}\big\|_{L^\infty}
+t^{-\frac{3}{4}}2^{-\frac{1+3\alpha}{4}k}\big(\big\|\widehat{P_k^\prime f}\big\|_{L^2}+2^k\big\|\partial\widehat{P_k^\prime f}\big\|_{L^2}\big).
	\end{aligned}
	\end{equation}
Now the desired bound \eqref{8} is a consequence of  \eqref{9} and of  the following facts
	\begin{equation*}
	\begin{aligned}
	\frac{2^{(\frac{1-\alpha}{2}+2)k}}{1+2^{10k}}\lesssim 1,\quad 2^{(-\frac{1+3\alpha}{4}+2)k} \lesssim t^{\frac{1}{4}-p_0},
	\end{aligned}
	\end{equation*}
	and 
	\begin{equation*}
	\begin{aligned}
	(1+2^{10k})\big\|\widehat{P_k^\prime f}\big\|_{L^\infty}\lesssim 1,\quad \|\widehat{P_k^\prime f}\|_{L^2}+2^k\|\partial\widehat{P_k^\prime f}\|_{L^2}\lesssim t^{p_0}.
	\end{aligned}
	\end{equation*}

	We are ready to show \(\|u\|_{L^\infty}\) satisfy \eqref{7}. 
       It first follows from \eqref{8} that 
	 \begin{equation*}
	 \begin{aligned}
	  \sum_{0\leq k\in\Z}\|P_ku\|_{L^\infty}= \sum_{0\leq k\in\Z}2^{-2 k}(2^{2 k}\left\|P_k u\right\|_{L^\infty})
	 \lesssim t^{-\frac{1}{2}} \sum_{0\leq k\in\Z}2^{-2 k}\lesssim t^{-\frac{1}{2}} .
	 \end{aligned}
	 \end{equation*}
	Thus it  remains to estimate \( \sum_{0> k\in\Z}\|P_ku\|_{L^\infty}\).  When \(2^k\leq t^{-1}\), the desired bound follows from the following inequality after summation in \(k\) 
	 \begin{equation*}
	 \begin{aligned}
	 \|P_k u\|_{L^\infty}
	 \lesssim 2^{k/2}\|u\|_{L^2}.
	 \end{aligned}
	 \end{equation*}
where we have used Bernstein inequality.
For  \(1\geq 2^k\geq t^{-1}\),  it follows from \eqref{9} that
	 \begin{equation}
	 \begin{aligned}
	 \sum_{1\geq 2^k\geq t^{-1}}\|P_k u\|_{L^\infty}
	 &\lesssim t^{-\frac{1}{2}}\sum_{1\geq 2^k\geq t^{-1}}\frac{2^{\frac{1-\alpha}{2}k}}{1+2^{10k}}
	 \big\|(1+2^{10k})\widehat{P_k^\prime f}\big\|_{L^\infty}\\
	 &\quad+t^{-\frac{3}{4}}t^{p_0}\sum_{1\geq 2^k\geq t^{-1}}2^{-\frac{1+3\alpha}{4}k}\\
	 &\lesssim t^{-\frac{1}{2}}
	 +t^{-\frac{3}{4}}t^{p_0}\log t \lesssim t^{-\frac{1}{2}}.
	 \end{aligned}
	 \end{equation}

We finally prove the bound for \(\|\partial_xu\|_{L^\infty}\).  One has
	\begin{equation*}
	\begin{aligned}
	&\quad\|\partial_xu\|_{L^\infty}\leq \sum_{k\in\Z}\|P_k\partial_xu\|_{L^\infty}\lesssim\sum_{k\in\Z}2^k\|P_ku\|_{L^\infty}\\
	&=\sum_{0\geq k\in\Z}2^k\|P_ku\|_{L^\infty}+\sum_{0\leq k\in \Z}2^{-k}(2^{2 k}\|P_ku\|_{L^\infty})\\
	&\lesssim \|u\|_{L^\infty}\sum_{0\geq k\in\Z}2^k+t^{-\frac{1}{2}}\sum_{0\leq k\in \Z}2^{-k}
\lesssim t^{-\frac{1}{2}}.
	\end{aligned}
	\end{equation*}

\noindent{\bf{Case  \(\alpha\in (-\frac{1}{3},0)\).}}
Observe that   \(\frac{1-\alpha}{2}\in(\frac{1}{2},\frac{2}{3})\) and \(-\frac{1+3\alpha}{4}\in(-\frac{1}{4},0)\). We now split the frequencies into \(2^k\geq t^{(1-4p_0)/8}\)
and \(2^k\leq t^{(1-4p_0)/8}\). In a similar fashion as before, one still has 
\begin{equation*}
\begin{aligned}
\quad \sup_{k\in\Z}(2^{2 k}\left\|P_ku\right\|_{L^\infty})
\lesssim t^{-\frac{1}{2}},
\end{aligned}
\end{equation*}
and furthermore shows 
\begin{equation*}
\begin{aligned}
\sum_{0\leq k\in\Z}\|P_ku\|_{L^\infty}\lesssim t^{-\frac{1}{2}} .
\end{aligned}
\end{equation*}
The desired bound for \( \sum_{2^k\leq t^{-1}}\|P_k u\|_{L^\infty}\) may be obtained as before. We now consider the frequency regime  \(1\geq 2^k\geq t^{-1}\).  It follows from \eqref{9} that
  	\begin{equation*}
  	\begin{aligned}
  	\sum_{1\geq 2^k\geq t^{-1}}\left\|P_ku\right\|_{L^\infty}
  	&\lesssim t^{-\frac{1}{2}}\sum_{1\geq 2^k\geq t^{-1}}\frac{2^{\frac{1-\alpha}{2}k}}{1+2^{10k}}
  	\big\|(1+2^{10k})\widehat{P_k^\prime f}\big\|_{L^\infty}\\
  	&\quad+t^{-\frac{3}{4}}t^{p_0}\sum_{1\geq 2^k\geq t^{-1}}2^{-\frac{1+3\alpha}{4}k}\\
  	&\lesssim t^{-\frac{1}{2}}+t^{-\frac{3}{4}}t^{\frac{1+3\alpha}{4}}t^{p_0}\lesssim t^{-\frac{1}{2}},
  	\end{aligned}
  	\end{equation*}
where we have used the assumption \(p_0\leq-\frac{\alpha}{100}\) in the last inequality.  	

The desired bound for \(\|\partial_xu\|_{L^\infty}\) can be estimated as before.

\end{proof}

\section{Estimates on \(\|u\|_{H^{N_0}}\) and \(\|f\|_{H^{1,1}}\)}
In this section we will prove uniform bounds for the energy part in \eqref{4}.  More precisely :
\begin{theorem}\label{pr:1} Let \(u\) be a solution of \eqref{eq:main}-\eqref{eq:initial} satisfying the a priori bounds \eqref{4}. Then the following estimates hold true:
	\begin{align}\label{10}
	\|u(t,\cdot)\|_{H^{N_0}}\leq C\varepsilon_0\langle t \rangle^{C\varepsilon_1^2},
	\end{align}
and
	\begin{align}\label{11}
	\|f(t,\cdot)\|_{H^{1,1}}\leq C(\varepsilon_0+ \varepsilon_1^3)\langle t \rangle^{C\varepsilon_1^2}.
	\end{align}	
\end{theorem}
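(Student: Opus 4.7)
The plan is to prove the two bounds separately: \eqref{10} by a direct energy estimate, and \eqref{11} by a Fourier-side weighted estimate that leads to a trilinear oscillatory integral for which the principal difficulty is a resonance/stationary-phase analysis.

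\emph{Sobolev estimate \eqref{10}.} Since $|D|^\alpha\partial_x$ is skew-adjoint on $L^2$, differentiating $\|u(t)\|_{H^{N_0}}^2$ in time kills the linear contribution and leaves only the cubic nonlinearity. Writing $u^2\partial_x u=\tfrac13\partial_x(u^3)$ and expanding $\partial_x^{N_0}(u^3)$ via Leibniz, the top-order contribution $\int u^2\partial_x^{N_0+1}u\cdot\partial_x^{N_0}u\,\diff x$ integrates by parts to $-\int u\,\partial_x u\,(\partial_x^{N_0}u)^2\,\diff x$, hence is dominated by $\|u\|_{L^\infty}\|\partial_x u\|_{L^\infty}\|u\|_{H^{N_0}}^2$. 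The remaining Leibniz terms are controlled by the usual Kato--Ponce / Sobolev product estimates and reduce to the same quantity. Invoking the decay \eqref{7} from Lemma 2.2 gives $\frac{d}{dt}\|u\|_{H^{N_0}}^2\lesssim \varepsilon_1^2 t^{-1}\|u\|_{H^{N_0}}^2$, and Grönwall's inequality with initial size $\varepsilon_0$ yields \eqref{10}.

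\emph{Weighted estimate \eqref{11}.} On the Fourier side, $\|f\|_{H^{1,1}}\sim\|\langle\xi\rangle\widehat{f}\|_{L^2_\xi}+\|\langle\xi\rangle\partial_\xi\widehat{f}\|_{L^2_\xi}$. Since the semigroup is unitary on $L^2$, the first piece equals $\|u\|_{H^1}$ and is absorbed by \eqref{10}. The whole work is in the second piece. The profile satisfies a Duhamel identity of the shape
\[
\partial_t\widehat{f}(t,\xi)=c\,\mathrm{i}\xi\int_{\R^2}e^{\mathrm{i}t\Phi(\xi,\xi_1,\xi_2)}\widehat{f}(\xi_1)\widehat{f}(\xi_2)\widehat{f}(\xi-\xi_1-\xi_2)\,\diff\xi_1\,\diff\xi_2,
\]
with phase $\Phi=-\xi|\xi|^\alpha+\xi_1|\xi_1|^\alpha+\xi_2|\xi_2|^\alpha+(\xi-\xi_1-\xi_2)|\xi-\xi_1-\xi_2|^\alpha$. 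Applying $\partial_\xi$ and integrating in time produces three kinds of contributions: a harmless cubic term with no $\xi$ prefactor, a term in which $\partial_\xi$ lands on $\widehat{f}(\xi-\xi_1-\xi_2)$ (symmetrizable after a change of variable), and the dangerous term in which $\partial_\xi$ hits the phase, producing a factor $\mathrm{i}t\,\xi\,\partial_\xi\Phi$.

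\emph{Main obstruction.} The bad factor $t$ above is handled by a Littlewood--Paley decomposition in the four frequencies $(\xi,\xi_1,\xi_2,\xi_3)$, together with a case split based on the ordering of $k,k_1,k_2,k_3$: in nonresonant regions one has $|\Phi|\gtrsim 2^{(1+\alpha)k_{\max}}$, which allows an integration by parts in $s$ that converts the $t$ into an extra $\varepsilon_1^2 s^{-1}$ coming from the equation for $\partial_s\widehat{f}$; in quasi-resonant or low-frequency output regions, one exploits the pointwise $Z$-norm control $|\widehat{f}(\xi)|\lesssim\varepsilon_1\langle\xi\rangle^{-10}$ together with the dispersive decay \eqref{7} and the Bernstein-type volume bound for the characteristic set. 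The outer factor $\xi$ inherited from the derivative nonlinearity is precisely what is needed to absorb the low-frequency singularity of $\partial_\xi\Phi$ and to control the low output regime---the structural feature emphasized in the introduction that distinguishes the argument from \cite{MR3121725}. Summing the individual bounds across the frequency decomposition yields
\[
\frac{d}{dt}\bigl\|\langle\xi\rangle\partial_\xi\widehat{f}(t,\xi)\bigr\|_{L^2_\xi}\lesssim \varepsilon_1^3\,t^{-1+C\varepsilon_1^2},
\]
and integrating from $1$ to $t$, combined with the initial control by $\varepsilon_0$, produces \eqref{11}.
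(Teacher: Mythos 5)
Your proof of \eqref{10} is essentially the paper's: the same integration by parts on the top-order term $\int u^2\partial_x^{N_0+1}u\,\partial_x^{N_0}u\,\diff x$, the same commutator/Leibniz bound by $\|u\|_{L^\infty}\|\partial_x u\|_{L^\infty}\|u\|_{H^{N_0}}^2$, and Gr\"onwall with the decay \eqref{7}. That half is fine.

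For \eqref{11} you take a genuinely different route (Fourier-side weighted estimate via $\partial_\xi$ of the Duhamel formula), and here there is a real gap. The dangerous term you identify, $\mathrm{i}t\,\xi\,\partial_\xi\Phi$ times the cubic integrand, cannot be disposed of by the size estimates you invoke: putting two factors in $L^\infty_x$ (each decaying like $t^{-1/2}$) and one in $L^2$ via Lemma \ref{symbol estimate} gives a contribution of order $t\cdot t^{-1}\cdot\|f\|_{H^{N_0}}\sim\varepsilon_1^3 t^{p_0}$ per unit time --- the explicit factor $t$ is only just cancelled, leaving nothing integrable, so the asserted inequality $\frac{\diff}{\diff t}\|\langle\xi\rangle\partial_\xi\widehat{f}\|_{L^2_\xi}\lesssim\varepsilon_1^3 t^{-1+C\varepsilon_1^2}$ does not follow from what you wrote. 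To gain the missing full power of $t$ one must run the complete space--time resonance case analysis (a Littlewood--Paley decomposition with integration by parts in $s$ where $|\Phi|$ is bounded below, and a separate treatment of the quasi-resonant set), now in $L^2_\xi$ rather than $L^\infty_\xi$; this is an argument of the same scale as the paper's entire Section 4, and you assert its conclusion rather than carry it out. The quasi-resonant region, where $|\Phi|$ is too small to integrate by parts in $s$ while $\partial_\xi\Phi$ need not vanish, is exactly where it is unclear your bookkeeping closes.

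The paper avoids this entirely by a physical-space vector field argument. With $\Lambda=\partial_x^{-1}\big((\alpha+1)t\partial_t+x\partial_x+1\big)$ and $\mathcal{J}=x+(\alpha+1)t|D|^\alpha$, one computes $\mathcal{L}\Lambda u=u^2\partial_x\Lambda u+(\alpha-2)u^3/3$, so $\|\Lambda u\|_{L^2}$ (and likewise $\|\Lambda\partial_x u\|_{L^2}$) obeys the same Gr\"onwall estimate as in \eqref{10}; the algebraic identity \eqref{13}, $\mathcal{J}u=\Lambda u+(\alpha+1)tu^3/3$, together with $\|xf\|_{L^2}=\|\mathcal{J}u\|_{L^2}$ and the decay \eqref{7} to absorb the cubic correction, then yields \eqref{11} in a page, with no Fourier-side resonance analysis. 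If you want to keep your approach you must supply the full multilinear case analysis; otherwise the vector field route is both shorter and the one the scaling structure of \eqref{eq:main} is built for.
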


\begin{proof} Let 
	\begin{align*}
	\mathcal{L}=\partial_t-|D|^\alpha \partial_x,\  \mathcal{J}=x+(\alpha+1)t|D|^\alpha,
\  \Lambda=\partial_x^{-1}\big((\alpha+1)t\partial_t+x\partial_x+1\big).
	\end{align*}
Since \([\mathcal{L},\partial_x]=0\), the solution \(u\) of  \eqref{eq:main}-\eqref{eq:initial} satisfies 
\begin{align*}
\mathcal{L}\partial_x^ku=-\partial_x^k(u^2\partial_xu),\quad k=1,2,\dots,N_0.
\end{align*}	
This gives the energy identity 
\begin{align*}
\frac{1}{2}\frac{\diff}{\diff t}\int|\partial_x^ku|^2\,\diff x
=-\int\big(\partial_x^k(u^2\partial_xu)-u^2\partial_x^{k+1}u\big)\partial_x^ku\,\diff x-\int u^2\partial_x^{k+1}u\partial_x^ku\,\diff x.
\end{align*}	
Notice that 
\begin{align*}
\|\partial_x^k(u^2\partial_xu)-u^2\partial_x^{k+1}u\|_{L^2}
&\lesssim \|\partial_x(u^2)\|_{L^\infty}\|\partial_x^ku\|_{L^2}+\|\partial_xu\|_{L^\infty}\|\partial_x^k(u^2)\|_{L^2}\\
&\lesssim \|u\|_{L^\infty}\|\partial_xu\|_{L^\infty}\|\partial_x^ku\|_{L^2},
\end{align*}
and
\begin{align*}
\int u^2\partial_x^{k+1}u\partial_x^ku\,\diff x
=-\frac{1}{2}\int \partial_x(u^2)|\partial_x^ku|^2\,\diff x
\lesssim \|u\|_{L^\infty}\|\partial_xu\|_{L^\infty}\|\partial_x^ku\|_{L^2}^2.
\end{align*}
Hence 
\begin{align*}
\frac{\diff}{\diff t}\|\partial_x^ku\|_{L^2}^2
\lesssim \|u\|_{L^\infty}\|\partial_xu\|_{L^\infty}\|\partial_x^ku\|_{L^2}^2
\lesssim \varepsilon_1^2\langle t \rangle^{-1}\|\partial_x^ku\|_{L^2}^2,
\end{align*}
where we have used \eqref{7}, which entails \eqref{10}.

A small calculation shows
\begin{align*}
\mathcal{L}\Lambda u=u^2\partial_x\Lambda u+(\alpha-2)u^3/3.
\end{align*}
It follows that 
\begin{equation*}
\begin{aligned}
\frac{1}{2}\frac{\diff}{\diff t}\|\Lambda u\|_{L^2}^2&=\int u^2\partial_x\Lambda u \Lambda u\,\diff x+\frac{\alpha-2}{3}\int u^3\Lambda u\,\diff x\\
&\lesssim \|u\|_{L^\infty}\|\partial_x u\|_{L^\infty}\|\Lambda u\|_{L^2}^2+\|u\|_{L^\infty}^2\|u\|_{L^2}\|\Lambda u\|_{L^2},
\end{aligned}
\end{equation*}
which combined with  \eqref{7}  lead to
\begin{align}\label{12}
\|\Lambda u\|_{L^2}\lesssim  \varepsilon_0 \langle t \rangle^{C\varepsilon_1^2}.
\end{align}
Notice that
\begin{align}\label{13}
\mathcal{J}u=\Lambda u+(\alpha+1)t u^3/3,
\end{align}
then we employ \eqref{7}, \eqref{12} and \eqref{13} to deduce
\begin{equation}\label{13.5}
\begin{aligned}
\|x f\|_{L^2}=\|\mathcal{J}u\|_{L^2}\lesssim  \varepsilon_0 \langle t \rangle^{C\varepsilon_1^2}+t\|u\|_{L^\infty}^2\|u\|_{L^2}\lesssim (\varepsilon_0+ \varepsilon_1^3) \langle t \rangle^{C\varepsilon_1^2}.
\end{aligned}
\end{equation}

Using \([\mathcal{L},\partial_x]=0\) and \([\Lambda,\partial_x]=-\mathrm{id}\), one calculates that
\begin{align*}
\mathcal{L}\Lambda\partial_x u
=u^2\partial_x\Lambda \partial_x u+2u\partial_x u\Lambda \partial_x u+(\alpha+2) u^2\partial_x u.
\end{align*}
This results into the estimate
\begin{align*}
\frac{\diff}{\diff t}\|\Lambda \partial_xu\|_{L^2}^2
\lesssim \|u\|_{L^\infty}\|\partial_xu\|_{L^\infty}\|\Lambda \partial_xu\|_{L^2}^2+\|u\|_{L^\infty}\|\partial_xu\|_{L^\infty}\|u\|_{L^2}\|\Lambda \partial_xu\|_{L^2},
\end{align*}
which yields
\begin{align}\label{14}
\|\Lambda \partial_x u\|_{L^2}\lesssim \varepsilon_0 \langle t \rangle^{C\varepsilon_1^2}.
\end{align}
We use \([\mathcal{J},\partial_x]=-\mathrm{id}\) to calculate 
\begin{align}\label{15}
	\mathcal{J}\partial_x u
	=-\Lambda u-(\alpha+1)t u^3/3+(\Lambda\partial_x+1) u+(\alpha+1)t u^2\partial_x u.
\end{align}
Therefore the estimates \eqref{7}, \eqref{10}, \eqref{12} and \eqref{14} yield
\begin{align}\label{16}
\|x\partial_x f\|_{L^2}=\|\mathcal{J} \partial_x u\|_{L^2}\lesssim (\varepsilon_0+ \varepsilon_1^3) \langle t \rangle^{C\varepsilon_1^2}.
\end{align}

The estimate \eqref{11} is a consequence of \eqref{13.5} and \eqref{16}.

\end{proof}

\section{Estimates on \(\|f\|_Z\)}
Let
\begin{equation*}
\begin{aligned}
\Phi(\xi,\eta,\sigma)=|\xi|^{\alpha}\xi-|\xi-\eta-\sigma|^{\alpha}(\xi-\eta-\sigma)-|\eta|^{\alpha}\eta-|\sigma|^{\alpha}\sigma.
\end{aligned}
\end{equation*}
Taking the Fourier transform of \eqref{eq:main} gives
\begin{equation}\label{17}
\begin{aligned}
\partial_t\widehat{f}(\xi,t)&=-\mathrm{i}(2\pi)^{-1}\xi\int_{\R^2}e^{\mathrm{i}t\Phi(\xi,\eta,\sigma)}
\widehat{f}(\xi-\eta-\sigma,t)\widehat{f}(\eta,t)\widehat{f}(\sigma,t)\,\diff\eta \diff \sigma\\
&=\colon-\mathrm{i}(2\pi)^{-1}I(\xi,t).
\end{aligned}
\end{equation}
Set
\begin{align*}
	H(\xi,t):=\frac{3\xi|\xi|^{1-\alpha}}{\alpha(\alpha+1)}\int_1^t|\widehat{f}(s,\xi)|^2\frac{\diff s}{s},
\end{align*}	
and
\begin{align*}
g(\xi,t):=e^{\mathrm{i}H(\xi,t)}\widehat{f}(\xi,t).
\end{align*}
Then \(g\) satisfies the following evolutionary equation
\begin{align}\label{18}
\partial_t g(\xi,t)=-\mathrm{i}(2\pi)^{-1}e^{\mathrm{i}H(\xi,t)}\big( I(\xi,t)-3\tilde{c}t^{-1}\xi|\xi|^{1-\alpha}|\widehat{f}(\xi,t)|^2\widehat{f}(\xi,t)\big),
\end{align}
where \(\tilde{c}:=2\pi/[\alpha(\alpha+1)]\).

This section is  aimed to show the following theorem:
\begin{theorem}\label{pr:2}
It holds that
\begin{align}\label{19}
t_1^{p_0}\left\|(1+|\xi|)^{10}\big(g(\xi,t_2)-g(\xi,t_1)\big)\right\|_{L^\infty_\xi}\lesssim\epsilon_0,
\end{align} 
for any \(t_1\leq t_2\in[1,T]\). 
\end{theorem}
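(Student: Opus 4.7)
The plan is to integrate the modified profile equation \eqref{18} from $t_1$ to $t_2$, giving
\begin{align*}
g(\xi,t_2) - g(\xi,t_1) = -\mathrm{i}(2\pi)^{-1}\int_{t_1}^{t_2} e^{\mathrm{i}H(\xi,s)}\Bigl( I(\xi,s) - 3\tilde c s^{-1}\xi|\xi|^{1-\alpha}|\widehat f(\xi,s)|^2 \widehat f(\xi,s)\Bigr)\,\diff s.
\end{align*}
To prove \eqref{19} it suffices to bound the bracketed integrand, weighted by $(1+|\xi|)^{10}$, in $L^\infty_\xi$ by $\varepsilon_1^3 s^{-1-\delta}$ for some $\delta > p_0$; integration in $s$ then produces the required prefactor $t_1^{-p_0}$ with room to spare.

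The first step is a Littlewood--Paley decomposition of both the output frequency $\xi$ and the three input frequencies, writing $I = \sum_{k,k_1,k_2,k_3} I_{k,k_1,k_2,k_3}$. Contributions fall into three broad categories. Configurations with some very high input frequency are handled using the Sobolev bound \eqref{10} and Bernstein, yielding smallness from the $N_0 \gg 10$ margin. Frequency-unbalanced configurations, where $\nabla_{\eta,\sigma}\Phi$ is large, are treated by integration by parts in $(\eta,\sigma)$; derivatives falling on $\widehat f$ cost at most $\|\partial_\xi \widehat f\|_{L^2} \lesssim \|f\|_{H^{1,1}} \lesssim \varepsilon_1 s^{p_0}$ from \eqref{11}, which is beaten by the $s^{-1}$ gain from the oscillation. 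The singularity of $|D|^\alpha$ at frequency zero creates a further subcase when $|\eta|$, $|\sigma|$ or $|\xi-\eta-\sigma|$ is small; here the hypothesis $p_0 \leq -\alpha/100$ is used, in the same spirit as \eqref{8.5}.

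The essential case is the balanced configuration $k \sim k_1 \sim k_2 \sim k_3$. Solving $\nabla_{\eta,\sigma}\Phi = 0$ forces $|\eta|^\alpha = |\sigma|^\alpha = |\xi-\eta-\sigma|^\alpha$, and imposing in addition $\Phi = 0$ isolates the three permutations of $(\eta,\sigma,\xi-\eta-\sigma) = (\xi,-\xi,\xi)$; the reality of $u$ gives $\widehat f(-\xi) = \overline{\widehat f(\xi)}$, so the product of three profile values at these points becomes $|\widehat f(\xi)|^2 \widehat f(\xi)$. At each such point the Hessian of $\Phi$ is $(\alpha+1)|\xi|^{\alpha-1}$ times a fixed nondegenerate matrix, and standard two-dimensional stationary phase yields
\begin{align*}
I(\xi,s) = 3\tilde c s^{-1}\xi|\xi|^{1-\alpha}|\widehat f(\xi,s)|^2 \widehat f(\xi,s) + R(\xi,s),
\end{align*}
the constant $\tilde c$ arising from the Hessian determinant and the factor $3$ counting the symmetric critical points. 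This leading term exactly cancels the correction introduced in the definition of $g$, so only the remainder $R$ must be estimated.

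The main obstacle is bounding $R$ with a $\delta > p_0$ gain. Concretely, one localizes at scale $\sim (s|\xi|^{\alpha-1})^{-1/2}$ around each resonant critical point: outside this scale, a nonstationary-phase/integration-by-parts argument in $(\eta,\sigma)$ produces decay strong enough to beat any $s^{2p_0}$ loss coming from the $H^{1,1}$ profile bound, while inside the scale one uses the pointwise $Z$-norm control $\|(1+|\xi|)^{10}\widehat f\|_{L^\infty_\xi} \lesssim \varepsilon_1$ directly. The delicate point is the interplay between the singular Hessian $|\xi|^{\alpha-1}$ at small $|\xi|$, the polynomial weight $(1+|\xi|)^{10}$ in the $Z$-norm, and the slow-growth budget $s^{p_0}$ absorbed from $\|f\|_{H^{1,1}}$; tracking these factors is precisely what forces the smallness condition $p_0 \leq -\alpha/100$ in \eqref{1}.
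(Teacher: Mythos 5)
Your skeleton is recognizably the same as the paper's: integrate \eqref{18} in time, decompose dyadically in the input and output frequencies, kill unbalanced configurations by integration by parts in $(\eta,\sigma)$, and extract the resonant term by a quadratic (stationary-phase) approximation of $\Phi$ near the three permutations of $(\xi,-\xi,\xi)$, with the constant $\tilde c=2\pi/[\alpha(\alpha+1)]$ produced by the Hessian $\alpha(\alpha+1)|\xi|^{\alpha-1}(\xi-\eta)(\xi-\sigma)$; this is exactly Proposition \ref{le:1} and Lemma \ref{le:2}. However, there is a genuine gap in your reduction ``it suffices to bound the integrand pointwise by $\varepsilon_1^3 s^{-1-\delta}$.'' That pointwise bound is false for the spatially stationary but non--time-resonant configurations. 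Already in the balanced regime $k\sim k_1\sim k_2\sim k_3$, the phase has critical points in $(\eta,\sigma)$ other than the three space-time resonances: for the sign configuration $(+,+,+)$ the point $\eta=\sigma=\xi-\eta-\sigma=\xi/3$ is stationary with $\Phi=(1-3^{-\alpha})|\xi|^{\alpha}\xi\neq 0$, and stationary phase there yields a contribution of size exactly $s^{-1}$ times an oscillating factor $e^{\mathrm{i}s\Phi_*}$. This term sits inside your remainder $R$, is not $O(s^{-1-\delta})$, and its naive time integral over $[t_1,t_2]$ is only $O(\log(t_2/t_1))$, which does not give \eqref{19}.

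The missing ingredient is the normal-form half of the space-time resonance method: for the sign configurations $(+,+,+),(+,-,-),(-,+,-),(-,-,+)$ and for the unbalanced regimes one needs a lower bound on $|\Phi|$ itself (such as \eqref{42}, \eqref{59}, \eqref{63}, \eqref{66} and the weakly elliptic bound \eqref{70} near the degenerate set $\eta+\sigma=0$, where the cutoff $\chi_{k,m}$ is required) and then an integration by parts in $s$. That step costs a factor $2^m$ and a derivative $\partial_s$ falling on $e^{\mathrm{i}H}\widehat{f_{k_1}}\widehat{f_{k_2}}\widehat{f_{k_3}}$, so one must also prove the bounds \eqref{48} on $\partial_sH$ and \eqref{49} on $\|\partial_s\widehat{f_l}\|_{L^2}$ (the latter coming back through the equation \eqref{17}); none of this appears in your outline. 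Without it the argument does not close, even though everything you wrote about the genuinely resonant piece and about the role of $p_0\leq-\alpha/100$ is consistent with the paper.
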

\noindent  The \(Z\)-norm part of \eqref{5} is an immediate consequence of the estimate \eqref{19} which also entails \eqref{3}.

\subsection{Proof of Theorem \ref{pr:2}}
We decompose in frequencies 
\begin{equation}\label{20}
\begin{aligned}
I(\xi,t)=\sum_{k_1,k_2,k_3\in \Z}I_{k_1,k_2,k_3}(\xi,t),
\end{aligned}
\end{equation}
in which  
\begin{equation}\label{21}
\begin{aligned}
I_{k_1,k_2,k_3}(\xi,t):=\xi\int_{\R^2}e^{\mathrm{i}t\Phi(\xi,\eta,\sigma)}\widehat{f_{k_1}}(\xi-\eta-\sigma,t)\widehat{f_{k_2}}(\eta,t)\widehat{f_{k_3}}(\sigma,t)\,\diff\eta \diff \sigma.
\end{aligned}
\end{equation}

For \eqref{19}, it suffices to show that if \(t_1\leq t_2\in [2^m-1,2^{m+1}]\cap[1,T]\), for some \(m\in \{1,2,\dots\}\), then
\begin{align}\label{22}
\left\|(1+|\xi|)^{10}\big(g(\xi,t_2)-g(\xi,t_1)\big)\right\|_{L^\infty_\xi}\lesssim\epsilon_02^{-p_0m}.
\end{align} 

Let \(k\in\Z\) and \(|\xi|\in[2^k,2^{k+1}]\) and \(s\in[2^m-1,2^{m+1}]\cap[1,T]\). Using the interpolation \eqref{75}, similar to \eqref{8.5}, it is easy to see that
\begin{equation}\label{24}
	\begin{aligned}
		\big|(1+|\xi|^{10})\widehat{f_k}(\xi)\big|\lesssim \epsilon_02^{-p_0m},
	\end{aligned}
\end{equation}
for  \(k\in [p_0m,\infty)\cap \Z\), which entails \eqref{22} in this frequency regime.

By the assumptions \eqref{4}, for \(s\in[2^m-1,2^{m+1}]\cap[1,T]\) and any \(l\in\Z\), we have 
\begin{equation}\label{23}
\begin{aligned}
&\|\widehat{f_l}(s)\|_{L^2}\lesssim \epsilon_12^{p_0m}2^{-N_0l_+},\\
&\|\partial\widehat{f_l}(s)\|_{L^2}\lesssim \epsilon_12^{p_0m}2^{-l},\\
&\|\widehat{f_l}(s)\|_{L^\infty}\lesssim \epsilon_12^{-10l_+},\\
&\|e^{\mp\mathrm{i}s\Lambda}f_l(s)\|_{L^\infty}\lesssim \epsilon_12^{-m/2},
\end{aligned}
\end{equation}
where \(l_+=\max(l,0)\).

It remains to consider \eqref{22} for \(|\xi|\in[2^k,2^{k+1}]\) with \(k\in (-\infty,p_0m]\).  By the equation \eqref{18} and the decomposition \eqref{20}-\eqref{21}, it suffices to show that 
\begin{equation}\label{25}
\begin{aligned}
&\sum_{k_1,k_2,k_3\in \Z}\bigg|\int_{t_1}^{t_2}e^{\mathrm{i}H(\xi,s)}\big( I_{k_1,k_2,k_3}(\xi,s)-\tilde{c}s^{-1}\xi|\xi|^{1-\alpha}\widehat{f_{k_1}}(\xi,s)\widehat{f_{k_2}}(\xi,s)\widehat{f_{k_3}}(-\xi,s)\\
&\quad\quad\quad\quad\quad\quad\quad\quad\quad\quad\quad\quad\quad\quad\quad-\tilde{c}
s^{-1}\xi|\xi|^{1-\alpha}\widehat{f_{k_1}}(\xi,s)\widehat{f_{k_2}}(-\xi,s)\widehat{f_{k_3}}(\xi,s)\\
&\quad\quad\quad\quad\quad\quad\quad\quad\quad\quad\quad\quad\quad-\tilde{c}s^{-1}\xi|\xi|^{1-\alpha}\widehat{f_{k_1}}(-\xi,s)\widehat{f_{k_2}}(\xi,s)\widehat{f_{k_3}}(\xi,s)\big)\,\diff s\bigg|\\
&\quad\quad\quad\quad\quad\quad\quad\quad\quad\quad\quad\quad\quad\quad\quad\lesssim
\epsilon_1^32^{-p_0m}2^{-10k_+},
\end{aligned}
\end{equation} 
for \(t_1\leq t_2\in [2^m-1,2^{m+1}]\cap [1,T]\). 

Using \eqref{23}, it is straightforward to show 
\begin{equation}\label{26}
\begin{aligned}
|I_{k_1,k_2,k_3}(\xi,s)|&\lesssim \epsilon_1^32^{3p_0m}2^k2^{\min(k_1,k_2,k_3)/2}2^{-N_0({k_1}_++{k_2}_++{k_3}_+)},
\end{aligned} 
\end{equation}
and
\begin{equation}\label{27}
\begin{aligned}
|I_{k_1,k_2,k_3}(\xi,s)|\lesssim \epsilon_1^32^k2^{\min(k_1,k_2,k_3)}2^{\text{med}(k_1,k_2,k_3)}2^{-10\max({k_1}_+,{k_2}_+,{k_3}_+)},
\end{aligned} 
\end{equation}
and
\begin{equation}\label{28}
\begin{aligned}
&\quad s^{-1}|\xi|^{2-\alpha}\big(\big|\widehat{f_{k_1}}(\xi,s)\widehat{f_{k_2}}(\xi,s)\widehat{f_{k_3}}(-\xi,s)\big|+\big|\widehat{f_{k_1}}(\xi,s)\widehat{f_{k_2}}(-\xi,s)\widehat{f_{k_3}}(\xi,s)\big|\\
&\quad\quad\quad\quad\quad\quad\quad\quad\quad\quad\quad\quad\quad+\big|\widehat{f_{k_1}}(-\xi,s)\widehat{f_{k_2}}(\xi,s)\widehat{f_{k_3}}(\xi,s)\big|\big)\\
&\lesssim \epsilon_1^32^{-m}2^{(2-\alpha)k}2^{-30k_+}{\bf{1}}_{[0,4]}\big(\max(|k_1-k|,|k_2-k|,|k_3-k|)\big).
\end{aligned} 
\end{equation}

With the bounds \eqref{26}-\eqref{28} at hand, one easily verifies \eqref{25} if one of the following conditions holds 
\begin{equation*}
\begin{aligned}
&\min(k_1,k_2,k_3)\leq -4m,\\
&\max(k_1,k_2,k_3)\geq p_0m/10,\\
&\min(k_1,k_2,k_3)+\mathrm{med}(k_1,k_2,k_3)\leq -(1+10p_0)m.
\end{aligned} 
\end{equation*}

Therefore, to complete the proof \eqref{25},  we are  reduced to show
\begin{theorem}\label{th:reduce}
Assume that \(k,k_1,k_2,k_3\in\Z\), \(m\in\Z\cap[100,\infty)\), \(|\xi|\in[2^k,2^{k+1}]\), and \(t_1\leq t_2\in [2^m-1,2^{m+1}]\cap [1,T]\). If
\begin{equation}\label{29}
\begin{aligned}
&k\in (-\infty,p_0m],\\
&k_1,k_2,k_3\in[-4m,p_0m/10],\\
&\min(k_1,k_2,k_3)+\mathrm{med}(k_1,k_2,k_3)\geq -(1+10p_0)m,
\end{aligned} 
\end{equation}
then 
\begin{equation}\label{30}
\begin{aligned}
&\bigg|\int_{t_1}^{t_2}e^{\mathrm{i}H(\xi,s)}\big( I_{k_1,k_2,k_3}(\xi,s)-\tilde{c}s^{-1}\xi|\xi|^{1-\alpha}\widehat{f_{k_1}}(\xi,s)\widehat{f_{k_2}}(\xi,s)\widehat{f_{k_3}}(-\xi,s)\\
&\quad\quad\quad\quad\quad\quad\quad\quad\quad\quad\quad-\tilde{c}
s^{-1}\xi|\xi|^{1-\alpha}\widehat{f_{k_1}}(\xi,s)\widehat{f_{k_2}}(-\xi,s)\widehat{f_{k_3}}(\xi,s)\\
&\quad\quad\quad\quad\quad\quad\quad\quad\quad\quad\quad-\tilde{c}s^{-1}\xi|\xi|^{1-\alpha}\widehat{f_{k_1}}(-\xi,s)\widehat{f_{k_2}}(\xi,s)\widehat{f_{k_3}}(\xi,s)\big)\,\diff s\bigg|\\
&\quad\quad\quad\quad\quad\quad\quad\quad\quad\quad\quad\lesssim\epsilon_1^32^{-2p_0m}2^{-10k_+}.
\end{aligned} 
\end{equation}
\end{theorem}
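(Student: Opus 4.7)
The plan is to treat \eqref{30} as a \emph{multilinear oscillatory integral} and perform space--time resonance analysis on the phase $\Phi(\xi,\eta,\sigma)$, extracting the stationary--phase contribution at the space--time resonant set and matching it exactly with the three correction terms that appear in \eqref{30}. First I would compute the critical set $\nabla_{\eta,\sigma}\Phi=0$ explicitly: since $\partial_\eta\Phi=(\alpha+1)(|\xi-\eta-\sigma|^\alpha-|\eta|^\alpha)$ and $\partial_\sigma\Phi=(\alpha+1)(|\xi-\eta-\sigma|^\alpha-|\sigma|^\alpha)$, critical points satisfy $|\xi-\eta-\sigma|=|\eta|=|\sigma|$. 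Imposing $\Phi=0$ as well singles out (for $\xi>0$) the three points $(\eta,\sigma)=(\xi,\xi),(\xi,-\xi),(-\xi,\xi)$, producing respectively the triple products $\widehat f(-\xi)\widehat f(\xi)\widehat f(\xi)$, $\widehat f(\xi)\widehat f(\xi)\widehat f(-\xi)$, $\widehat f(\xi)\widehat f(-\xi)\widehat f(\xi)$ visible in \eqref{30}. At each of these points the Hessian of $\Phi$ has vanishing diagonal entries but off-diagonal entry $\alpha(\alpha+1)|\xi|^{\alpha-1}\neq 0$, so $|\det D^2\Phi|=\alpha^2(\alpha+1)^2|\xi|^{2(\alpha-1)}$; this is precisely the factor that, together with the prefactor $\xi$ and the $2\pi/s$ from the $2$D stationary phase formula, produces the coefficient $\tilde c\,s^{-1}\xi|\xi|^{1-\alpha}$.

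Next I would introduce a finer dyadic decomposition on the integrand of $I_{k_1,k_2,k_3}$ based on the size of the phase and of its gradient: for parameters $\ell,\ell_1,\ell_2\in\Z_{\leq 0}$ write
\[
I_{k_1,k_2,k_3}=\sum_{\ell,\ell_1,\ell_2} I^{\ell,\ell_1,\ell_2}_{k_1,k_2,k_3},
\]
where the summand is localized by $\psi_\ell(2^{m-\delta m}\Phi)\psi_{\ell_1}(2^{m/2}\partial_\eta\Phi)\psi_{\ell_2}(2^{m/2}\partial_\sigma\Phi)$, with $\delta>0$ small (depending on $p_0$). I would then treat three regimes. \textbf{(a) Non--time--resonant regime:} when $|\Phi|\gtrsim 2^{-m+\delta m}$, integrate by parts in $s$ using $\partial_s(e^{is\Phi}/(i\Phi))=e^{is\Phi}$; the boundary contributions gain the decay $2^{-\delta m}$ and the $\partial_s\widehat f$ terms, rewritten via \eqref{17}, produce a quintic expression that is bounded by the a priori estimates \eqref{23} together with the dispersive decay \eqref{7}. \textbf{(b) Time--resonant but not space--resonant regime:} when $|\Phi|\lesssim 2^{-m+\delta m}$ but $|\nabla_{\eta,\sigma}\Phi|\gtrsim 2^{-m/2+\delta m}$, integrate by parts in $\eta$ or $\sigma$, each time gaining $2^{-(m/2-\delta m)}$ and spending at worst one derivative on an $\widehat f_{k_j}$ (controlled in $L^2$ by the $H^{1,1}$ bound on $f$).

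The main work, \textbf{(c) the space--time resonant regime}, is where the correction terms emerge: localize $(\eta,\sigma)$ to a ball of radius $\lambda\sim 2^{-m/2+\delta m}|\xi|^{(1-\alpha)/2}$ around each of the three resonances and Taylor--expand the amplitude $\widehat{f_{k_1}}\widehat{f_{k_2}}\widehat{f_{k_3}}$ to zeroth order at the resonant point; the stationary phase formula in two variables gives precisely one of the three terms subtracted in \eqref{30}, and the error produced by replacing the amplitude by its value at the resonance is estimated using $\|\partial\widehat{f_l}\|_{L^2}\lesssim \epsilon_1 2^{p_0 m}2^{-l}$ from \eqref{23}, losing a factor of $\lambda$. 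The second--order stationary phase remainder is $O(s^{-2})$, integrable in $s$ over $[2^m-1,2^{m+1}]$.

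Finally I would sum over $\ell,\ell_1,\ell_2$ and verify that the constraints \eqref{29} allow all the geometric series to converge with the required $2^{-2p_0 m}2^{-10k_+}$ gain. \textbf{The principal obstacle} is the weakness of the dispersion as $\alpha\to 0^-$: the Hessian determinant degenerates like $|\xi|^{2(\alpha-1)}$, and the stationary phase contribution involves the large factor $|\xi|^{1-\alpha}$; this is what forces the restriction $p_0\leq-\alpha/100$ and necessitates working with the sharp low-frequency cutoff $\min(k_1,k_2,k_3)\geq -4m$ from \eqref{29}. A secondary technical point is that the off-diagonal structure of the Hessian means one cannot directly diagonalize before applying stationary phase; one must either perform a rotation of coordinates or apply a two--variable Morse lemma to justify the asymptotic expansion uniformly in $k$ and in the location of the resonance.
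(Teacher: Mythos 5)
Your overall strategy---space--time resonance analysis: integrate by parts in $s$ where $\Phi$ is bounded below, integrate by parts in $(\eta,\sigma)$ where $\nabla_{\eta,\sigma}\Phi$ is bounded below, and extract the asymptotic main term near the three resonances $(\eta,\sigma)=(\xi,\xi),(\xi,-\xi),(-\xi,\xi)$ by a quadratic Taylor expansion of the phase and an explicit Fresnel integral---is exactly the paper's strategy, and your identification of the resonant points, of the off-diagonal Hessian of size $\alpha(\alpha+1)|\xi|^{\alpha-1}$, and of the resulting coefficient $\tilde c\,s^{-1}\xi|\xi|^{1-\alpha}$ is correct. The paper organizes the case analysis by the relative sizes of $k,k_1,k_2,k_3$ and by the signs of the three inputs rather than by a dyadic localization in $|\Phi|$ and $|\nabla_{\eta,\sigma}\Phi|$, but for inputs at comparable frequencies the two bookkeepings are essentially equivalent.

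There is, however, a genuine gap in your regime (b). The constraints \eqref{29} allow $\min(k_1,k_2,k_3)$ to be as low as roughly $-(1+20p_0)m$, and in that range integration by parts in $\eta$ or $\sigma$ is \emph{not} affordable: whenever the derivative falls on the low-frequency profile it costs $\|\partial\widehat{f_{k_j}}\|_{L^2}\lesssim\epsilon_1 2^{p_0m}2^{-k_j}$, a loss of up to $2^{(1+20p_0)m}$, while the gain $(s|\partial_\eta\Phi|)^{-1}\approx 2^{-m}2^{-\alpha\min(k_1,k_2)}$ yields a net factor $2^{-m}2^{-(\alpha+1)\min(k_1,k_2)}$ that can exceed $1$ (it behaves like $2^{20p_0m}$ as $\alpha\to 0^-$), so the argument does not close uniformly in $\alpha\in(-1,0)$. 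Worse, your description of the time-resonant set as a neighborhood of three isolated points is only valid for comparable frequencies: for sign configurations of type $(+,+,-)$ with $|\eta|\approx|\sigma|$ large and $\xi-\eta-\sigma$ near a low output frequency, the phase vanishes identically on the whole line $\{\eta+\sigma=0\}$, which lies in your regime (b) (since $\partial_\eta\Phi\neq 0$ there) and is therefore mishandled. This is precisely why the paper splits off the case $\min(k_1,k_2,k_3)\leq -\tfrac{(1-20p_0)m}{2(\alpha+1)}$ into a separate proposition in which no integration by parts in $\eta$ or $\sigma$ is ever performed: it excises a strip $|\eta+\sigma|\leq 2^{-(1+20p_0)m}$ around the degenerate resonant line (estimated directly by its measure), proves the quantitative lower bound \eqref{70} for $|\Phi|$ on its complement, and integrates by parts only in $s$. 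Without an analogue of this degenerate-resonance analysis your scheme breaks down for $\alpha$ close to $0$.
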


\subsection{Proof of Theorem \ref{th:reduce}}
We divide the proof into several propositions.
\begin{proposition}\label{le:1}
 Let \(k,k_1,k_2,k_3\in\Z\) and \(k_1,k_2,k_3\in[k-20,k+20]\), then the bound \eqref{30} holds.	
\end{proposition}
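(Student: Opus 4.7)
In this "comparable frequencies" regime $k_1,k_2,k_3\in[k-20,k+20]$, the trilinear phase $\Phi(\xi,\eta,\sigma)=\omega(\xi)-\omega(\xi-\eta-\sigma)-\omega(\eta)-\omega(\sigma)$ with $\omega(\zeta)=|\zeta|^\alpha\zeta$ admits space-time resonant points in $(\eta,\sigma)$: since $\omega'$ is even, stationarity forces $|\xi-\eta-\sigma|=|\eta|=|\sigma|=|\xi|$, and the ones at which $\Phi$ also vanishes are exactly $(\eta,\sigma)\in\{(\xi,-\xi),(-\xi,\xi),(\xi,\xi)\}$, which are in one-to-one correspondence with the three cubic correction terms subtracted in \eqref{30}. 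A direct computation gives a non-degenerate Hessian of saddle type at each of these points, with $|\det\Phi''|=[\alpha(\alpha+1)]^2|\xi|^{2(\alpha-1)}$.

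The plan is a standard space-time resonance decomposition. Around each resonant point $(\eta_0,\sigma_0)$ introduce dyadic cutoffs $\chi_\ell(\eta-\eta_0,\sigma-\sigma_0)$ localizing at scale $2^\ell$, so that on the support of $\chi_\ell$ one has $|\Phi|\sim|\xi|^{\alpha-1}2^{2\ell}$ and $|\nabla_{\eta,\sigma}\Phi|\sim|\xi|^{\alpha-1}2^\ell$; on the complement of all three resonant neighbourhoods $|\nabla_{\eta,\sigma}\Phi|$ is comparable to $|\xi|^\alpha$. Fix a threshold $\ell_*$ by $s|\xi|^{\alpha-1}2^{2\ell_*}\sim 1$. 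Shells with $2^\ell\gtrsim 2^{\ell_*}$ are non-resonant on the time scale $s$, shells with $2^\ell\ll 2^{\ell_*}$ require stationary-phase extraction of the correction, and the off-resonant region is handled by spatial integration by parts.

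For the non-resonant shells, writing $e^{\mathrm{i}s\Phi}=(\mathrm{i}\Phi)^{-1}\partial_s e^{\mathrm{i}s\Phi}$ and integrating by parts in $s$, the gain $|\Phi|^{-1}\lesssim |\xi|^{1-\alpha}2^{-2\ell}$ combines with the $L^\infty$-bound on $\widehat{f_{k_j}}$ in \eqref{23} for the boundary terms, while the bulk term with $\partial_s\widehat{f}=-\mathrm{i}(2\pi)^{-1}I$ is controlled via \eqref{26} with the same gain. In the region far from all three critical points, integrate by parts in $\eta$ or $\sigma$, converting the $L^\infty$ loss into the $\|\partial\widehat{f_{k_j}}\|_{L^2}$ bound from \eqref{23}. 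For shells with $2^\ell\ll 2^{\ell_*}$, apply stationary-phase expansion: the leading term
\[\frac{2\pi}{s\,|\det\Phi''|^{1/2}}\,\xi\,\widehat{f_{k_1}}(\zeta_0)\widehat{f_{k_2}}(\eta_0)\widehat{f_{k_3}}(\sigma_0),\]
summed over the three critical points, reproduces exactly $\tilde{c}\,s^{-1}\xi|\xi|^{1-\alpha}$ times the three cubic expressions in \eqref{30} (the saddle-type signature of $\Phi''$ supplying the sign needed so that the prefactor is $2\pi/[\alpha(\alpha+1)]$ rather than its absolute value), and the stationary-phase remainder is $O(s^{-2})$ with constants controlled by $H^{1,1}$-type norms of $\widehat{f_{k_j}}$, hence $s$-integrable after summation in $k_j$.

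The main obstacle is the careful balancing of the threshold $\ell_*$ so that both halves of the argument close with the surplus $2^{-2p_0m}$ demanded by \eqref{30} after summing over all shells $\ell$ and frequency indices $k_j$, uniformly in $\alpha\in(-1,0)$ and for $|\xi|$ ranging down to $2^{-4m}$; in particular the choice of $p_0\le -\alpha/100$ is needed to absorb the factors of $2^{-(1-\alpha)k/2}$ that appear from the Hessian determinant at low frequencies. The second delicate point is verifying that the stationary-phase coefficient equals exactly $\tilde{c}=2\pi/[\alpha(\alpha+1)]$ with the correct sign issued from the saddle signature of $\Phi''$: this is precisely the identity that justifies the form of the correction $H(\xi,t)$ entering the definition of $g$.
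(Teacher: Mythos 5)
Your overall picture is the right one and matches the skeleton of the paper's argument: the three space--time resonant points $(\eta,\sigma)\in\{(\xi,-\xi),(-\xi,\xi),(\xi,\xi)\}$ do correspond to the three subtracted cubic terms, the Hessian computation is correct, and the extraction of the coefficient $\tilde c$ at the scale $s|\xi|^{\alpha-1}2^{2\ell_*}\sim 1$ is exactly what the paper does in Step 2 of Lemma \ref{le:2} (via the quadratic approximation of $\Phi$ and an explicit Fresnel integral). However, two steps of your scheme have genuine gaps. First, you have missed a fourth stationary point: since $\omega'$ is even, $\nabla_{\eta,\sigma}\Phi$ also vanishes at $(\eta,\sigma)=(\xi/3,\xi/3)$, where $\Phi=|\xi|^{\alpha}\xi(1-3^{-\alpha})\neq 0$. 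This point lies in the support of the integrand in the regime $k_1,k_2,k_3\in[k-20,k+20]$ and is far from your three resonant neighbourhoods, so your claim that $|\nabla_{\eta,\sigma}\Phi|\sim|\xi|^{\alpha}$ on their complement is false, and the spatial integration by parts you propose there degenerates. The paper sidesteps this by first splitting $I_{k_1,k_2,k_3}$ according to the signs $(\iota_1,\iota_2,\iota_3)$ of the three inputs: the point $(\xi/3,\xi/3)$ lives in the $(+,+,+)$ piece, where \eqref{41} gives the uniform lower bound $|\Phi|\gtrsim 2^{(\alpha+1)k}$ on the whole support and one integrates by parts in $s$ (Lemma \ref{le:3}); only the pieces $(+,+,-),(+,-,+),(-,+,+)$, each containing exactly one resonant point, are treated by stationary phase.

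Second, your treatment of the shells $2^{\ell}\gtrsim 2^{\ell_*}$ around a resonant point does not close. On an annulus of radius $2^{\ell}$ about $(\xi,\xi)$ one only has $|\Phi|\lesssim |\xi|^{\alpha-1}2^{2\ell}$, not $\gtrsim$: the zero set of $\Phi\approx\alpha(\alpha+1)|\xi|^{\alpha-1}(\xi-\eta)(\xi-\sigma)$ consists of the two lines $\eta=\xi$ and $\sigma=\xi$, which cross every such annulus, so the factor $|\Phi|^{-1}$ produced by integration by parts in $s$ is not controlled there. Even where it is, the boundary term is of size $2^{k}\cdot 2^{2\ell}\cdot |\xi|^{1-\alpha}2^{-2\ell}\,\|\widehat{f}\|_{L^\infty}^3\sim \epsilon_1^3 2^{(2-\alpha)k}$, which carries no decay in $m$ and cannot sum to $\epsilon_1^3 2^{-2p_0m}$. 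The paper instead uses the two--parameter localization $|\xi-\eta|\sim 2^{l_1}$, $|\xi-\sigma|\sim 2^{l_2}$ and, for $l_2\geq\max(l_1,\bar l+1)$, the gradient lower bound $|\partial_\eta\Phi|\gtrsim 2^{l_2}2^{(\alpha-1)k}$ of \eqref{33}, integrating by parts \emph{twice} in $\eta$ and estimating the output with Lemma \ref{symbol estimate} against $\|\partial\widehat{f_{k_1}}\|_{L^2}$ and the dispersive $L^\infty$ bound in \eqref{23}; this is where the decay in $m$ actually comes from. A smaller point: with only $\|\partial\widehat{f_l}\|_{L^2}\lesssim \epsilon_1 2^{p_0m}2^{-l}$ available, the stationary-phase remainder at the bottom scale cannot be $O(s^{-2})$; the paper obtains only $2^{-m}2^{-2p_0m}$, via the H\"older-$\frac12$ continuity of $\widehat f$ and an $O(N^{-1/2})$ error in the Fresnel integral, which suffices.
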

 It is easy to check \eqref{30} if \(k\leq -m/2\). Hence we will assume \(k\geq -m/2\) in the following. 
Without loss of generality, we may assume that \(\xi>0\) and \(\xi\in[2^k,2^{k+1}]\). We split the integral \(I_{k_1,k_2,k_3}\) as follows:
\begin{align*}
I_{k_1,k_2,k_3}(\xi,s)=\sum_{\iota_1,\iota_2,\iota_3\in\{+,-\}}I_{k_1,k_2,k_3}^{\iota_1,\iota_2,\iota_3}(\xi,s),
\end{align*}
with
\begin{align*}
I_{k_1,k_2,k_3}^{\iota_1,\iota_2,\iota_3}(\xi,s)=\xi\int_{\R^2}e^{it\Phi(\xi,\eta,\sigma)}\widehat{f_{k_1}^{\iota_1}}(\xi-\eta-\sigma,s)\widehat{f_{k_2}^{\iota_2}}(\eta,s)\widehat{f_{k_3}^{\iota_3}}(\sigma,s)\,\diff\eta\diff\sigma,
\end{align*}
where \(\widehat{f_{l}^{\iota}}(\mu):=\widehat{f_{l}}(\mu)1_{\iota}(\mu),1_+:=1_{[0,\infty)},1_-:=1_{(-\infty,0]}\). We first observe that \(I_{k_1,k_2,k_3}^{-,-,-}(\xi,s)=0\), so that  to finish the proof of \eqref{30} under the assumption of Proposition \ref{le:1}, we are then left to show the following two lemmas:
\begin{lemma}\label{le:2} It holds that
	\begin{equation}\label{31}
	\begin{aligned}
	&\quad\bigg|I_{k_1,k_2,k_3}^{+,+,-}(\xi,s)-\tilde{c}s^{-1}\xi|\xi|^{1-\alpha}\widehat{f_{k_1}}(\xi,s)\widehat{f_{k_2}}(\xi,s)\widehat{f_{k_3}}(-\xi,s)\bigg|\\
	&\quad+\bigg|I_{k_1,k_2,k_3}^{+,-,+}(\xi,s)-\tilde{c}s^{-1}\xi|\xi|^{1-\alpha}\widehat{f_{k_1}}(\xi,s)\widehat{f_{k_2}}(-\xi,s)\widehat{f_{k_3}}(\xi,s)\bigg|\\
	&\quad+\bigg|I_{k_1,k_2,k_3}^{-,+,+}(\xi,s)-\tilde{c}s^{-1}\xi|\xi|^{1-\alpha}\widehat{f_{k_1}}(-\xi,s)\widehat{f_{k_2}}(\xi,s)\widehat{f_{k_3}}(\xi,s)\bigg|\\
	&\lesssim \varepsilon_1^32^{-m}2^{-3p_0m}2^{-10k_+}.
	\end{aligned}
	\end{equation}
\end{lemma}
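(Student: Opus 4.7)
The three terms in \eqref{31} are handled symmetrically, so I concentrate on the $(+,+,-)$ contribution; the resonant expression on the right hand side is nothing but the leading-order $s^{-1}$ term produced by two-dimensional stationary phase on $I^{+,+,-}_{k_1,k_2,k_3}(\xi,s)$, and \eqref{31} quantifies the size of the stationary phase remainder. On the support $\{\eta>0,\ \sigma<0,\ \xi-\eta-\sigma>0\}$ forced by the sign cutoffs, and assuming without loss of generality $\xi\in[2^k,2^{k+1}]$ with $k\ge -m/2$, the phase reads $\Phi=\xi^{\alpha+1}-(\xi-\eta-\sigma)^{\alpha+1}-\eta^{\alpha+1}+|\sigma|^{\alpha+1}$. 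Solving $\partial_\eta\Phi=\partial_\sigma\Phi=0$ gives $\xi-\eta-\sigma=\eta$ and $\xi-\eta-\sigma=-\sigma$, hence the unique critical point $(\eta_*,\sigma_*)=(\xi,-\xi)$, at which the three frequency arguments $(\xi-\eta-\sigma,\eta,\sigma)=(\xi,\xi,-\xi)$ reproduce exactly the resonant triple in \eqref{31}. A direct computation gives a nondegenerate Hessian with $|\det\Phi''|\sim\alpha^2(\alpha+1)^2\xi^{2(\alpha-1)}$ and constant signature.

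I then introduce a smooth cutoff $\chi$ localized at a carefully chosen scale $\delta$ around $(\eta_*,\sigma_*)$ -- with $\delta$ slightly larger than $s^{-1/2}2^{(1-\alpha)k}$, so that the quadratic phase $s\Phi''\delta^2$ already exceeds one but the cubic remainder $s\cdot 2^{(\alpha-3)k}\delta^3$ remains small -- and split $I^{+,+,-}_{k_1,k_2,k_3}=I_{\mathrm{near}}+I_{\mathrm{far}}$. On the far region the gradient satisfies $|\nabla_{\eta,\sigma}\Phi|\gtrsim 2^{(\alpha-1)k}\delta$, so repeated non-stationary integrations by parts in $\eta$ and $\sigma$, distributing the derivatives on the cutoff and on the Fourier profiles via the $L^\infty$ and $\partial\widehat{f_l}\in L^2$ bounds of \eqref{23}, yield a contribution bounded by $\varepsilon_1^3 s^{-1-3p_0}2^{-10k_+}$. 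On the near region I freeze each $\widehat{f_{k_j}}$ at the critical point -- the freezing error being controlled by $\|\partial\widehat{f_{k_j}}\|_{L^2}\delta^{1/2}$ through Cauchy--Schwarz on the $\delta$-annuli -- replace $\Phi$ by its quadratic approximation at $(\eta_*,\sigma_*)$, and evaluate the resulting Gaussian integral explicitly. The leading term $\frac{2\pi}{s|\det\Phi''|^{1/2}}e^{\mathrm{i}\pi\,\mathrm{sgn}(\Phi'')/4}\,\xi\,\widehat{f_{k_1}}(\xi)\widehat{f_{k_2}}(\xi)\widehat{f_{k_3}}(-\xi)$ matches the coefficient $\tilde c\, s^{-1}\xi|\xi|^{1-\alpha}$ after substituting the Hessian determinant computed above.

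The analogous computations for $(+,-,+)$ and $(-,+,+)$ yield stationary points $(-\xi,\xi)$ and $(\xi,\xi)$ respectively, with identical Hessian structure and the same three-step analysis, and reproduce the remaining two resonant expressions in \eqref{31}. The main obstacle is calibrating $\delta$ so that the far-region integration-by-parts gain, the amplitude-freezing error, and the cubic phase-remainder error all individually beat the $s^{-1-3p_0}$ threshold: the Hessian scale $2^{(\alpha-1)k}$ degenerates as $k\to-\infty$ (with $k=-m/2$ the worst case allowed by the reduction), the $H^1$ derivative bound in \eqref{23} carries a loss $2^{p_0m}$, and the $L^\infty$ bound on $\widehat{f_l}$ must be leveraged via Cauchy--Schwarz on thin annuli to avoid wasting the low-frequency support. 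A secondary delicate point is the sign/orientation bookkeeping ensuring that the three resonant contributions, as assembled from the Hessian signatures at the three stationary points, combine with the correct real constant $\tilde c=2\pi/[\alpha(\alpha+1)]$.
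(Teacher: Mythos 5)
Your proposal is correct and follows essentially the same route as the paper: the paper also splits around the stationary point at a scale $2^{\bar l}$ with $2^{\bar l}\gtrsim 2^{(1-\alpha)k/2}2^{-49m/100}$, kills the far region by repeated integration by parts in $\eta$ using $|\partial_\eta\Phi|\gtrsim 2^{(\alpha-1)k}|\xi-\sigma|$, and on the near region freezes the profiles (with the same $\|\partial\widehat f\|_{L^2}\delta^{1/2}$ Cauchy--Schwarz error), replaces $\Phi$ by $\alpha(\alpha+1)|\xi|^{\alpha-1}(\xi-\eta)(\xi-\sigma)$, and evaluates the resulting Gaussian integral $\int e^{-\mathrm ixy}\varphi(x/N)\varphi(y/N)=2\pi+\bigO(N^{-1/2})$ to produce the constant $\tilde c\,s^{-1}\xi|\xi|^{1-\alpha}$. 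The only slip is typographical: the critical scale determined by your own calibration $s\Phi''\delta^2\gtrsim 1$ is $\delta\sim s^{-1/2}2^{(1-\alpha)k/2}$, not $s^{-1/2}2^{(1-\alpha)k}$.
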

\begin{lemma}\label{le:3} 
	We have
	\begin{align}\label{32}
	\bigg|\int_{t_1}^{t_2}e^{\mathrm{i}H(s,\xi)}I_{k_1,k_2,k_3}^{\iota_1,\iota_2,\iota_3}(\xi,s)\,\diff s\bigg|
	\lesssim \varepsilon_1^32^{-2p_0m}2^{-10k_+},
	\end{align}
	for \((\iota_1,\iota_2,\iota_3)\in \{(+,+,+),(+,-,-),(-,+,-),(-,-,+)\}\).
\end{lemma}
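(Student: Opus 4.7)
The plan is to establish \eqref{32} by integrating by parts in time, exploiting the fact that on each of the four sign patterns $(\iota_1,\iota_2,\iota_3)\in\{(+,+,+),(+,-,-),(-,+,-),(-,-,+)\}$ the phase $\Phi$ stays bounded away from zero. To see this, I would use the strict concavity of $\phi(x):=x^{1+\alpha}$ on $[0,\infty)$ (which holds because $1+\alpha\in(0,1)$) together with $\phi(0)=0$ to obtain strict subadditivity of $\phi$. Taking $\xi\in[2^k,2^{k+1}]$ without loss of generality, for $(+,+,+)$ one has $\Phi=\phi(\xi)-\phi(\xi-\eta-\sigma)-\phi(\eta)-\phi(\sigma)<0$, while for each of the three remaining patterns exactly one of the momenta $\mu_1=\xi-\eta-\sigma,\mu_2=\eta,\mu_3=\sigma$ is positive and a rearrangement yields $\Phi=\phi(\xi)+\phi(|\mu_j|)+\phi(|\mu_\ell|)-\phi(\xi+|\mu_j|+|\mu_\ell|)>0$. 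Under the hypothesis $k_1,k_2,k_3\in[k-20,k+20]$ of Proposition~\ref{le:1} all momenta have size $\sim 2^k$, which promotes these strict inequalities to the quantitative lower bound $|\Phi|\gtrsim 2^{(1+\alpha)k}$.

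Next I would integrate by parts in $s$ in \eqref{32}. Writing $\Psi(s)=H(\xi,s)+s\Phi$ and using the $Z$-norm bound from \eqref{23}, one sees $|\partial_s H|\lesssim \varepsilon_1^2\,2^{(2-\alpha)k} s^{-1}$; combined with $s\geq 2^{m-1}$, $k\leq p_0 m$ and $p_0\leq -\alpha/100$, this forces $|\partial_s H|\ll|\Phi|$, so $|\Psi'|\sim 2^{(1+\alpha)k}$. The identity $e^{i\Psi}=\partial_s(e^{i\Psi})/(i\Psi')$ then produces three contributions: a boundary term at $s=t_1,t_2$, a bulk term where $\partial_s$ hits the product $G=\widehat{f_{k_1}^{\iota_1}}(\xi-\eta-\sigma)\widehat{f_{k_2}^{\iota_2}}(\eta)\widehat{f_{k_3}^{\iota_3}}(\sigma)$, and a bulk term proportional to $\partial_s^2 H/(\Psi')^2$.

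The boundary is controlled by $|\Psi'|^{-1}|\xi|\cdot\bigl|\int G\,d\eta\,d\sigma\bigr|$, and the trilinear convolution is estimated by the $Z$-norm of the three factors together with the $\sim 2^{2k}$ measure of the domain of $(\eta,\sigma)$ in \eqref{21}. In the bulk with $\partial_s G$ one substitutes the cubic identity \eqref{17} for $\partial_s\widehat{f_{k_j}}$ to get a quintic frequency integral, which is then controlled using the dispersive decay \eqref{7} (and \eqref{6.5} in appropriate frequency regimes) to absorb the length $\sim 2^m$ of the $s$-interval. The last term is treated analogously using the pointwise bound $|\partial_s^2 H|\lesssim \varepsilon_1^3\,2^{(2-\alpha)k} s^{-2}$ coming from substituting \eqref{17} into $\partial_s H$.

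The main obstacle is the bulk term with $\partial_s G$: once the cubic formula \eqref{17} is inserted, one obtains an inner oscillatory integral whose own phase may be resonant, and care is required so that these inner resonances do not re-introduce the very $|\widehat f|^2\widehat f$ corrections that the $e^{iH}$ modulation in \eqref{18} was designed to remove. I would handle this by splitting the inner integration according to the size of its own phase, applying a second IBP in $s$ in the non-resonant regime and using $Z$-norm together with dispersive bounds in the remaining regime, and checking that any residual quintic contribution of the shape $|\widehat f(\xi,s)|^4\widehat f(\xi,s)$ is absorbed by the extra smallness $\varepsilon_1^5$.
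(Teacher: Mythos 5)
Your core strategy coincides with the paper's: on these sign patterns the subadditivity of $x\mapsto x^{\alpha+1}$ gives $|\Phi|\gtrsim 2^{(\alpha+1)\,\mathrm{med}(k_1,k_2,k_3)}\sim 2^{(\alpha+1)k}$ (this is exactly \eqref{41}--\eqref{42}), and one then integrates by parts in $s$. Your bookkeeping of the modulation differs cosmetically (you divide by $\Psi'=\Phi+\partial_sH$ after checking $|\partial_sH|\ll|\Phi|$, whereas the paper divides by $\Phi$ alone and treats the term where $\partial_s$ hits $e^{\mathrm{i}H}$ as a separate bulk contribution $A_0^0$ with symbol $m_4=\partial_sH\cdot m_3$); both are fine. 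The boundary terms are handled as you describe, via $\|\F^{-1}(m_3)\|_{L^1}\lesssim 2^{-(\alpha+1)k}$ and Lemma~\ref{symbol estimate}.

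Where you diverge is the bulk term with $\partial_s G$, which you single out as ``the main obstacle'' and propose to attack by inserting \eqref{17}, splitting the resulting quintic integral by the size of an inner phase, and performing a second integration by parts in $s$. None of this is needed, and the worry about inner resonances reintroducing $|\widehat f|^2\widehat f$ corrections is a red herring: those corrections are only relevant for the genuinely resonant patterns $(+,+,-)$, $(+,-,+)$, $(-,+,+)$ treated in Lemma~\ref{le:2}. Here the paper simply records the crude bound \eqref{49},
\begin{equation*}
\|\partial_s\widehat{f_l}(s)\|_{L^2}\lesssim \epsilon_1^3 2^{-m}2^{3p_0m}2^{-20l_+},
\end{equation*}
which follows directly from \eqref{17} and \eqref{23} (morally $\|u^2\partial_xu\|_{L^2}\lesssim\|u\|_{L^\infty}\|\partial_xu\|_{L^\infty}\|u\|_{L^2}\lesssim\epsilon_1^3 s^{-1}2^{p_0m}$), and then applies Lemma~\ref{symbol estimate} with the split $L^2\times L^2\times L^\infty$, putting the dispersive decay $2^{-m/2}$ on one of the two undifferentiated factors. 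The full factor $2^{-m}$ already present in \eqref{49}, together with the $2^{-m/2}$ dispersive gain and the $2^{-(\alpha+1)k}$ from $1/\Phi$, beats the length $2^m$ of the time interval with room to spare, closing the estimate without any second integration by parts. You should replace your last paragraph by this one-line $L^2$ bound on $\partial_s\widehat{f_l}$; as written, the hardest-looking part of your argument is left as an under-specified sketch of machinery that the problem does not require.
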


\bigskip
We start by showing \eqref{31}.
\begin{proof}[Proof of Lemma \ref{le:2}] We only prove that the third term in LHS of \eqref{31} is bounded by \(\varepsilon_1^32^{-3p_1m}2^{-10k_+}\) and the other two terms may be handled similarly.  

Let \(\bar{l}\) be the smallest integer with the property that 
\begin{align*}
2^{\bar{l}}\geq 2^{(1-\alpha)k/2}2^{-49m/100}.
\end{align*}
Since \(k\geq -m/2\), one has \(\bar{l}\leq k-20\).  We may decompose 
\begin{align*}
I_{k_1,k_2,k_3}^{-,+,+}(\xi,s)=\sum_{l_1,l_2=\bar{l}}^{k+20}J_{l_1,l_2}(\xi,s),
\end{align*}
with
\begin{equation*}
\begin{aligned}
J_{l_1,l_2}(\xi,s)&=\xi\int_{\R^2}e^{\mathrm{i}s\Phi(\xi,\eta,\sigma)}\widehat{f_{k_1}^-}(\xi-\eta-\sigma,s)\widehat{f_{k_2}^+}(\eta,s)
\widehat{f_{k_3}^+}(\sigma,s)\\
&\quad\times\varphi_{l_1}^{(\bar{l})}(\xi-\eta)\varphi_{l_2}^{(\bar{l})}(\xi-\sigma)\,\diff\eta \diff \sigma,
\end{aligned}
\end{equation*}
for any \(l_1,l_2\geq \bar{l}\), and where 
\begin{equation*}
	\begin{aligned}
		\varphi_{k}^{(l)}(x):=\varphi(x/2^k),\  \text{if}\  k=l,
	\end{aligned}
\end{equation*}
and
\begin{equation*}
\begin{aligned}
\varphi_{k}^{(l)}(x):=\varphi(x/2^k)-\varphi(x/2^{k-1}),\  \text{if}\  k\geq l+1.
\end{aligned}
\end{equation*}

\bigskip
{\bf{Step 1: \(l_2\geq \max(l_1,\bar{l}+1)\) or \(l_1\geq \max(l_2,\bar{l}+1)\).}} We only consider the case \(l_2\geq \max(l_1,\bar{l}+1)\), a similar argument may apply to the other case. 

We will show
\begin{align*}
|J_{l_1,l_2}(\xi,s)|\lesssim \epsilon_1^32^{-m}2^{-3p_0m}2^{-10k_{+}}.
\end{align*}
On the support of the integral, one has \(|\xi-\eta-\sigma|\approx|\eta|\approx 2^k\) and \(|\xi-\sigma|\approx 2^{l_2}\), and then one finds 
\begin{equation}\label{33}
\begin{aligned}
|\partial_\eta\Phi(\xi,\eta,\sigma)|
=(\alpha+1)\big||\xi-\eta-\sigma|^{\alpha}-|\eta|^{\alpha}\big|
\gtrsim 2^{l_2}2^{(\alpha-1)k}.
\end{aligned}
\end{equation}
Using the identity
\begin{equation*}
	\begin{aligned}
	e^{\mathrm{i}s\Phi(\xi,\eta,\sigma)}=\frac{\partial_\eta e^{\mathrm{i}s\Phi(\xi,\eta,\sigma)}}{\mathrm{i}s\partial_\eta\Phi(\xi,\eta,\sigma)},
	\end{aligned}
	\end{equation*}
we use integration by parts in \(\eta\) to obtain
\begin{align*}
|J_{l_1,l_2}(\xi,s)|\leq |J_{l_1,l_2,1}(\xi,s)|+|F_{l_1,l_2,1}(\xi,s)|+|G_{l_1,l_2,1}(\xi,s)|,
\end{align*}
where 
\begin{equation*}
\begin{aligned}
J_{l_1,l_2,1}(\xi,s)&=\xi\int_{\R^2}\partial_\eta m_1(\eta,\sigma)e^{\mathrm{i}s\Phi(\xi,\eta,\sigma)}\widehat{f_{k_1}^-}(\xi-\eta-\sigma,s)\widehat{f_{k_2}^+}(\eta,s)
\widehat{f_{k_3}^+}(\sigma,s)\, \diff \eta\diff \sigma,\\
F_{l_1,l_1,1}(\xi,s)&=\xi\int_{\R^2}m_1(\eta,\sigma)e^{\mathrm{i}s\Phi(\xi,\eta,\sigma)}\partial_\eta\widehat{f_{k_1}^-}(\xi-\eta-\sigma,s)\widehat{f_{k_2}^+}(\eta,s)
\widehat{f_{k_3}^+}(\sigma,s) \, \diff \eta\diff \sigma,\\
G_{l_1,l_1,1}(\xi,s)&=\xi\int_{\R^2}m_1(\eta,\sigma)e^{\mathrm{i}s\Phi(\xi,\eta,\sigma)}\widehat{f_{k_1}^-}(\xi-\eta-\sigma,s)\partial_\eta\widehat{f_{k_2}^+}(\eta,s)
\widehat{f_{k_3}^+}(\sigma,s) \, \diff \eta\diff \sigma,
\end{aligned}
\end{equation*}
with
\begin{align}\label{34}
m_1(\eta,\sigma):=\frac{\varphi_{l_1}^{(\bar{l})}(\xi-\eta)\varphi_{l_2}(\xi-\sigma)}{s\partial_\eta\Phi(\xi,\eta,\sigma)}
\varphi_{k_1}^\prime(\xi-\eta-\sigma)\varphi_{k_2}^\prime(\eta)\varphi_{k_3}^\prime(\sigma).
\end{align}

Following \eqref{33} and \eqref{34}, a straightforward calculation shows that 
\begin{equation*}
\begin{aligned}
|\partial_\eta^a\partial_\sigma^bm_1(\eta,\sigma)|&\lesssim (2^{-m}2^{-l_2}2^{(1-\alpha)k})(2^{-al_1}2^{-bl_2})\\
&\quad \times{\bf{1}}_{[0,2^{l_1+4}]}(|\xi-\eta|){\bf{1}}_{[2^{l_2-4},2^{l_2+4}]}(|\xi-\sigma|),
\end{aligned}
\end{equation*}
for \(a,b\in[0,20]\cap\Z\). Hence  
\begin{equation*}
\begin{aligned}
\|\mathcal{F}^{-1}(m_1)\|_{L^1}\lesssim 2^{-m}2^{-l_2}2^{(1-\alpha)k}.
\end{aligned}
\end{equation*}

We first estimate the term \(F_{l_1,l_1,1}\). 
Fix \(\xi\) and \(s\), let 
\begin{equation*}
\begin{aligned}
&\widehat{f}(\theta):=e^{-\mathrm{i}s|\xi-\theta|^\alpha(\xi-\theta)}\partial_\eta\widehat{f_{k_1}^-}(\xi-\theta,s),\\
&\widehat{g}(\eta):=e^{-\mathrm{i}s|\eta|^\alpha\eta}\widehat{f_{k_2}^+}(\eta,s),\\
&\widehat{h}(\sigma):=e^{-\mathrm{i}s|\sigma|^\alpha\sigma}\widehat{f_{k_3}^+}(\sigma,s).
\end{aligned}
\end{equation*}
which  in light of \eqref{23} gives
\begin{equation*}
\begin{aligned}
&\|f\|_{L^2}\lesssim \epsilon_12^{-k}2^{p_0m},\\
&\|g\|_{L^\infty}\lesssim \epsilon_12^{-m/2},\\
&\|h\|_{L^2}\lesssim \epsilon_12^{p_0m}2^{-N_0k_+}.
\end{aligned}
\end{equation*}
It then follows from Lemma \ref{symbol estimate} that
\begin{equation*}
\begin{aligned}
|F_{l_1,l_1,1}(\xi,s)|&\lesssim 2^k\|\mathcal{F}^{-1}(m_1)\|_{L^1}\|f\|_{L^2}\|g\|_{L^\infty}\|h\|_{L^2}\\
&\lesssim \epsilon_1^32^{-3m/2+2p_0m}2^{-l_2}2^{(1-\alpha)k}2^{-N_0k_+}\\
&\lesssim \epsilon_1^32^{-3m/2+2p_0m+49m/100}2^{(1-\alpha)k/2}2^{-N_0k_+}\\
&\lesssim \epsilon_1^32^{-m}2^{-m/200}2^{-10k_+},
\end{aligned}
\end{equation*}
which is stronger than what we need. A similar argument yields 
\begin{align*}
|G_{l_1,l_1,1}(\xi,s)|\lesssim \epsilon_1^32^{-m}2^{-m/200}2^{-10k_+}.
\end{align*}

To estimate the term \(J_{l_1,l_1,1}\), we integrate by parts in \(\eta\) again to deduce
\begin{align*}
|J_{l_1,l_2,1}(\xi,s)|\leq |J_{l_1,l_2,2}(\xi,s)|+|F_{l_1,l_2,2}(\xi,s)|+|G_{l_1,l_2,2}(\xi,s)|,
\end{align*}
in which 
\begin{equation*}
\begin{aligned}
J_{l_1,l_2,2}(\xi,s)&=\xi\int_{\R^2}\partial_\eta m_2(\eta,\sigma)e^{\mathrm{i}s\Phi(\xi,\eta,\sigma)}\widehat{f_{k_1}^-}(\xi-\eta-\sigma,s)\widehat{f_{k_2}^+}(\eta,s)
\widehat{f_{k_3}^+}(\sigma,s)\, \diff \eta\diff \sigma,\\
F_{l_1,l_2,2}(\xi,s)&=\xi\int_{\R^2}m_2(\eta,\sigma)e^{\mathrm{i}s\Phi(\xi,\eta,\sigma)}\partial_\eta\widehat{f_{k_1}^-}(\xi-\eta-\sigma,s)\widehat{f_{k_2}^+}(\eta,s)
\widehat{f_{k_3}^+}(\sigma,s) \, \diff \eta\diff \sigma,\\
G_{l_1,l_2,2}(\xi,s)&=\xi\int_{\R^2}m_2(\eta,\sigma)e^{\mathrm{i}s\Phi(\xi,\eta,\sigma)}\widehat{f_{k_1}^-}(\xi-\eta-\sigma,s)\partial_\eta\widehat{f_{k_2}^+}(\eta,s)
\widehat{f_{k_3}^+}(\sigma,s) \, \diff \eta\diff \sigma,
\end{aligned}
\end{equation*}
with
\begin{align}\label{35}
m_2(\eta,\sigma):=\frac{\partial_\eta m_1(\eta,\sigma)}{s\partial_\eta\Phi(\xi,\eta,\sigma)}.
\end{align}

It follows from \eqref{33} and \eqref{35} that \(m_2\) satisfies the following stronger estimate 
\begin{equation*}
\begin{aligned}
|\partial_\eta^a\partial_\sigma^bm_2(\eta,\sigma)|&\lesssim (2^{-m}2^{-l_1-l_2}2^{(1-\alpha)k})(2^{-m}2^{-l_2}2^{(1-\alpha)k})(2^{-al_1}2^{-bl_2})\\
&\quad\times{\bf{1}}_{[0,2^{l_1+4}]}(|\xi-\eta|){\bf{1}}_{[2^{l_2-4},2^{l_2+4}]}(|\xi-\sigma|),
\end{aligned}
\end{equation*}
for \(a,b\in[0,19]\cap\Z\).
In a similar fashion as \(F_{l_1,l_1,1}(\xi,s)\) and \(G_{l_1,l_1,1}(\xi,s)\), we employ Lemma \ref{symbol estimate} to obtain
\begin{equation*}
\begin{aligned}
|F_{l_1,l_1,2}(\xi,s)|+|G_{l_1,l_1,2}(\xi,s)|
\lesssim \epsilon_1^32^{-m}2^{-m/200}2^{-10k_+}.
\end{aligned}
\end{equation*}
We finally estimate the left term \(J_{l_1,l_1,2}(\xi,s)\) as follows:
 \begin{equation*}
 \begin{aligned}
 |J_{l_1,l_1,2}(\xi,s)|&\lesssim  2^k2^{l_1}2^{l_2}|\partial_\eta m_2|\big\|\widehat{f_{k_1}^-}\big\|_{L^\infty}\big\|\widehat{f_{k_2}^+}\big\|_{L^\infty}\big\|\widehat{f_{k_3}^+}\big\|_{L^\infty}\\
 &\lesssim \epsilon^32^{-m}2^{-l_1-l_2}2^{(1-\alpha)k}2^{-m}2^{(2-\alpha)k}2^{-30k_+}\\
 &\lesssim \epsilon^32^{-m}2^{-m/200}2^{-10k_+}.
 \end{aligned}
 \end{equation*}
 
\bigskip
{\bf{Step 2: \(l_1=l_2=\bar{l}\).}} In this case, it suffices to prove that
\begin{align}\label{36}
\big|J_{\bar{l},\bar{l}}(\xi,s)-\tilde{c}s^{-1}\xi|\xi|^{1-\alpha}\widehat{f_{k_1}^-}(-\xi,s)\widehat{f_{k_2}^+}(\xi,s)\widehat{f_{k_3}^+}(\xi,s)\big|
\lesssim \epsilon_1^32^{-m}2^{-2p_0m}2^{-10k_+}.
\end{align} 
Define
\begin{equation*}
\begin{aligned}
\tilde{J}_{\bar{l},\bar{l}}(\xi,s)&=\xi\int_{\R^2}e^{\mathrm{i}s\alpha(\alpha+1)(\xi-\eta)(\xi-\sigma)/|\xi|^{1-\alpha}}\widehat{f_{k_1}^-}(\xi-\eta-\sigma,s)\widehat{f_{k_2}^+}(\eta,s)
\widehat{f_{k_3}^+}(\sigma,s)\\
&\quad\times\varphi\big(2^{-\bar{l}}(\xi-\eta)\big)\varphi\big(2^{-\bar{l}}(\xi-\sigma)\big)\,\diff\eta \diff \sigma,
\end{aligned}
\end{equation*}
and observe
\begin{align*}
\Phi(\xi,\eta,\sigma)=\alpha(\alpha+1)|\xi|^{\alpha-1}(\xi-\eta)(\xi-\sigma)
+2^{(\alpha-2)k}\bigO\big[(|\xi-\eta|+|\xi-\sigma|)^3\big],
\end{align*} 
when \(|\xi-\eta|+|\xi-\sigma|\leq 2^{k-5}\), 
we then estimate
\begin{equation}\label{38}
\begin{aligned}
|J_{\bar{l},\bar{l}}(\xi,s)-\tilde{J}_{\bar{l},\bar{l}}(\xi,s)|
\lesssim \epsilon^32^m2^{(\alpha-1)k}2^{5\bar{l}}2^{-30k_+}
\lesssim \epsilon^32^{-m}2^{-2m/5}2^{-10k_+}.
\end{aligned}
\end{equation}
Notice that
\begin{equation*}
\begin{aligned}
\big|\widehat{f_{k_1}^+}(\xi+r,s)-\widehat{f_{k_1}^+}(\xi,s)\big|\lesssim 2^{\bar{l}/2}2^{-k}2^{p_0m},\quad \text{for}\ |r|\leq 2^{\bar{l}},
\end{aligned}
\end{equation*}
we thus obtain
\begin{equation*}
\begin{aligned}
&\big|\widehat{f_{k_1}^-}(\xi-\eta-\sigma,s)\widehat{f_{k_2}^+}(\eta,s)
\widehat{f_{k_3}^+}(\sigma,s)-\widehat{f_{k_1}^-}(-\xi,s)\widehat{f_{k_2}^+}(\xi,s)\widehat{f_{k_3}^+}(\xi,s)\big|\\
&\lesssim \epsilon^32^{\bar{l}/2}2^{p_0m}2^{-k}2^{-20k_+},\quad \text{for}\ |\xi-\eta|+|\xi-\sigma|\leq 2^{\bar{l}+4}.
\end{aligned}
\end{equation*}
It then follows that
\begin{equation}\label{39}
\begin{aligned}
&\bigg|\tilde{J}_{\bar{l},\bar{l}}(\xi,s)-\xi\int_{\R^2}e^{\mathrm{i}s\alpha(\alpha+1)(\xi-\eta)(\xi-\sigma)/|\xi|^{1-\alpha}}\widehat{f_{k_1}^-}(-\xi,s)\widehat{f_{k_2}^+}(\xi,s)\widehat{f_{k_3}^+}(\xi,s)\\
&\quad\quad\quad\quad\quad\quad\quad\quad\quad\quad\quad\quad\quad\times\varphi\big(2^{-\bar{l}}(\xi-\eta)\big)\varphi\big(2^{-\bar{l}}(\xi-\sigma)\big)\,\diff\eta \diff \sigma\bigg|\\
&\lesssim \epsilon^32^{\bar{l}/2}2^{p_0m}2^{2\bar{l}}2^{-20k_+}
\lesssim \epsilon^32^{-6m/5}2^{-10k_+}.
\end{aligned}
\end{equation}

One calculates  
\begin{equation*}
\begin{aligned}
\int_{\R^2} e^{-\mathrm{i}xy}e^{-x^2/N^2}e^{-y^2/N^2}\,\diff x \diff y
=\frac{2\pi N}{\sqrt{4N^{-2}+N^2}}=2\pi+\bigO(N^{-1}),
\end{aligned}
\end{equation*}
in which we have used the formula 
\begin{equation*}
\begin{aligned}
\int_\R e^{-ax^2-bx}\,\diff x=e^{b^2/(4a)}\sqrt{\pi/a}, \quad \mathrm{for}\ a,b\in \mathcal{C}, \Re a>0.
\end{aligned}
\end{equation*}
It  follows that
\begin{equation*}
\begin{aligned}
\int_{\R^2} e^{-\mathrm{i}xy}\varphi(x/N)\varphi(y/N)\,\diff x \diff y
=2\pi+\bigO(N^{-1/2}),\quad \text{for}\ N\geq 1.
\end{aligned}
\end{equation*}
We then may estimate 
\begin{equation*}
\begin{aligned}
&\left|\int_{\R^2}e^{\mathrm{i}s\alpha(\alpha+1)(\xi-\eta)(\xi-\sigma)/|\xi|^{1-\alpha}}\varphi\big(2^{-\bar{l}}(\xi-\eta)\big)\varphi\big(2^{-\bar{l}}(\xi-\sigma)\big)\,\diff\eta \diff \sigma+\frac{2\pi|\xi|^{1-\alpha}}{s\alpha(\alpha+1)}\right|\\
&\lesssim 2^{(1-\alpha)k}2^{-m}\big(2^{m/100}\big)^{-1/2}
\lesssim 2^{-m}2^{-m/300}.
\end{aligned}
\end{equation*}
Hence
\begin{equation}\label{40}
\begin{aligned}
&\bigg|\xi\int_{\R^2}e^{\mathrm{i}s\alpha(\alpha+1)(\xi-\eta)(\xi-\sigma)/|\xi|^{1-\alpha}}\widehat{f_{k_1}^-}(-\xi,s)\widehat{f_{k_2}^+}(\xi,s)\widehat{f_{k_3}^+}(\xi,s)\varphi\big(2^{-\bar{l}}(\xi-\eta)\big)\\
&\quad\times\varphi\big(2^{-\bar{l}}(\xi-\sigma)\big)\,\diff\eta \diff \sigma+\frac{2\pi \xi|\xi|^{1-\alpha}}{s\alpha(\alpha+1)}\widehat{f_{k_1}^-}(-\xi,s)\widehat{f_{k_2}^+}(\xi,s)\widehat{f_{k_3}^+}(\xi,s)\bigg|\\
&\lesssim \epsilon^32^k2^{-m}2^{-m/300}2^{-30k_+}
\lesssim \epsilon^32^{-m}2^{-m/300}2^{-10k_+}.
\end{aligned}
\end{equation}

We finally conclude \eqref{36} from  \eqref{38}-\eqref{40}.
\end{proof}

\begin{proof}[Proof of Lemma \ref{le:3}]   
	Since \(\alpha\in(-1,0)\), it is straightforward to check
	\begin{align}\label{41}
	a^{\alpha+1}+b^{\alpha+1}+c^{\alpha+1}-(a+b+c)^{\alpha+1}\gtrsim b^{\alpha+1},
	\end{align}
if \(a\geq b\geq c\in(0,\infty)\). We only present the proof of the case \((\iota_1,\iota_2,\iota_3)=(+,-,-)\), since the other cases can be handled in a similar fashion.
	Use \eqref{41} and recall \(k_1,k_2,k_3\in[k-20,k+20]\), we then have 
	\begin{equation}\label{42}
	\begin{aligned}
	|\Phi(\xi,\eta,\sigma)|\gtrsim 2^{(\alpha+1)k}.
	\end{aligned}
	\end{equation}
	Observing
	\begin{equation*}
	\begin{aligned}
	e^{\mathrm{i}s\Phi(\xi,\eta,\sigma)}=\frac{\partial_se^{\mathrm{i}s\Phi(\xi,\eta,\sigma)}}{\mathrm{i}\Phi(\xi,\eta,\sigma)},
	\end{aligned}
	\end{equation*}
	we integrate by parts in \(s\) to deduce
	\begin{equation}\label{43}
	\begin{aligned}
	&\quad\bigg|\xi\int_{t_1}^{t_2}\int_{\R^2}e^{\mathrm{i}H(\xi,s)}e^{\mathrm{i}s\Phi(\xi,\eta,\sigma)}\widehat{f_{k_1}^{\iota_1}}(\xi-\eta-\sigma,s)\widehat{f_{k_2}^{\iota_2}}(\eta,s)\widehat{f_{k_3}^{\iota_3}}(\sigma,s)
	\,\diff\eta \diff \sigma\diff s\bigg|\\
	&\lesssim  \int_{t_1}^{t_2}\bigg|\xi\int_{\R^2}\frac{e^{\mathrm{i}s\Phi(\xi,\eta,\sigma)}}{\Phi(\xi,\eta,\sigma)}\frac{\diff}{\diff s}\bigg[e^{\mathrm{i}H(\xi,s)}\widehat{f_{k_1}^{\iota_1}}(\xi-\eta-\sigma,s)\widehat{f_{k_2}^{\iota_2}}(\eta,s)\widehat{f_{k_3}^{\iota_3}}(\sigma,s)\bigg]\,\diff\eta \diff \sigma \bigg|\diff s\\
	&\quad+ \sum_{j=1}^2\bigg|\xi\int_{\R^2}\frac{e^{\mathrm{i}H(\xi,s_j)}e^{\mathrm{i}s_j\Phi(\xi,\eta,\sigma)}}{\Phi(\xi,\eta,\sigma)}\widehat{f_{k_1}
		^{\iota_1}}(\xi-\eta-\sigma,s_j)\widehat{f_{k_2}^{\iota_2}}(\eta,s_j)\widehat{f_{k_3}^{\iota_3}}(\sigma,s_j)\,\diff\eta \diff \sigma\bigg|\\
	&=\colon A^0(\xi,s)+\sum_{j=1}^2A_j(\xi,s_j).
	\end{aligned}
	\end{equation} 
	
	We first handle the two terms \(A_1(\xi,s)\) and \(A_2(\xi,s)\).
	Let 
	\begin{equation}\label{44}
	\begin{aligned}
	m_3(\eta,\sigma):=\frac{1}{\Phi(\xi,\eta,\sigma)}\varphi_{k_1}^\prime(\xi-\eta-\sigma)\varphi_{k_2}^\prime(\eta)\varphi_{k_3}^\prime(\sigma).
	\end{aligned}
	\end{equation} 
	It thus follows from \eqref{42} and \eqref{44} that
	\begin{equation*}
	\begin{aligned}
	\|\mathcal{F}^{-1}(m_3)\|_{L^1}\lesssim 2^{-(\alpha+1)k}.
	\end{aligned}
	\end{equation*}
	We therefore, in light of Lemma \ref{symbol estimate}, may estimate
	\begin{equation*}
	\begin{aligned}
	\sup_{s\in[t_1,t_2]}A_j(\xi,s_j)
	\lesssim \varepsilon_1^32^{-\alpha k}2^{(2p_0-1/2)m}2^{-2N_0k_+}
	\lesssim \epsilon_1^32^{-m/4}2^{-10k_+},
	\end{aligned}
	\end{equation*}
	for \(j=1,2\), where we have used \(\alpha\in(-1,0)\) in the last inequality.

	We now come to estimate the term \(A^0(\xi,s)\). For this, we expand \(\diff /\diff s\) to deduce	
	\begin{equation}\label{45}
	\begin{aligned}
	A^0(\xi,s)\lesssim 2^m\sup_{s\in[t_1,t_2]}\big(A_0^0(\xi,s)+A_1^0(\xi,s)+A_2^0(\xi,s)+A_3^0(\xi,s)\big),
	\end{aligned}
	\end{equation}
	where
	\begin{equation}\label{46}
	\begin{aligned}
	A_0^0(\xi,s)&=\bigg|\xi\int_{\R^2}m_4(\eta,\sigma)e^{\mathrm{i}s\Phi(\xi,\eta,\sigma)}\widehat{f_{k_1}^{\iota_1}}(\xi-\eta-\sigma,s)\widehat{f_{k_2}^{\iota_2}}(\eta,s)\widehat{f_{k_3}^{\iota_3}}(\sigma,s)\,\diff\eta \diff \sigma\bigg|,\\
	A_1^0(\xi,s)&= \bigg|\xi\int_{\R^2}m_3(\eta,\sigma)e^{\mathrm{i}s\Phi(\xi,\eta,\sigma)}\partial_s\widehat{f_{k_1}^{\iota_1}}(\xi-\eta-\sigma,s)\widehat{f_{k_2}^{\iota_2}}(\eta,s)\widehat{f_{k_3}^{\iota_3}}(\sigma,s)\,\diff\eta \diff \sigma\bigg|,\\
	A_2^0(\xi,s)&= \bigg|\xi\int_{\R^2}m_3(\eta,\sigma)e^{\mathrm{i}s\Phi(\xi,\eta,\sigma)}\widehat{f_{k_1}^{\iota_1}}(\xi-\eta-\sigma,s)\partial_s\widehat{f_{k_2}^{\iota_2}}(\eta,s)\widehat{f_{k_3}^{\iota_3}}(\sigma,s)\,\diff\eta \diff \sigma\bigg|,\\
	A_3^0(\xi,s)&= \bigg|\xi\int_{\R^2}m_3(\eta,\sigma)e^{\mathrm{i}s\Phi(\xi,\eta,\sigma)}\widehat{f_{k_1}^{\iota_1}}(\xi-\eta-\sigma,s)\widehat{f_{k_2}^{\iota_2}}(\eta,s)\partial_s\widehat{f_{k_3}^{\iota_3}}(\sigma,s)\,\diff\eta \diff \sigma\bigg|,
	\end{aligned}
	\end{equation} 
	with 
	\begin{equation}\label{47}
	\begin{aligned}
	m_4(\eta,\sigma):=\partial_s H(\xi,s)m_3(\eta,\sigma).
	\end{aligned}
	\end{equation}
	
	To control \(A_0^0(\xi,s)\), one observes
	\begin{equation}\label{48}
	\begin{aligned}
	|\partial_s H(\xi,s)|
	\lesssim \epsilon_1^22^{(2-\alpha)k}2^{-m}2^{-20k_+},
	\end{aligned}
	\end{equation} 
	and thus obtains
	\begin{equation*}
	\begin{aligned}
	\|\mathcal{F}^{-1}(m_4)\|_{L^1}\lesssim \epsilon_1^22^{(1-2\alpha)k}2^{-m}2^{-20k_+}.
	\end{aligned}
	\end{equation*}
	We apply Lemma \ref{symbol estimate} again to obtain
	\begin{equation*}
	\begin{aligned}
	\sup_{s\in[t_1,t_2]}A_0^0(\xi,s)
	&\lesssim \epsilon_1^52^{(2-2\alpha)k}2^{(2p_0-3/2)m}2^{-2N_0k_+}2^{-20k_+}\\
	&\lesssim \epsilon_1^52^{-m}2^{-m/4}2^{-10k_+}.
	\end{aligned}
	\end{equation*}

	We next consider the term \(A_1^0(\xi,s)\). For this, in view of \eqref{17}, \eqref{20} and \eqref{23}, we first easily see that 
 \begin{align}\label{49}
	\|\partial_s\widehat{f_l}(s)\|_{L^2}\lesssim \epsilon_1^32^{-m}2^{3p_0m}2^{-20l_+}.
	\end{align}
Let
	\begin{equation*}
	\begin{aligned}
	&\widehat{f}(\theta):=e^{-\mathrm{i}s|\xi-\theta|^\alpha(\xi-\theta)}\partial_s\widehat{f_{k_1}^{\iota_1}}(\xi-\theta,s),\\
	&\widehat{g}(\eta):=e^{-\mathrm{i}s|\eta|^\alpha\eta}\widehat{f_{k_2}^{\iota_2}}(\eta,s),\\
	&\widehat{h}(\sigma):=e^{-\mathrm{i}s|\sigma|^\alpha\sigma}\widehat{f_{k_3}^{\iota_3}}(\sigma,s),
	\end{aligned}
	\end{equation*}
we then use \eqref{23} and \eqref{49} to get
	\begin{equation*}
	\begin{aligned}
	&\|f\|_{L^2}\lesssim \epsilon_12^{3p_0m}2^{-20k_+}2^{-m},\\ &\|g\|_{L^2}
	\lesssim \epsilon_12^{-N_0k_+}2^{p_0m},\\
	&\|h\|_{L^\infty}\lesssim \epsilon_12^{-m/2}.
	\end{aligned}
	\end{equation*}
	Recall \(\alpha\in(-1,0)\), it then follows from Lemma \ref{symbol estimate} that	\begin{equation*}
	\begin{aligned}
	\sup_{s\in[t_1,t_2]}A_1^0(\xi,s)
	&\lesssim \varepsilon_1^32^{-\alpha k}2^{(4p_0-3/2)m}2^{-N_0k_+}2^{-20k_+}\\
	&\lesssim \epsilon_1^32^{-m}2^{-m/4}2^{-10k_+}.
	\end{aligned}
	\end{equation*}
	Similarly, one has
	\begin{equation*}
	\begin{aligned}
	\sup_{s\in[t_1,t_2]}\big(A_2^0(\xi,s)+A_3^0(\xi,s)\big)
	\lesssim \epsilon_1^32^{-m}2^{-m/4}2^{-10k_+}.
	\end{aligned}
	\end{equation*}
		
\end{proof}

\begin{proposition}  Let
	\begin{equation}\label{49.5}
	\begin{aligned}
	&\min(k_1,k_2,k_3)\geq -\frac{(1-20p_0)m}{2(\alpha+1)},\\
	&\max(|k_1-k|,|k_2-k|,|k_3-k|)\geq 21,
	\end{aligned}
	\end{equation}
then the bound \eqref{30} holds.
\end{proposition}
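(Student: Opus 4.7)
First, I would observe that under the hypothesis $\max(|k_1-k|,|k_2-k|,|k_3-k|)\geq 21$, all three resonant correction terms on the right-hand side of \eqref{30} vanish identically: each involves at least one factor $\widehat{f_{k_i}}(\pm\xi,s)$, whose $\xi$-support requires $|\xi|\sim 2^{k_i}$, which is incompatible with $|\xi|\in[2^k,2^{k+1}]$ whenever $|k_i-k|\geq 3$. Thus the task reduces to proving
\begin{equation*}
\bigg|\int_{t_1}^{t_2}e^{\I H(\xi,s)}I_{k_1,k_2,k_3}(\xi,s)\,\diff s\bigg|\lesssim\varepsilon_1^3 2^{-2p_0 m} 2^{-10k_+}.
\end{equation*}

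The plan is to integrate by parts in the time variable $s$, mirroring the proof of Lemma \ref{le:3}. Using
\begin{equation*}
e^{\I s\Phi(\xi,\eta,\sigma)}=\frac{\partial_s e^{\I s\Phi(\xi,\eta,\sigma)}}{\I\Phi(\xi,\eta,\sigma)},
\end{equation*}
I would rewrite the time integral as a boundary contribution at $s=t_1,t_2$ plus an interior term in which $\partial_s$ falls on $e^{\I H}\widehat{f_{k_1}}(\xi-\eta-\sigma)\widehat{f_{k_2}}(\eta)\widehat{f_{k_3}}(\sigma)$. The boundary terms and the interior $\partial_s\widehat{f_{k_i}}$ pieces should then be controlled exactly as the terms $A_j$ and $A_i^0$ appearing in Lemma \ref{le:3}, via the symbol trick of Lemma \ref{symbol estimate}, the bounds \eqref{23}, \eqref{48} and the time-derivative estimate \eqref{49}. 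The essential new input is a lower bound of the form $|\Phi|\gtrsim 2^{(\alpha+1)\min(k_1,k_2,k_3)}$ on the relevant support, which, combined with the first condition in \eqref{49.5}, yields $|\Phi|^{-1}\lesssim 2^{(1-20p_0)m/2}$.

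To establish this lower bound I would split the integral into the eight sign pieces $I^{\iota_1,\iota_2,\iota_3}_{k_1,k_2,k_3}$ and, within each, distinguish regimes by the relative sizes of $k_1,k_2,k_3$ and $k$. When some pair of frequencies is separated (so the corresponding $|\cdot|^{\alpha+1}$ values differ by a fixed factor), the strict concavity of $x\mapsto x^{\alpha+1}$ on $(0,\infty)$ rules out cancellation in $p(\xi)-p(\mu)-p(\eta)-p(\sigma)$ with $p(x)=|x|^\alpha x$; in the borderline case where the two largest frequencies are comparable and of opposite sign, a further dyadic localization in $\bigl||\mu|-|\eta|\bigr|$ (producing an extra small factor) together with the surviving residual of order $2^{(\alpha+1)\min k_i}$ recovers the bound. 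The condition $\max(|k_1-k|,|k_2-k|,|k_3-k|)\geq 21$ rules out the fully resonant configuration $|\mu|=|\eta|=|\sigma|=|\xi|$, which is precisely the one handled separately in Proposition \ref{le:1}.

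The hard part will be the case analysis for this lower bound on $|\Phi|$: tracking which powers survive under each sign combination and relative-size pattern, and verifying that the product of the loss $2^{(1-20p_0)m/2}$ from $|\Phi|^{-1}$ with the time-integration gain $2^{-m}$ and the dispersive gain $2^{-m/2}$ carried by one factor (via the $L^\infty$-bound in \eqref{23}) comfortably beats the target $2^{-2p_0 m}$, using $\min(k_1,k_2,k_3)\geq -(1-20p_0)m/[2(\alpha+1)]$ and $\max(k_1,k_2,k_3)\leq p_0 m/10$ from \eqref{49.5} and \eqref{29}. Once the lower bound is secured, the remaining manipulations are straightforward adaptations of Lemma \ref{le:3}, and \eqref{30} follows.
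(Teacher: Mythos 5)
Your opening observation is correct: when $\max(|k_1-k|,|k_2-k|,|k_3-k|)\geq 21$, each of the three correction terms in \eqref{30} contains a factor $\widehat{f_{k_i}}(\pm\xi,s)$ with $|k_i-k|\geq 21$, which vanishes for $|\xi|\in[2^k,2^{k+1}]$, so the task is indeed to bound $\int_{t_1}^{t_2}e^{\mathrm{i}H}I_{k_1,k_2,k_3}\,\diff s$. But the core of your plan --- integration by parts in $s$ based on a lower bound $|\Phi|\gtrsim 2^{(\alpha+1)\min(k_1,k_2,k_3)}$ --- fails, because that lower bound is false under the hypotheses \eqref{49.5}. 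Take $k_1=k_2=k+30$, $k_3=k$ (admissible for $m$ large); on the support of $I_{k_1,k_2,k_3}$ one can have $\xi-\eta-\sigma=a$, $\eta=-a$, $\sigma=\xi$ with $a\sim 2^{k_1}$, and there
\begin{equation*}
\Phi=\bigl(|\xi|^{\alpha}\xi-|\sigma|^{\alpha}\sigma\bigr)-\bigl(a^{\alpha+1}-a^{\alpha+1}\bigr)=0
\end{equation*}
exactly, on a full two-parameter family inside the integration region. So there is no ``surviving residual of order $2^{(\alpha+1)\min k_i}$'': near $\mu+\eta=0$ one has $|\Phi|\approx(\alpha+1)|\mu+\eta|\,\bigl||\xi_*|^{\alpha}-|\mu_*|^{\alpha}\bigr|\to 0$, and ruling out the fully resonant configuration $|\mu|=|\eta|=|\sigma|=|\xi|$ does not exclude these partial resonances. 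Your fallback (dyadic localization in $\bigl||\mu|-|\eta|\bigr|$) would then have to balance a divergent factor $|\Phi|^{-1}$ against the shrinking measure of the localized region, which is exactly the delicate excision the paper only performs in the \emph{complementary} proposition (via the cutoff $\chi_{k,m}$), and you have not carried it out.

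The paper's proof takes a different route that avoids the issue entirely: it never integrates by parts in time in this regime, but proves the pointwise-in-$s$ bound $|I_{k_1,k_2,k_3}(\xi,s)|\lesssim\epsilon_1^3 2^{-m}2^{-3p_0m}2^{-10k_+}$ by integration by parts in the frequency variable. Although $\Phi$ vanishes on the support, the configuration is not space-resonant: after permuting the three inputs so that two dyadically separated frequencies are paired by the differentiation variable, one gets $|\partial_\eta\Phi|=(\alpha+1)\bigl||\xi-\eta-\sigma|^{\alpha}-|\eta|^{\alpha}\bigr|\gtrsim 2^{\alpha\min(k_1,k_2)}$ (with a variant when $k_1,k_2,k_3$ are mutually comparable but all far from $k$). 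The hypothesis $\min(k_1,k_2,k_3)\geq-(1-20p_0)m/(2(\alpha+1))$ is precisely what makes the resulting loss $2^{-(\alpha+1)\min(k_1,k_2)}\leq 2^{(1/2-10p_0)m}$ affordable against the gain $2^{(2p_0-3/2)m}$ coming from Lemma \ref{symbol estimate} and the bounds \eqref{23}. As written, your key estimate is incorrect and the argument does not close; you should replace the time integration by parts with stationary-phase analysis in $\eta$ (or $\sigma$).
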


\begin{proof}  It suffices to show
	\begin{equation*}
	\begin{aligned}
	| I_{k_1,k_2,k_3}(\xi,s)|
	\lesssim \epsilon_1^32^{-m}2^{-3p_0m}2^{-10k_+}.
	\end{aligned} 
	\end{equation*}

\bigskip
\noindent {\bf{Step 1: \(\max(|k_1-k_2|,|k_1-k_3|,|k_2-k_3|)\geq 5\).}}
Without loss of generality, we may assume that \(|k_1-k_2|\geq5\). On the support  \(|\xi-\eta-\sigma|\in [2^{k_1-2},2^{k_1+2}]\) and \(|\eta|\in [2^{k_2-2},2^{k_2+2}]\), one has
\begin{equation}\label{50}
\begin{aligned}
|\partial_\eta\Phi(\xi,\eta,\sigma)|
=(\alpha+1)\big||\xi-\eta-\sigma|^{\alpha}-|\eta|^{\alpha}\big|
\gtrsim 2^{\alpha\min(k_1,k_2)},
\end{aligned}
\end{equation}
and then integrate by parts in \(\eta\) to control
\begin{align}\label{51}
|I_{k_1,k_2,k_3}(\xi,s)|\leq |J_1(\xi,s)|+|F_1(\xi,s)|+|G_1(\xi,s)|,
\end{align}
where 
\begin{equation}\label{52}
\begin{aligned}
J_1(\xi,s)&=\xi\int_{\R^2}\partial_\eta m_5(\eta,\sigma)e^{\mathrm{i}s\Phi(\xi,\eta,\sigma)}\widehat{f_{k_1}}(\xi-\eta-\sigma,s)\widehat{f_{k_2}}(\eta,s)\widehat{f_{k_3}}(\sigma,s)\, \diff \eta\diff \sigma,\\
F_1(\xi,s)&=\xi\int_{\R^2}m_5(\eta,\sigma)e^{\mathrm{i}s\Phi(\xi,\eta,\sigma)}\partial_\eta\widehat{f_{k_1}}(\xi-\eta-\sigma,s)\widehat{f_{k_2}}(\eta,s)\widehat{f_{k_3}}(\sigma,s) \, \diff \eta\diff \sigma,\\
G_1(\xi,s)&=\xi\int_{\R^2}m_5(\eta,\sigma)e^{\mathrm{i}s\Phi(\xi,\eta,\sigma)}\widehat{f_{k_1}}(\xi-\eta-\sigma,s)\partial_\eta\widehat{f_{k_2}}(\eta,s)\widehat{f_{k_3}}(\sigma,s) \, \diff \eta\diff \sigma,
\end{aligned}
\end{equation}
with
\begin{align}\label{53}
m_5(\eta,\sigma):=\frac{1}{s\partial_\eta\Phi(\xi,\eta,\sigma)}\varphi_{k_1}^\prime(\xi-\eta-\sigma)\varphi_{k_2}^\prime(\eta)\varphi_{k_3}^\prime(\sigma).
\end{align}

Using \eqref{50} and \eqref{53}, one finds that \(m_5\) satisfies the following estimates
\begin{equation}\label{54}
\begin{aligned}
\|\mathcal{F}^{-1}(m_5)\|_{L^1}\lesssim 2^{-m}2^{-\alpha\min(k_1,k_2)},
\end{aligned}
\end{equation}
and
\begin{equation}\label{55}
\begin{aligned}
\|\mathcal{F}^{-1}(\partial_\eta m_5)\|_{L^1}\lesssim 2^{-m}2^{-(\alpha+1)\min(k_1,k_2)}.
\end{aligned}
\end{equation}

Applying Lemma \ref{symbol estimate}, we use \eqref{23} and \eqref{55} to see 
\begin{align*}
|J_1(\xi,s)|
\lesssim \epsilon_1^32^k2^{-(\alpha+1)\min(k_1,k_2)}2^{(2p_0-3/2)m}2^{-N_0\max({k_1}_+,{k_2}_+,{k_3}_+)},
\end{align*}
and instead use \eqref{23} and \eqref{54} to find
\begin{align*}
|F_1(\xi,s)|
\lesssim\epsilon_1^32^k2^{-k_1}2^{-\alpha\min(k_1,k_2)}2^{(2p_0-3/2)m}2^{-N_0\max({k_2}_+,{k_3}_+)},
\end{align*}
and
\begin{align*}
|G_1(\xi,s)|\lesssim\epsilon_1^32^k2^{-k_2}2^{-\alpha\min(k_1,k_2)}2^{(2p_0-3/2)m}2^{-N_0\max({k_1}_+,{k_3}_+)}.
\end{align*}
We finally conclude that
\begin{equation*}
\begin{aligned}
&\quad|J_1(\xi,s)|+|F_1(\xi,s)|+|G_1(\xi,s)|\\
&\lesssim \epsilon_1^32^k2^{-(\alpha+1)\min(k_1,k_2)}2^{(2p_0-3/2)m}2^{-10k_+}
(2^{10\max({k_1}_+,{k_2}_+,{k_3}_+)}+1)\\
&\lesssim\epsilon_1^32^{-m}2^{-5p_0m}2^{-10k_+},
\end{aligned}
\end{equation*}
where we have used \eqref{29} and \eqref{49.5} in the last inequality.

\bigskip
\noindent{\bf{Step 2: \(\max(|k_1-k_2|,|k_1-k_3|,|k_2-k_3|)\leq 4\).}}
In this case  \(\partial_\eta\Phi\neq0\) or \(\partial_\sigma\Phi\neq0\). Without loss of generality, we assume \(\partial_\eta\Phi\neq0\). On the support \(|\xi-\eta-\sigma|\in [2^{k_1-2},2^{k_1+2}]\), \(|\eta|\in [2^{k_2-2},2^{k_2+2}]\) and \(|\sigma|\approx 2^{k_3}\), recalling \(2^{k_1}\approx 2^{k_2}\approx 2^{k_3}\), we have
\begin{equation}\label{56}
\begin{aligned}
|\partial_\eta\Phi(\xi,\eta,\sigma)|
=(\alpha+1)\big||\xi-\eta-\sigma|^{\alpha}-|\eta|^{\alpha}\big|
\gtrsim 2^{\alpha k_2}.
\end{aligned}
\end{equation}
One may use integration by parts in \(\eta\) as \eqref{51}-\eqref{52} to control \(I_{k_1,k_2,k_3}\). Due to \eqref{56}, we instead have 
\begin{equation}\label{57}
\begin{aligned}
\|\mathcal{F}^{-1}(m_5)\|_{L^1}\lesssim 2^{-m}2^{-\alpha k_2},
\end{aligned}
\end{equation}
and
\begin{equation}\label{58}
\begin{aligned}
\|\mathcal{F}^{-1}(\partial_\eta m_5)\|_{L^1}\lesssim 2^{-m}2^{-(\alpha+1)k_2}.
\end{aligned}
\end{equation}
Recall \(2^{k_1}\approx 2^{k_2}\approx 2^{k_3}\) and repeat the arguments of {\bf{Step 2}}  using instead  \eqref{57}-\eqref{58}, we finally conclude that
\begin{equation*}
\begin{aligned}
|J_2(\xi,s)|+|F_2(\xi,s)|+|G_2(\xi,s)|
&\lesssim\epsilon_1^32^k2^{-(\alpha+1)k_2}2^{(2p_0-3/2)m}2^{-N_0{k_2}_+}\\
&\lesssim \epsilon_1^32^{-m}2^{-5p_0m}2^{-10k_+},
\end{aligned}
\end{equation*}
in which we have used the assumption
\begin{equation*}
\begin{aligned}
-(\alpha+1) k_2\leq (\frac{1}{2}-10p_0)m.
\end{aligned}
\end{equation*}
\end{proof}

\begin{proposition} Let
	\begin{equation}\label{58.5}
	\begin{aligned}
	&\min(k_1,k_2,k_3)\leq -\frac{(1-20p_0)m}{2(\alpha+1)},\\
	&\max(|k_1-k|,|k_2-k|,|k_3-k|)\geq 21,
	\end{aligned}
	\end{equation}
then the bound \eqref{30} holds.
\end{proposition}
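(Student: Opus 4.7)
The strategy mirrors Proposition 4.4, with the key twist that integration by parts is carried out in the variable corresponding to the extremely small frequency and the $Z$-norm is used to control it. Because $\max(|k_j-k|)\geq 21$ and $|\xi|\in[2^k,2^{k+1}]$, at least one of $\widehat{f_{k_j}}(\pm\xi)$ vanishes identically, so the three resonant correction terms in \eqref{30} drop out. It therefore suffices to show the pointwise bound $|I_{k_1,k_2,k_3}(\xi,s)|\lesssim\varepsilon_1^3 2^{-m}2^{-3p_0m}2^{-10k_+}$ for $s\in[2^m-1,2^{m+1}]\cap[1,T]$. By symmetry of $I_{k_1,k_2,k_3}$, we may assume $k_3=\min(k_1,k_2,k_3)$, so that $-k_3\geq(1-20p_0)m/(2(\alpha+1))$; the constraint \eqref{29} then forces $\mathrm{med}(k_1,k_2,k_3)\geq -(1+10p_0)m-k_3$.

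The case split parallels Proposition 4.4. If $\max_{i\neq j}|k_i-k_j|\geq 5$, integrate by parts in the variable where $|\partial\Phi|\gtrsim 2^{\alpha\min(k_i,k_j)}$, obtaining symbol estimates analogous to \eqref{54}--\eqref{55}. In the complementary case $k_1\approx k_2\approx k_3$, sign considerations on $(\xi-\eta-\sigma)+\eta+\sigma=\xi$ combined with $\max(|k_j-k|)\geq 21$ guarantee that either $\partial_\eta\Phi$ or $\partial_\sigma\Phi$ retains a useful lower bound. In the extremal scenario $k_3\ll k\approx k_1\approx k_2$, we use IBP in $\sigma$ via $|\partial_\sigma\Phi|\gtrsim|\sigma|^\alpha\approx 2^{\alpha k_3}$, since $|\sigma|\ll|\xi-\eta-\sigma|$.

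After IBP (say in $\sigma$) three pieces $J_1,F_1,G_1$ arise (derivative on $m_5$, on $\widehat{f_{k_1}}$, or on $\widehat{f_{k_3}}$), each estimated by Lemma \ref{symbol estimate} with Hölder triple $L^\infty(u_{k_1})\cdot L^2(u_{k_2})\cdot L^2(u_{k_3})$: we exploit $\|u_{k_1}\|_{L^\infty}\lesssim\varepsilon_1 2^{-m/2}$ from dispersion and the $Z$-norm gain $\|u_{k_3}\|_{L^2}\leq 2^{k_3/2}\|\widehat{f_{k_3}}\|_{L^\infty}\lesssim\varepsilon_1 2^{k_3/2}$ for the low-frequency slot. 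The key new ingredient, crucial for $G_1$, is the sharpened estimate
\[
\|\partial\widehat{f_{k_3}}\|_{L^2}\lesssim\varepsilon_1\bigl(2^{p_0m}+2^{-k_3/2}\bigr),
\]
obtained by writing $\partial\widehat{f_{k_3}}=\psi_{k_3}'\widehat{f}+\psi_{k_3}\partial\widehat{f}$ and bounding $\|\psi_{k_3}'\widehat{f}\|_{L^2}\lesssim 2^{-k_3/2}\|\widehat{f}\|_{L^\infty}$ via the $Z$-norm. By Cauchy--Schwarz on a support of width $2^{k_3}$ this yields $\|\partial\widehat{f_{k_3}}\|_{L^1}\lesssim\varepsilon_1$ uniformly in $k_3$.

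The main obstacle is the exponent bookkeeping. The symbol bound $\|\mathcal{F}^{-1}(\partial_\sigma m_5)\|_{L^1}\lesssim 2^{-m}2^{-(\alpha+1)k_3}$ carries a loss of up to $2^{4(\alpha+1)m}$ (since $k_3\geq -4m$), which must be absorbed by the combined gains $2^{-m/2}$ (dispersion) and $2^{k_3/2}$ (low-frequency support). The hypothesis $k_3\leq-(1-20p_0)m/(2(\alpha+1))$ makes this balance tight but favorable by providing $-(\alpha+1)k_3\geq(1-20p_0)m/2$, while the restriction $p_0\leq -\alpha/100$ ensures that the cumulative $\varepsilon_1 2^{p_0m}$ losses coming from the energy-based norms $\|\widehat{f_{k_j}}\|_{L^2}$ are dominated by the gain factors proportional to $2^{-\alpha k_3}<1$.
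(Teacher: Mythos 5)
Your route is genuinely different from the paper's. For this proposition the paper abandons frequency--space integration by parts entirely: it integrates by parts in \emph{time}, using lower bounds on the phase $\Phi$ itself (e.g.\ $|\Phi|\gtrsim 2^{(\alpha+1)\mathrm{med}(k_1,k_2,k_3)}\gtrsim 2^{-(\frac12-20p_0)m}$ in the $(+,+,+)$ case), and in the hardest configuration $(+,+,-)$ with $k_1=\min$ and $|k-k_1|\leq 10$ it must first excise a neighbourhood of the resonant set $\eta+\sigma=0$ with a cutoff $\chi_{k,m}$ before the weakly elliptic bound \eqref{70} becomes available. You instead push the frequency-IBP strategy of the preceding proposition past the threshold $\min(k_1,k_2,k_3)=-\frac{(1-20p_0)m}{2(\alpha+1)}$, compensating with the $Z$-norm gain $\|\widehat{f_{k_{\min}}}\|_{L^2}\lesssim 2^{k_{\min}/2}$ and a sharpened bound on $\|\partial\widehat{f_l}\|_{L^2}$. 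Your preliminary reductions are sound: the correction terms do vanish when $\max_j|k_j-k|\geq 21$, and the stationary points of $\Phi$ (which sit at $|\xi-\eta-\sigma|=|\eta|=|\sigma|=|\xi|$) are excluded from the support, so a pointwise bound on $I_{k_1,k_2,k_3}$ is not hopeless. (Incidentally, your case ``$k_1\approx k_2\approx k_3$'' is vacuous here: \eqref{29} together with $p_0\leq-\alpha/100$ forces $\mathrm{med}-\min\gtrsim p_0m$, so the minimal frequency is always well separated and $|\partial_\eta\Phi|\gtrsim 2^{\alpha k_{\min}}$ holds on the whole support after a relabeling.)

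The gap is in the one place where the proposition has actual content, namely the exponent balance, and the single quantitative claim you make about it points the wrong way. You write that the hypothesis $k_3\leq-\frac{(1-20p_0)m}{2(\alpha+1)}$ is ``favorable by providing $-(\alpha+1)k_3\geq(1-20p_0)m/2$''. But $2^{-(\alpha+1)k_3}$ is the \emph{loss} in $\|\mathcal F^{-1}(\partial m_5)\|_{L^1}\lesssim 2^{-m}2^{-(\alpha+1)k_3}$; a lower bound on it cannot help you. What controls the loss is the opposite constraint, $k_{\min}\geq-(1+10p_0)m-\mathrm{med}\geq-(1+11p_0)m$ from \eqref{29}, and the balance you must verify is
\begin{equation*}
2^{k}\cdot 2^{-m}2^{-(\alpha+1)k_{\min}}\cdot 2^{k_{\min}/2}\cdot 2^{-m/2}\cdot 2^{p_0m}
= 2^{k}\,2^{-\frac32 m+p_0m}\,2^{-(\alpha+\frac12)k_{\min}}\ \lesssim\ 2^{-m-3p_0m}2^{-10k_+}.
\end{equation*}
For $\alpha\leq-\tfrac12$ the last factor is a gain, but for $\alpha\in(-\tfrac12,0)$ one needs $(\alpha+\tfrac12)(1+11p_0)\leq\tfrac12-Cp_0$, which holds only because $p_0\leq-\alpha/100$; this computation, not the phrase ``gain factors proportional to $2^{-\alpha k_3}<1$'' (that factor belongs to the undifferentiated symbol $m_5$ and is not where the danger lies), is the crux, and it is absent from your argument. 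A parallel verification is needed for the terms where $\partial_\eta$ falls on a profile, where your sharpened estimate $\|\partial\widehat{f_{k_{\min}}}\|_{L^2}\lesssim 2^{p_0m}+2^{-k_{\min}/2}$ is indeed essential (the paper's bound $2^{p_0m}2^{-k_{\min}}$ would not close). As written, the proof asserts that a tight balance is favorable without exhibiting it, and the only inequality offered in its support is reversed; you should either carry out the bookkeeping above in each of the three IBP terms, or follow the paper's time-integration route where the smallness is extracted from $|\Phi|$ rather than from $|\partial_\eta\Phi|$.
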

\begin{proof} It suffices to show 
	\begin{align*}
	\bigg|\int_{t_1}^{t_2}e^{\mathrm{i}H(s,\xi)}I_{k_1,k_2,k_3}^{\iota_1,\iota_2,\iota_3}(\xi,s)\,\diff s\bigg|
	\lesssim \varepsilon_1^32^{-2p_0m}2^{-10k_+}.
	\end{align*}
	
\bigskip
\noindent{\bf{Case 1: \((\iota_1,\iota_2,\iota_3)= (+,+,+)\).}} In this case,  
one may estimate
\begin{equation*}
\begin{aligned}
-\Phi(\xi,\eta,\sigma)&=(\xi-\eta-\sigma)^{\alpha+1}+\eta^{\alpha+1}+\sigma^{\alpha+1}-\xi^{\alpha+1}\\
&\gtrsim 2^{(\alpha+1)\mathrm{med}(k_1,k_2,k_3)}.
\end{aligned}
\end{equation*}
It follows from \eqref{29} and \eqref{58.5} that
\begin{equation*}
\begin{aligned}
\mathrm{med}(k_1,k_2,k_3)\geq -(1+10p_0)m+\frac{(1-20p_0)m}{2(\alpha+1)}.
\end{aligned}
\end{equation*}
Therefore
\begin{equation}\label{59}
\begin{aligned}
-\Phi(\xi,\eta,\sigma)\gtrsim 2^{-(1+10p_0)(\alpha+1)m+(\frac{1}{2}-10p_0)m}\gtrsim 2^{-(\frac{1}{2}-20p_0)m}，
\end{aligned}
\end{equation}
where we have used the assumption \(p_0\leq-\frac{\alpha}{100}\).

Due to \eqref{59}, we may integrate by parts in \(s\) as \eqref{43}-\eqref{46} and need to control the terms \(A_j(\xi,s),A_l^0(\xi,s),j=1,2;l=0,1,2,3\).
Using \eqref{59} and \eqref{44}, one calculates that
\begin{equation}\label{60}
\begin{aligned}
\|\mathcal{F}^{-1}(m_3)\|_{L^1}\lesssim 2^{(\frac{1}{2}-20p_0)m},
\end{aligned}
\end{equation}
and
\begin{equation}\label{61}
\begin{aligned}
\|\mathcal{F}^{-1}(m_4)\|_{L^1}\lesssim \epsilon_1^22^{(\frac{1}{2}-20p_0)m}2^{(2-\alpha)k}2^{-m}2^{-20k_+}.
\end{aligned}
\end{equation}
The symbol-type estimate \eqref{60} together with  Lemma \ref{symbol estimate} yields 
\begin{equation*}
\begin{aligned}
\sup_{s\in[t_1,t_2]}A_j(\xi,s_j)
&\lesssim \epsilon_1^32^k2^{(\frac{1}{2}-20p_0)m}2^{(2p_0-1/2)m}2^{-N_0({k_1}_+,{k_2}_+,{k_3}_+)}\\
&\lesssim \epsilon_1^32^{-5p_0m}2^{-10k_+},\quad \text{for}\ j=1,2,
\end{aligned}
\end{equation*}
and
\begin{equation*}
\begin{aligned}
\sup_{s\in[t_1,t_2]}A_1^0(\xi,s)
&\lesssim \epsilon_1^32^k2^{(\frac{1}{2}-20p_0)m}2^{(4p_0-3/2)m}2^{-20{k_1}_+}2^{-N_0({k_2}_+,{k_3}_+)}\\
&\lesssim \epsilon_1^52^{-m}2^{-5p_0m}2^{-10k_+},
\end{aligned}
\end{equation*}
and the same bound for \(A_2^0(\xi,s)\) and \(A_3^0(\xi,s)\).
Applying Lemma \ref{symbol estimate} with \eqref{61}, we may estimate
\begin{equation*}
\begin{aligned}
\sup_{s\in[t_1,t_2]}A_0^0(\xi,s)
&\lesssim \epsilon_1^32^{(3-\alpha)k}2^{(\frac{1}{2}-20p_0)m}2^{(2p_0-3/2)m}2^{-N_0({k_1}_+,{k_2}_+,{k_3}_+)}2^{-20k_+}\\
&\lesssim \epsilon_1^52^{-m}2^{-5p_0m}2^{-10k_+}.
\end{aligned}
\end{equation*}

\bigskip
\noindent{\bf{Case 2: \((\iota_1,\iota_2,\iota_3)\in \{(+,-,-),(-,+,-),(-,-,+)\}\).}} Since they are similar, we only analyze the case \((\iota_1,\iota_2,\iota_3)= (+,-,-)\). 
In this case, we have 
\begin{equation*}
\begin{aligned}
\Phi(\xi,\eta,\sigma)
&=\xi^{\alpha+1}+(-\eta)^{\alpha+1}+(-\sigma)^{\alpha+1}-(\xi-\eta-\sigma)^{\alpha+1}\\
&\gtrsim 2^{(\alpha+1)\mathrm{med}(k,k_2,k_3)}.
\end{aligned}
\end{equation*}
There are three sub-cases to consider:\\
{\bf{(i)}} If \(\mathrm{med}(k,k_2,k_3)=k\), then 
\begin{equation*}
\begin{aligned}
\Phi(\xi,\eta,\sigma)\gtrsim 2^{(\alpha+1)k}.
\end{aligned}
\end{equation*}
{\bf{(ii)}} If \(\mathrm{med}(k,k_2,k_3)=k_2\), then \(k\leq k_2\leq k_3 \) or \(k_3\leq k_2\leq k\).\\ 
When \(k\leq k_2\leq k_3 \), one has 
\begin{equation*}
\begin{aligned}
\Phi(\xi,\eta,\sigma)\gtrsim 2^{(\alpha+1)k_2}\geq 2^{(\alpha+1)k}.
\end{aligned}
\end{equation*}
For \(k_3\leq k_2\leq k\), it holds 
\begin{equation*}
\begin{aligned}
\Phi(\xi,\eta,\sigma)\gtrsim 2^{(\alpha+1)k_2}\geq 2^{(\alpha+1)\mathrm{med}(k_1,k_2,k_3)}.
\end{aligned}
\end{equation*}
{\bf{(iii)}} If \(\mathrm{med}(k,k_2,k_3)=k_3\), then,  similarly to (ii), one may show \(\Phi\) enjoys the same bounds as (ii).

We conclude from (i)-(iii) that
\begin{equation}\label{62}
\begin{aligned}
\Phi(\xi,\eta,\sigma)\gtrsim 2^{(\alpha+1)k},
\end{aligned}
\end{equation}
or
\begin{equation}\label{63}
\begin{aligned}
\Phi(\xi,\eta,\sigma)\gtrsim 2^{(\alpha+1)\mathrm{med}(k_1,k_2,k_3)}\gtrsim 2^{-(\frac{1}{2}-20p_0)m}.
\end{aligned}
\end{equation}

For the case of \eqref{62}, the Phase \(\Phi\) enjoys the same bound as \eqref{42}, thus the terms \(A_j(\xi,s_j),A_l^0(\xi,s),j=1,2;l=0,1,2,3\) can be estimated as it in Lemma \ref{le:3}. 
The latter case \eqref{63} can be handled identically to {\bf{Case 1}}.

\bigskip
\noindent{\bf{Case 3: \((\iota_1,\iota_2,\iota_3)\in \{(+,+,-),(+,-,+),(-,+,+)\}\).}} We only consider the case \((\iota_1,\iota_2,\iota_3)=(+,+,-)\), and the other cases may be handled in a similar fashion.
In this case, one has 
\begin{equation*}
\begin{aligned}
\Phi(\xi,\eta,\sigma)=\xi^{\alpha+1}-(\xi-\eta-\sigma)^{\alpha+1}-\eta^{\alpha+1}+(-\sigma)^{\alpha+1}.
\end{aligned}
\end{equation*}

We shall divide it into two sub-cases.\\
{\bf{(i) \(k_3=\min(k_1,k_2,k_3)\).}} Recalling the assumption
\(
k_1,k_2,k_3\in[-4m,p_0m/10],
\)
we have 
\[
k_3\in \bigg[-(1+20p_0)m,-\frac{(1-20p_0)m}{2(\alpha+1)}\bigg],\quad k_1,k_2\in \bigg[-\frac{(1-40p_0)m}{2(\alpha+1)},\frac{p_0m}{10}\bigg].
\]
Notice that 
\begin{align*}
a^{\alpha+1}+b^{\alpha+1}-(a+b)^{\alpha+1}\gtrsim b^{\alpha+1},
\end{align*}
if \(a\geq b\in(0,\infty)\). 
We then may estimate 
\begin{equation}\label{66}
\begin{aligned}
-\Phi(\xi,\eta,\sigma)
&=\big(-\xi^{\alpha+1}+(\xi-\eta-\sigma)^{\alpha+1}
+(\eta+\sigma)^{\alpha+1}\big)\\
&\quad+\big(\eta^{\alpha+1}-(\eta+\sigma)^{\alpha+1}\big)
-(-\sigma)^{\alpha+1}\\
&\gtrsim 2^{(\alpha+1)\min(k_1,k_2)}\geq 2^{-(\frac{1}{2}-20p_0)m},
\end{aligned}
\end{equation}
provided that \(|\xi-\eta-\sigma|\in [2^{k_1-2},2^{k_1+2}]\), \(|\eta|\in [2^{k_2-2},2^{k_2+2}]\) and \(|\sigma|\in [2^{k_3-2},2^{k_3+2}]\).

Proceed as \eqref{43}-\eqref{46} by integration by parts in \(s\). It follows from \eqref{66} and \eqref{44} that
\begin{equation}\label{67}
\begin{aligned}
\|\mathcal{F}^{-1}(m_3)\|_{L^1}\lesssim 2^{(\frac{1}{2}-20p_0)m},
\end{aligned}
\end{equation}
and
\begin{equation}\label{68}
\begin{aligned}
\|\mathcal{F}^{-1}(m_4)\|_{L^1}\lesssim \epsilon_1^22^{(2-\alpha)k}2^{(\frac{1}{2}-20p_0)m}2^{-m}2^{-20k_+}.
\end{aligned}
\end{equation}
With the symbol-type bounds \eqref{67} and \eqref{68} at hand, repeating the argument of {\bf{Case 1}}, one may estimate
\begin{equation*}
\begin{aligned}
\sup_{s\in[t_1,t_2]}A_j(\xi,s)
\lesssim \epsilon_1^32^{-5p_0m}2^{-10k_+},\quad \text{for}\ j=1,2,
\end{aligned}
\end{equation*}
and
\begin{equation*}
\begin{aligned}
\sup_{s\in[t_1,t_2]}\big(A_0^0(\xi,s)+A_1^0(\xi,s)+A_2^0(\xi,s)+A_3^0(\xi,s)\big)\lesssim \epsilon_1^32^{-m}2^{-5p_0m}2^{-10k_+}.
\end{aligned}
\end{equation*}

\bigskip
\noindent {\bf{(ii) \(k_3\neq \min(k_1,k_2,k_3)\).}} By symmetry we may assume 
\(k_1=\min(k_1,k_2,k_3)\). 
In this case, we have 
\[
k_1\in \bigg[-(1+20p_0)m,-\frac{(1-20p_0)m}{2(\alpha+1)}\bigg],\quad k_2,k_3\in \bigg[-\frac{(1-40p_0)m}{2(\alpha+1)},\frac{p_0m}{10}\bigg].
\]

Define 
\begin{equation*}
\chi_{k,m}(\eta,\sigma)=
\left\{ 
\begin{aligned}
&1,&& \text{if}\ |k-k_1|\geq 11,\\
&1-\varphi\big(2^{(1+20p_0)m}(\eta+\sigma)\big),&& \text{if}\ |k-k_1|\leq 10.
\end{aligned}
\right.
\end{equation*}
We first observe that
\begin{equation*}
\begin{aligned}
&\quad\bigg|\xi\int_{t_1}^{t_2}\int_{\R^2}\big(1-\chi_{k,m}(\eta,\sigma)\big)e^{\mathrm{i}H(\xi,s)}e^{\mathrm{i}s\Phi(\xi,\eta,\sigma)}\widehat{f_{k_1}^+}(\xi-\eta-\sigma,s)\\
&\quad\quad\quad\quad\quad\quad\quad\quad\quad\quad\quad\quad\quad\quad\quad\times\widehat{f_{k_2}^+}(\eta,s)\widehat{f_{k_3}^-}(\sigma,s)\,\diff\eta \diff \sigma\diff s\bigg|\\
&\leq 2^m2^k\bigg|\int_{\R^2}\varphi\big(2^{(1+20p_0)m}(\eta+\sigma)\big)e^{\mathrm{i}s\Phi(\xi,\eta,\sigma)}\widehat{f_{k_1}^+}(\xi-\eta-\sigma,s)\\
&\quad\quad\quad\quad\quad\quad\quad\quad\quad\quad\quad\quad\quad\quad\quad\times\widehat{f_{k_2}^+}(\eta,s)\widehat{f_{k_3}^-}(\sigma,s)\,\diff\eta \diff \sigma\bigg|\\
&\lesssim \epsilon_1^32^m2^k2^{-(\frac{1}{2}+10p_0)m}2^{-m/2}2^{p_0m}2^{-10k_+}\\
&\lesssim\epsilon_1^32^{-5p_0m}2^{-10k_+}.
\end{aligned}
\end{equation*}
To complete the proof it remains to show
\begin{equation}\label{69}
\begin{aligned}
&\quad\bigg|\xi\int_{t_1}^{t_2}\int_{\R^2}\chi_{k,m}(\eta,\sigma)e^{\mathrm{i}H(\xi,s)}e^{\mathrm{i}s\Phi(\xi,\eta,\sigma)}\widehat{f_{k_1}^+}(\xi-\eta-\sigma,s)\\
&\quad\quad\quad\quad\quad\quad\quad\quad\quad\quad\quad\quad\quad\quad\quad\times\widehat{f_{k_2}^+}(\eta,s)\widehat{f_{k_3}^-}(\sigma,s)\,\diff\eta \diff \sigma\diff s\bigg|\\
&\lesssim\epsilon_1^32^{-5p_0m}2^{-10k_+}.
\end{aligned}
\end{equation}

The crucial ingredient in showing \eqref{69} 
is that the phase \(\Phi\) satisfies the following weakly elliptic bound
\begin{equation}\label{70}
|\Phi(\xi,\eta,\sigma)|\geq \lambda:=
\left\{ 
\begin{aligned}
&2^{(\alpha+1)k_1-100},&&\quad \text{if}\ k\leq k_1-11,\\
&2^{(\alpha+1)\min(k,k_2,k_3)-100},&&\quad \text{if}\ k\geq k_1+11,\\
&2^{-(1+20p_0)m}2^{\alpha k_1-100},&&\quad \text{if}\ |k-k_1|\leq 10,
\end{aligned}
\right.
\end{equation}
provided that \(|\xi-\eta-\sigma|\in [2^{k_1-2},2^{k_1+2}]\), \(|\eta|\in [2^{k_2-2},2^{k_2+2}]\), \(|\sigma|\in [2^{k_3-2},2^{k_3+2}]\) and \(\chi_{k,m}(\eta,\sigma)\neq 0\). We now show \eqref{70} as follows:

If \(k\leq k_1-11\), we then estimate 
\begin{equation*}
\begin{aligned}
-\Phi(\xi,\eta,\sigma)
&\geq -\xi^{\alpha+1}+(\xi-\eta-\sigma)^{\alpha+1}-|\eta^{\alpha+1}-(-\sigma)^{\alpha+1}|\\
&\geq -2^{(\alpha+1)k+1}+2^{(\alpha+1)k_1-1}-2^{k_1+\alpha k_2+10}\\
&\geq 2^{(\alpha+1)k_1-2}.
\end{aligned}
\end{equation*}

If \(k\geq k_1+11\), we then deduce
\begin{equation*}
\begin{aligned}
\quad\Phi(\xi,\eta,\sigma)
&\geq \big(\xi^{\alpha+1}+(-\sigma)^{\alpha+1}-(\xi-\sigma)^{\alpha+1}\big)-(\xi-\eta-\sigma)^{\alpha+1}\\
&\quad-|\eta^{\alpha+1}-(\xi-\sigma)^{\alpha+1}|\\
&\geq 2^{(\alpha+1)\min(k,k_3)-1}-2^{(\alpha+1)k_1+1}-2^{k_1+\alpha k_2+10}\\
&\geq 2^{(\alpha+1)\min(k,k_3)-2}.
\end{aligned}
\end{equation*}

If \(|k-k_1|\leq 10\), first notice \(2^{-(1+20p_0)m}\leq |\eta+\sigma|\leq 2^{k_1+11}\), then one has
\begin{equation*}
\begin{aligned}
|\Phi(\xi,\eta,\sigma)|&=(\alpha+1)\big|\big(\mu\xi+(1-\mu)(\xi-\eta-\sigma)\big)^{\alpha}-\big(\theta\eta+(1-\theta)(-\sigma)\big)^{\alpha}\big||\eta+\sigma|\\
&\gtrsim 2^{\alpha k_1}2^{-(1+20p_0)m},
\end{aligned}
\end{equation*}
for some \(\mu,\theta\in (0,1)\).

To show \eqref{69} in view of \eqref{70}, 
we integrate by parts in \(s\) to get
\begin{equation*}
\begin{aligned}
&\quad\bigg|\xi\int_{t_1}^{t_2}\int_{\R^2}\chi_{k,m}(\eta,\sigma)e^{\mathrm{i}H(\xi,s)}e^{\mathrm{i}s\Phi(\xi,\eta,\sigma)}\widehat{f_{k_1}^+}(\xi-\eta-\sigma,s)\\
&\quad\quad\quad\quad\quad\quad\quad\quad\quad\quad\quad\quad\quad\quad\quad\times\widehat{f_{k_2}^+}(\eta,s)\widehat{f_{k_3}^-}(\sigma,s)
\,\diff\eta \diff \sigma\diff s\bigg|\\
&\lesssim  \int_{t_1}^{t_2}\bigg|\xi\int_{\R^2}\chi_{k,m}(\eta,\sigma)\frac{e^{\mathrm{i}s\Phi(\xi,\eta,\sigma)}}{\Phi(\xi,\eta,\sigma)}\frac{\diff}{\diff s}\bigg[e^{\mathrm{i}H(\xi,s)}\widehat{f_{k_1}^+}(\xi-\eta-\sigma,s)\\
&\quad\quad\quad\quad\quad\quad\quad\quad\quad\quad\quad\quad\quad\quad\quad\times\widehat{f_{k_2}^+}(\eta,s)\widehat{f_{k_3}^-}(\sigma,s)\bigg]\,\diff\eta \diff \sigma \bigg|\diff s\\
&\quad+ \sum_{j=1}^2\bigg|\xi\int_{\R^2}\chi_{k,m}(\eta,\sigma)\frac{e^{\mathrm{i}H(\xi,s_j)}e^{\mathrm{i}s_j\Phi(\xi,\eta,\sigma)}}{\Phi(\xi,\eta,\sigma)}\widehat{f_{k_1}^+}(\xi-\eta-\sigma,s_j)\\
&\quad\quad\quad\quad\quad\quad\quad\quad\quad\quad\quad\quad\quad\quad\quad\times\widehat{f_{k_2}^+}(\eta,s_j)\widehat{f_{k_3}^-}(\sigma,s_j)\,\diff\eta \diff \sigma\bigg|\\
&=\colon B^0(\xi,s)+\sum_{j=1}^2B_j(\xi,s_j), 
\end{aligned}
\end{equation*} 
and the term \(B^0(\xi,s)\) may be bounded by
\begin{equation*}
\begin{aligned}
B^0(\xi,s)\lesssim 2^m\sup_{s\in[t_1,t_2]}\big(B_0^0(\xi,s)+B_1^0(\xi,s)+B_2^0(\xi,s)+B_3^0(\xi,s)\big),
\end{aligned}
\end{equation*}
in which each term is defined by
\begin{equation*}
\begin{aligned}
B_0^0(\xi,s)&=\bigg|\xi\int_{\R^2}m_7(\eta,\sigma)e^{\mathrm{i}s\Phi(\xi,\eta,\sigma)}\widehat{f_{k_1}^+}(\xi-\eta-\sigma,s)\widehat{f_{k_2}^+}(\eta,s)\widehat{f_{k_3}^-}(\sigma,s)\,\diff\eta \diff \sigma\bigg|,\\
B_1^0(\xi,s)&= \bigg|\xi\int_{\R^2}m_6(\eta,\sigma)e^{\mathrm{i}s\Phi(\xi,\eta,\sigma)}\partial_s\widehat{f_{k_1}^+}(\xi-\eta-\sigma,s)\widehat{f_{k_2}^+}(\eta,s)\widehat{f_{k_3}^-}(\sigma,s)\,\diff\eta \diff \sigma\bigg|,\\
B_2^0(\xi,s)&= \bigg|\xi\int_{\R^2}m_6(\eta,\sigma)e^{\mathrm{i}s\Phi(\xi,\eta,\sigma)}\widehat{f_{k_1}^+}(\xi-\eta-\sigma,s)\partial_s\widehat{f_{k_2}^+}(\eta,s)\widehat{f_{k_3}^-}(\sigma,s)\,\diff\eta \diff \sigma\bigg|,\\
B_3^0(\xi,s)&= \bigg|\xi\int_{\R^2}m_6(\eta,\sigma)e^{\mathrm{i}s\Phi(\xi,\eta,\sigma)}\widehat{f_{k_1}^+}(\xi-\eta-\sigma,s)\widehat{f_{k_2}^+}(\eta,s)\partial_s\widehat{f_{k_3}^-}(\sigma,s)\,\diff\eta \diff \sigma\bigg|,
\end{aligned}
\end{equation*} 
with
\begin{equation}\label{71}
\begin{aligned}
m_6(\eta,\sigma):=\frac{\chi_{k,m}(\eta,\sigma)}{\Phi(\xi,\eta,\sigma)}\varphi_{k_1}^\prime(\xi-\eta-\sigma)\varphi_{k_2}^\prime(\eta)\varphi_{k_3}^\prime(\sigma),
\end{aligned}
\end{equation} 
and
\begin{equation}\label{72}
\begin{aligned}
m_7(\eta,\sigma):=\chi_{k,m}(\eta,\sigma)\partial_s H(\xi,s)m_6(\eta,\sigma).
\end{aligned}
\end{equation}

It follows from \eqref{70} and \eqref{71} that
\begin{equation}\label{73}
\begin{aligned}
\|\mathcal{F}^{-1}(m_6)\|_{L^1}\lesssim \lambda^{-1},
\end{aligned}
\end{equation}
and from \eqref{70} and \eqref{72} that
\begin{equation}\label{74}
\begin{aligned}
\|\mathcal{F}^{-1}(m_7)\|_{L^1}\lesssim \epsilon_1^2\lambda^{-1}2^{(2-\alpha)k}2^{-m}2^{-20k_+}.
\end{aligned}
\end{equation}
Applying Lemma \ref{symbol estimate} with \eqref{73} yields
\begin{equation*}
\begin{aligned}
\sup_{s\in[t_1,t_2]}B_j(\xi,s_j)
\lesssim \epsilon_1^3\lambda^{-1}2^k2^{(2p_0-1/2)m}2^{-10k_+},\quad \text{for}\ j=1,2.
\end{aligned}
\end{equation*}
So we need to estimate \(\lambda^{-1}2^k\). If \(k\leq k_1-11\), then
\begin{equation*}
\begin{aligned}
\lambda^{-1}2^k
\lesssim 2^{-(\alpha+1)k_1}2^k
\lesssim 2^{-(\alpha+1)k}2^k
\lesssim 2^{-\alpha p_0m}.
\end{aligned}
\end{equation*}
If \(k\geq k_1+11\), then when \(k\leq\min(k_2,k_3)\)
\begin{equation*}
\begin{aligned}
\lambda^{-1}2^k
\lesssim 2^{-(\alpha+1)k}2^k
\lesssim 2^{-\alpha p_0m},
\end{aligned}
\end{equation*}
and when \(k\geq\min(k_2,k_3)\)
\begin{equation*}
\begin{aligned}
\lambda^{-1}2^k
\lesssim 2^{-(\alpha+1)\min(k_2,k_3)}2^k
\lesssim 2^{(\frac{1}{2}-20p_0)m}2^{p_0m}.
\end{aligned}
\end{equation*} 
If \(|k-k_1|\leq 10\), then
\begin{equation*}
\begin{aligned}
\lambda^{-1}2^k
\lesssim 2^{(1+20p_0)m}2^{(1-\alpha)k_1}
\lesssim 2^{(1+20p_0)m}2^{-\frac{(1-\alpha)(1-20p_0)m}{2(\alpha+1)}}
\lesssim 2^{(\frac{1}{2}-20p_0)m},
\end{aligned}
\end{equation*}
where we have used the assumption \(p_0\leq-\frac{\alpha}{100}\). 
We finally conclude
\begin{equation*}
\begin{aligned}
\sup_{s\in[t_1,t_2]}B_j(\xi,s_j)\lesssim \epsilon_1^32^{-5p_0m}2^{-10k_+},\quad \text{for}\ j=1,2.
\end{aligned}
\end{equation*}
Similarly, we obtain 
\begin{equation}
\begin{aligned}
\sup_{s\in[t_1,t_2]}\big(B_1^0(\xi,s)+B_2^0(\xi,s)+B_3^0(\xi,s)\big)
&\lesssim \epsilon_1^3\lambda^{-1}2^k2^{(4p_0-3/2)m}2^{-10k_+}\\
&\lesssim \epsilon_1^32^{-m}2^{-5p_0m}2^{-10k_+}.
\end{aligned}
\end{equation}
We apply Lemma \ref{symbol estimate} instead of using \eqref{74} to obtain
\begin{equation*}
\begin{aligned}
\sup_{s\in[t_1,t_2]}B_0^0(\xi,s)
&\lesssim \epsilon_1^5\lambda^{-1}2^{(3-\alpha)k}2^{(2p_0-3/2)m}2^{-10k_+}\\
&\lesssim \epsilon_1^52^{-m}2^{-5p_0m}2^{-10k_+}.
\end{aligned}
\end{equation*}

\end{proof}

\section{Concluding remarks}

We conclude this paper by some related and open questions.  Some previous results on long time behavior or finite time blow-up for solutions to fractional KdV type equations concerned equations with quadratic nonlinearities, namely the fractional KdV equation (fKdV)

\begin{align}\label{fKdV}
	\partial_t u-|D|^{\alpha} \partial_x u=-u\partial_x u, \quad -1<\alpha<0,
\end{align}
or the Whitham equation
\begin{equation}\label {Whit}
	\partial_t u+\mathcal L \partial_xu=-u\partial_x u.
\end{equation}

It was proven in \cite{EW} that the lifespan of solutions to \eqref{fKdV} with $-1<\alpha<1, \; \alpha \neq 0$ and with initial data of size $O(\epsilon)$ in $H^N(\R), N\geq 3$ is $O(1/\epsilon^2).$

This result is specially striking when $-1<\alpha<0$ since for $\alpha>0$ one expects global existence of small solutions. Note that for the inviscid Burgers equation this lifespan is $O(1/\epsilon).$ 

Proving a global existence result of small solutions of the  fKdV equation in the range $-1<\alpha<0$ or for the Whitham equation (and for the modified Whitham equation) is a challenging open question. 

On the other hand, a finite time blow-up by shock formation was observed via numerical approach and conjectured in \cite{MR3317254, KLPS}. This has been   now  proven in \cite{HT,Hur} both for the fKdV equation when $-1<\alpha<-\frac{1}{3}$ and for the Whitham equation.


\section{Appendix}
For the reader's  convenience, we list two technical lemmas proven in \cite{MR3552008} and \cite{MR3121725} respectively. 
The first one is the following interpolation inequality:
\begin{lemma}[\cite{MR3552008}]\label{interpolation} It holds that
	\begin{align}\label{75}
	\|\widehat{P_kg}\|_{L^\infty}^2\lesssim \left\|P_kg\right\|_{L^1}^2\lesssim 2^{-k}\|\widehat{P_kg}\|_{L^2}(\|\widehat{P_kg}\|_{L^2}+2^k\|\partial\widehat{P_kg}\|_{L^2}).
	\end{align}
\end{lemma}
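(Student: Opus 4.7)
The plan is to establish the two inequalities separately and then combine them.

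The first inequality $\|\widehat{P_kg}\|_{L^\infty}^2 \lesssim \|P_kg\|_{L^1}^2$ is the trivial Hausdorff--Young bound: from the definition of the Fourier transform, $|\widehat{P_kg}(\xi)| \leq (2\pi)^{-1/2}\|P_kg\|_{L^1}$ pointwise, and squaring yields the claim.

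For the second inequality, the strategy is a weighted $L^2$ Cauchy--Schwarz with an optimizable parameter $A>0$. Writing $\|P_kg\|_{L^1} = \int |P_kg(x)|\,\langle x/A\rangle \cdot \langle x/A\rangle^{-1}\,dx$ and using Cauchy--Schwarz, together with the scaling identity $\int \langle x/A\rangle^{-2}\,dx = \pi A$, gives
\[
\|P_kg\|_{L^1}^2 \lesssim A\,\|P_kg\|_{L^2}^2 + A^{-1}\|xP_kg\|_{L^2}^2.
\]
By Plancherel and the identity $\widehat{xP_kg}(\xi) = \mathrm{i}\,\partial\widehat{P_kg}(\xi)$, this becomes
\[
\|P_kg\|_{L^1}^2 \lesssim A\,\|\widehat{P_kg}\|_{L^2}^2 + A^{-1}\|\partial\widehat{P_kg}\|_{L^2}^2 \qquad \text{for every } A>0.
\]

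The last step is to choose $A$ wisely. Set $A_* := \|\partial\widehat{P_kg}\|_{L^2}/\|\widehat{P_kg}\|_{L^2}$, the unconstrained minimizer, and take $A = \max(2^{-k},A_*)$. If $A_*\geq 2^{-k}$, then $A=A_*$ and both terms equal $\|\widehat{P_kg}\|_{L^2}\|\partial\widehat{P_kg}\|_{L^2}$. If $A_*<2^{-k}$, then $A=2^{-k}$, the first term is $2^{-k}\|\widehat{P_kg}\|_{L^2}^2$, and the second satisfies
\[
2^k\|\partial\widehat{P_kg}\|_{L^2}^2 \leq 2^k\bigl(2^{-k}\|\widehat{P_kg}\|_{L^2}\bigr)\|\partial\widehat{P_kg}\|_{L^2} = \|\widehat{P_kg}\|_{L^2}\|\partial\widehat{P_kg}\|_{L^2}.
\]
In either case,
\[
\|P_kg\|_{L^1}^2 \lesssim 2^{-k}\|\widehat{P_kg}\|_{L^2}^2 + \|\widehat{P_kg}\|_{L^2}\|\partial\widehat{P_kg}\|_{L^2} = 2^{-k}\|\widehat{P_kg}\|_{L^2}\bigl(\|\widehat{P_kg}\|_{L^2}+2^k\|\partial\widehat{P_kg}\|_{L^2}\bigr),
\]
which is the desired bound.

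There is no substantive obstacle: the only conceptual point is to recognize that the frequency localization scale $2^{-k}$ gives the natural lower cutoff for the weight parameter $A$, which forces the appearance of the additive $2^{-k}\|\widehat{P_kg}\|_{L^2}^2$ term when the optimal $A_*$ is too small for the interpolated bound $\|\widehat{P_kg}\|_{L^2}\|\partial\widehat{P_kg}\|_{L^2}$ to be useful on its own.
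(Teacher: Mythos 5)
Your proof is correct. The paper does not actually prove this lemma but quotes it from the cited reference, where the argument is the same physical-space weighted Cauchy--Schwarz (split or weight at a scale $A$, use $\|xP_kg\|_{L^2}=\|\partial\widehat{P_kg}\|_{L^2}$, and optimize the scale against the lower cutoff $2^{-k}$), so your route is essentially the standard one and every step checks out.
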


\bigskip
The other one is the bound on pseudo-product operators satisfying certain strong integrability conditions:
\begin{lemma}[\cite{MR3121725}]\label{symbol estimate}
	Assume that \(L^1(\R\times\R)\) satisfies
	\begin{align*}
	\left\|\int_{\R^2}m(\eta,\sigma)e^{\mathrm{i}x\eta}e^{\mathrm{i}y\sigma}\,\diff \eta\diff \sigma\right\|_{L^1_{x,y}}\lesssim A,
	\end{align*} 
	for some \(A\in(0,\infty)\). Then for any \((p,q,r)\in\{(2,2,\infty),(2,\infty,2),(\infty,2,2)\}\),
	\begin{align*}
	\left|\int_{\R^2}m(\eta,\sigma)\widehat{f}(\eta)\widehat{g}(\sigma)\widehat{h}(-\eta-\sigma)\,\diff \eta\diff \sigma\right|\lesssim A\|f\|_{L^p}\|g\|_{L^q}\|h\|_{L^r}.
	\end{align*}
\end{lemma}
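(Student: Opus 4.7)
The plan is to reduce the trilinear form to a Hölder estimate by inverting the Fourier transform of the symbol. The hypothesis says precisely that the inverse Fourier transform of $m$ lies in $L^1(\R^2)$ with norm $\lesssim A$. So I would first set
\[
M(x,y):=\int_{\R^2} m(\eta,\sigma)\, e^{\mathrm{i}x\eta}e^{\mathrm{i}y\sigma}\,\diff\eta\,\diff\sigma,\qquad \|M\|_{L^1_{x,y}}\lesssim A,
\]
and recover $m$ by Fourier inversion (up to a harmless $(2\pi)^{-2}$ factor) as
\[
m(\eta,\sigma)=\frac{1}{(2\pi)^2}\int_{\R^2} M(x,y)\,e^{-\mathrm{i}x\eta}e^{-\mathrm{i}y\sigma}\,\diff x\,\diff y.
\]

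Next I would substitute this into the trilinear integral and apply Fubini to pull the $(x,y)$-integration outside. The inner $(\eta,\sigma)$-integral then becomes
\[
\iint_{\R^2} e^{-\mathrm{i}x\eta}\widehat f(\eta)\,e^{-\mathrm{i}y\sigma}\widehat g(\sigma)\,\widehat h(-\eta-\sigma)\,\diff\eta\,\diff\sigma.
\]
Using the paper's convention, $e^{-\mathrm{i}x\eta}\widehat f(\eta)$ is (up to a constant) the Fourier transform of a translation of $f$; equivalently, by the trilinear Plancherel identity
\[
\iint \widehat F(\eta)\widehat G(\sigma)\widehat H(-\eta-\sigma)\,\diff\eta\,\diff\sigma = c\int_{\R} F(z)G(z)H(z)\,\diff z,
\]
the inner integral equals $c\int_{\R} f(z-x)\,g(z-y)\,h(z)\,\diff z$ for a fixed constant $c$.

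With this simplification the whole trilinear form becomes
\[
\iint_{\R^2} M(x,y)\left(\int_{\R} f(z-x)\,g(z-y)\,h(z)\,\diff z\right)\diff x\,\diff y
\]
up to a fixed constant. For each $(x,y)$, I apply Hölder in $z$ with exponents $(p,q,r)$ (each of the three allowed triples satisfies $1/p+1/q+1/r=1$) and use translation invariance of $L^p$ to get $\|f\|_{L^p}\|g\|_{L^q}\|h\|_{L^r}$. The outer $(x,y)$-integral then contributes $\|M\|_{L^1}\lesssim A$, yielding the claimed bound.

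There is no real obstacle; the only care needed is to justify Fubini and the Fourier inversion formula, which one handles by first assuming $m\in\mathcal S(\R^2)$ so that all the interchanges are absolute, and then extending to general $m$ by density once the bilinear/trilinear estimate is known. The key conceptual point — and the reason the three Hölder triples all work — is that once the symbol is represented by its $L^1$ kernel $M$, the trilinear operator reduces to an average of translation-composed Hölder trilinear forms, which are insensitive to which factor is placed in $L^\infty$.
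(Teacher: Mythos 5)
Your argument is correct and is precisely the standard proof of this pseudo-product bound: the paper itself does not prove the lemma but defers to \cite{MR3121725}, where the same kernel-representation argument (write $m$ as the Fourier transform of its $L^1$ kernel $M$, reduce the inner integral to $\int f(z-x)g(z-y)h(z)\,\diff z$ by the trilinear Plancherel identity, apply H\"older with $1/p+1/q+1/r=1$, and integrate $|M|$) is used. The only point requiring care, which you correctly flag, is justifying the inversion and Fubini steps for general $m$ via a density or limiting argument.
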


\vspace{0.5cm}
\noindent {\bf Acknowledgments.} The work of both authors was partially  supported by the ANR project ANuI ( ANR-17-CE40-0035-02).

\end{document}